\DeclareMathAlphabet{\mathscr}{OT1}{pzc}{m}{it} 
\numberwithin{equation}{section}
\setlist{labelindent=\parindent,leftmargin=*}
\newtheorem{theorem}{Theorem}[section]
\newtheorem{notation}[theorem]{Notation}
\newtheorem{lemma}[theorem]{Lemma}
\newtheorem{proposition}[theorem]{Proposition}
\newtheorem{corollary}[theorem]{Corollary}
\newtheorem{definition}[theorem]{Definition}
\newtheorem{hypothesis}[theorem]{Hypothesis}
\newtheorem{remark}[theorem]{Remark}
\newenvironment{prooff}[1]{\begin{trivlist}
\item {\it \bf Proof}\quad} {\qed\end{trivlist}}
\newsavebox\myboxA
\newsavebox\myboxB
\newlength\mylenA
\newcommand*\xoverline[2][0.75]{%
    \sbox{\myboxA}{$\m@th#2$}%
    \setbox\myboxB\null
    \ht\myboxB=\ht\myboxA%
    \dp\myboxB=\dp\myboxA%
    \wd\myboxB=#1\wd\myboxA
    \sbox\myboxB{$\m@th\overline{\copy\myboxB}$}
    \setlength\mylenA{\the\wd\myboxA}
    \addtolength\mylenA{-\the\wd\myboxB}%
    \ifdim\wd\myboxB<\wd\myboxA%
       \rlap{\hskip 0.5\mylenA\usebox\myboxB}{\usebox\myboxA}%
    \else
        \hskip -0.5\mylenA\rlap{\usebox\myboxA}{\hskip 0.5\mylenA\usebox\myboxB}%
    \fi}
\title{Martingale driven BSDEs, PDEs
and other related deterministic problems}
\author{
Adrien BARRASSO 
\thanks{Université d'Évry Val d'Essonne \\
  Laboratoire de Mathématiques et Modélisation,
23 Bd. de France, 
91037 Évry Cedex,  
F-91128 Palaiseau, France.
 E-mail: \sf adrien.barrasso@univ-evry.fr}    
\qquad\quad
Francesco RUSSO\thanks{ENSTA Paris, Institut Polytechnique de Paris, 
Unit\'e de Math\'ematiques 
appliqu\'ees, 828, boulevard des Mar\'echaux, F-91120 Palaiseau,
 France. E-mail: \sf francesco.russo@ensta-paris.fr}}
\date{November 24th 2020}
\begin{document}
\maketitle

{\bf Abstract.}
We focus on a class of BSDEs driven by a c\`adl\`ag martingale
and the corresponding Markovian BSDEs which arise when
 the randomness of the driver appears through a Markov process.
To those BSDEs we associate a deterministic equation 
which, when the Markov process is  a Brownian diffusion,
is  nothing else but a parabolic semi-linear PDE. We prove existence and uniqueness of a  {\it decoupled mild solution} of the deterministic problem, and give a probabilistic representation of this solution through the aforementioned BSDEs.

\bigskip
{\bf MSC 2010} Classification. 
60H30; 60H10; 35S05; 60J35; 60J60; 60J75.

\bigskip
{\bf KEY WORDS AND PHRASES.} Decoupled mild solutions; Martingale problem;
 c\`adl\`ag martingale;  pseudo-PDE;  Markov processes; backward stochastic differential equation.

\section{Introduction}

In the Brownian context, backward stochastic differential equations (BSDEs) 
were introduced by E. Pardoux and S. Peng in  \cite{parpen90}.
A subclass of BSDEs are said to be {\it Markovian}, if the randomness of
the so called driver $f$ 
depends on a Markovian diffusion $X$, and when the terminal condition depends
 on the terminal value $X_T$.
Those are naturally linked to a parabolic PDE, which constitutes
a particular deterministic problem.
In particular, under reasonable conditions,
 which among other ensure well-posedness,
the solutions of BSDEs produce {\it viscosity} type solutions
for the mentioned PDE. 
In this paper we focus on {\it Pseudo-PDEs}, which are the 
corresponding deterministic problems associated to
  Markovian BSDEs driven by a c\`adl\`ag martingale, when
 the underlying forward process is a general Markov process.
In this case, the concept of a viscosity solution (based on comparison theorems)
is not completely appropriate. For this reason we propose an alternative
notion called {\it decoupled mild solution.} This extends
 the usual formulation of a mild solution, expressed in terms of semigroups,
 which is well-known to the experts
 of PDEs. We establish an existence and uniqueness theorem
among Borel functions having a certain growth condition.

Coming back to Brownian BSDEs, let $s$ be an initial time and $x$ an initial value. A Markovian
 BSDE appears as
\begin{equation}\label{BSDEIntroN}
\left\{\begin{array}{rcl}
X^{s,x}_t &=& x+ \int_s^t \mu(r,X^{s,x}_r)dr +\int_s^t \sigma(r,X^{s,x}_r)dB_r,\quad t\in[s,T]\\
Y^{s,x}_t &=& g(X^{s,x}_T) + \int_t^T f\left(r,X^{s,x}_r,Y^{s,x}_r,Z^{s,x}_r\right)dr  -\int_t^T Z^{s,x}_rdB_r,\quad t\in[s,T],
\end{array}\right.
\end{equation}
where $B$ is a Brownian motion. 
In \cite{peng1991probabilistic} and in \cite{pardoux1992backward} 
previous Markovian BSDE was linked 
to the semilinear PDE
\begin{equation}\label{PDEparabolique}
\left\{
\begin{array}{l}
\partial_tu + \frac{1}{2}Tr\left(\sigma\sigma^\intercal\nabla^2_xu\right)+\mu\cdot\nabla^2_xu
 + f(\cdot,\cdot,u,\sigma\nabla_x u)=0\quad \text{ on } [0,T[\times\mathbbm{R}^d \\
u(T,\cdot) = g. 
\end{array}\right.
\end{equation}
The first link between \eqref{BSDEIntroN} and \eqref{PDEparabolique} was
 established in \cite{peng1991probabilistic}, where the authors showed that when
 the PDE admits a $\mathcal{C}^{1,2}$ solution $u$, then the couple 
  $(Y^{s,x},Z^{s,x})=(u(\cdot,X^{s,x}),\nabla u(\cdot,X^{s,x}))$ solves the BSDE. 
Conversely, if $g$ is continuous (resp. $f$  is continuous 
in $(t,x)$ and is Lipschitz in the third and fourth variable),
 \cite{pardoux1992backward} proved an
 important probability representation result of the (unique)
viscosity solution $u$ of the PDE, via 
the solutions of the Markovian BSDE for each $(s,x)$.
Indeed if  $(Y^{s,x},Z^{s,x})$ is the solution of \eqref{BSDEIntroN}, then $u:(s,x)\longmapsto Y^{s,x}_s$ is a continuous viscosity solution of \eqref{PDEparabolique}.
In  \cite{barles1997sde}, it was shown that, whenever the coefficients belong to some Sobolev spaces, then the function $u$ mentioned above is in fact 
a solution, in the sense of distributions,
 of the PDE.
Later, \cite{BSDEmildPardouxBally} justified that, under certain conditions, $u$ is a  
mild solution of the PDE.

An interesting fact is that, even without further
regularity assumptions made on
 the coefficients of the BSDE,  there exists another function $v$ such that
 $(Y^{s,x},Z^{s,x})=(u(\cdot, X^{s,x}),v(\cdot,X^{s,x}))$, see \cite{el1997backward}. 
In \cite{fuhrman2005generalized} $v$ was associated to $u$ by
 use  of the operator  $\sigma \nabla $ suitably extended.  
However, when the viscosity solution $u$ of the PDE 
has no additional regularity, 
 it is a challenging question to specify
the relation of the function $v$ to $u$, or to the PDE \eqref{PDEparabolique}.
This is the  so called {\it identification problem} and it will
be a central theme in our investigation.

 In \cite{barles1997backward} 
the authors  introduced 
 a new kind of  BSDEs driven by a Brownian motion and a    Poisson random measure. In the Markovian setup, the randomness of its coefficients comes from  an underlying forward process $X$ solving an SDE with jumps.
 They associated this new BSDE with  a non-linear Integro-Partial Differential
 Equation (in short IPDE)  and showed that, under some continuity  and monotonicity conditions on 
the coefficients, the function $u:(s,x)\longmapsto Y^{s,x}_s$ constructed with the BSDEs, is again  a viscosity solution of the IPDE.
Remaining in the framework of Poisson random measures, but without any
diffusion term,
 \cite{Confortola} considered BSDEs driven by marked point processes, see also \cite{BandiniBSDE}.

From a different perspective, BSDEs driven by a general martingale and involving an orthogonal term were studied in  \cite{el1997backward}, \cite{Buckdahn93}, and
 \cite{sant}.
In this paper, we consider a reformulation of such 
 BSDEs, whose given data are
 a continuous increasing process $\hat{V}$,
 a square integrable martingale $\hat{M}$, a terminal condition $\xi$ and a driver $\hat{f}$. 
A solution will be a couple $(Y,M)$ satisfying 
\begin{equation}\label{BSDEIntro}
Y=\xi +\int_{\cdot}^T\hat{f}\left(r,\cdot,Y_r,\frac{d\langle M,\hat{M} \rangle}{d\hat V}(r)\right)d\hat V_r - (M_T-M_{\cdot}),
\end{equation}
where $Y$ is c\`adl\`ag adapted and $M$ is a square integrable martingale.
 We show the existence and the uniqueness of a solution for \eqref{BSDEIntro}.

We will then be interested in a Markov process $(\mathbbm{P}^{s,x})_{(s,x)\in[0,T]\times E}$ taking values in some Polish space $E$ and solving  a martingale problem related to an operator $(\mathcal{D}(a),a)$ and a non-decreasing function $V$. By this we mean that, for any $\phi\in\mathcal{D}(a)$, and $(s,x)\in[0,T]\times E$, $M[\phi]^{s,x}:=\phi(\cdot ,X_{\cdot})-\phi(s,x)-\int_s^{\cdot}a(\phi)(r,X_r)dV_r$ is a $\mathbbm{P}^{s,x}$-martingale. We will fix some
 function $\psi:=(\psi_1,\cdots,\psi_d)\in \mathcal{D}(a)^d$ and at Notation  \ref{N55bis} we will introduce some special
BSDEs driven by a martingale which we will call again
Markovian BSDEs. 

Each BSDE will be indexed by a couple $(s,x)\in[0,T]\times E$, will hold under the probability $\mathbbm{P}^{s,x}$
and will have the form 
\begin{equation}\label{FBSDEIntro}
Y^{s,x}=g(X_T) +\int_{\cdot}^Tf\left(r,X_r,Y^{s,x}_r,\frac{d\langle M^{s,x},M[\psi]^{s,x} \rangle}{dV}(r)\right)dV_r - (M^{s,x}_T-M^{s,x}_{\cdot}),
\end{equation}
where $X$ is the canonical process, $g$ is a Borel function with a
 growth condition and $f$ is Borel, with a growth condition 
with respect to the second variable, and it is Lipschitz with respect 
to the third and fourth variables. 
In most of the examples, we will set $\psi$ to be the identity,
 and $M[\psi]^{s,x}$ will be the martingale part of $X$ under $\mathbbm{P}^{s,x}$. 
We will however also include the case when
 $X$ is not a semimartingale, and in particular  $Id\notin \mathcal{D}(a)^d$.

Those Markovian BSDEs will be linked to the Pseudo-PDE
\begin{equation}\label{PDEIntro}
\left\{
\begin{array}{rccc}
 a(u) + f\left(\cdot,\cdot,u,\Gamma^{\psi}(u)\right)&=&0& \text{ on } [0,T]\times E   \\
 u(T,\cdot)&=&g,& 
\end{array}\right.
\end{equation}
where $\Gamma^{\psi}(u):=\left(a(u\psi_i) - u a(\psi_i) - \psi_i a(u)\right)_{i\in[\![ 1;d]\!]}$, see Definition \ref{MarkovPDE}.
A {\it classical} solution of the Pseudo-PDE will simply be  an element of $\mathcal{D}(a)$ 
fulfilling  \eqref{PDEIntro}.
 We call $\Gamma^{\psi}$ the $\psi$-{\it generalized gradient},
 due to the fact that when $E=\mathbbm{R}^d$, $a=\partial_t+\frac{1}{2}\Delta$ and $\psi_i:(t,x)\longmapsto x_i$ for all $i\in[\![1,d]\!]$ then $\Gamma^{\psi}(u)=\nabla u$. In this particular setup, the forward Markov process is of course a Brownian motion and in this case, the space
 $\mathcal{D}(a) = C^{1,2}([0,T] \times {\mathbb R}^d)$. 

We show the existence of a Borel function $u$ in some extended domain
 $\mathcal{D}(\mathfrak{a})$ such that, for every $(s,x)\in[0,T]\times E$, $Y^{s,x}$ is  a $\mathbbm{P}^{s,x}$-modification of $u(\cdot,X_{\cdot})$
At Definition \ref{D417} we will introduce the notion of
 {\it martingale solution}  
 for the Pseudo-PDE \eqref{PDEIntro},             
operators $\mathfrak{a}$ and $\mathfrak{G}^{\psi}$
extending  $a$ and $\Gamma^\psi$.
 We also show that
 $u$
 is the unique {\it decoupled mild solution} of the same equation.
We explain below that concept of solution, which will be introduced
at Definition \ref{mildsoluv}.

A Borel function $u$ will be called decoupled mild solution
 if there exists an $\mathbbm{R}^d$-valued  Borel function $v:=(v_1,\cdots,v_d)$ such that, for every $(s,x)$,
\begin{equation}\label{MildIntro}
\left\{
    \begin{array}{rcl}
    u(s,x)&=&P_{s,T}[g](x)+\int_s^TP_{s,r}\left[f\left(\cdot,\cdot,u,v\right)(r,\cdot)\right](x)dV_r\\
    u\psi_1(s,x) &=&P_{s,T}[g\psi_1(T,\cdot)](x) -\int_s^TP_{s,r}\left[\left(v_1+ua(\psi_1)-\psi_1f\left(\cdot,\cdot,u,v\right)\right)(r,\cdot)\right](x)dV_r\\
    &\cdots&\\
    u\psi_d(s,x) &=&P_{s,T}[g\psi_d(T,\cdot)](x) -\int_s^TP_{s,r}\left[\left(v_d+ua(\psi_d)-\psi_df\left(\cdot,\cdot,u,v\right)\right)(r,\cdot)\right](x)dV_r,
    \end{array}\right.
\end{equation}
where $P$ is the time-dependent transition kernel
 associated to the Markov canonical class and to the operator $a$, see Notation \ref{N513}.
$v$ 
 coincides with $\mathfrak{G}^{\psi}(u)$ 
and the couple $(u,v)$ will be called solution to the 
{\it identification problem}, see Definition \ref{mildsoluv}.
The intuition behind this notion of solution relies on the fact that the
 equation $a(u)=-f(\cdot,\cdot,u,\Gamma^{\psi}(u))$ can be decoupled into
 the system 
\begin{equation}
\left\{
\begin{array}{ccl}
a(u) &=& - f(\cdot,\cdot,u,v)\\
v_i &=&  \Gamma^{\psi_i}(u),\quad i\in[\![1;d]\!],
\end{array}\right.
\end{equation}
which can be rewritten 
\begin{equation}
\left\{
\begin{array}{ccl}
a(u) &=& - f(\cdot,\cdot,u,v)\\
a(u\psi_i) &=& v_i +ua(\psi_i)- \psi_if(\cdot,\cdot,u,v),\quad i\in[\![1;d]\!].
\end{array}\right.
\end{equation}
Martingale solutions were introduced in \cite{paper1preprint} and 
decoupled mild solutions in \cite{paper2}, but in relation to
a specific type of Pseudo-PDE, for which $v$ is one-dimensional and which does
not include the usual parabolic PDE related to classical BSDEs.
 A first approach to classical solutions for
a general deterministic problem, associated with forward BSDEs 
with applications to the so called {\it F\"ollmer-Schweizer decomposition},
was performed by \cite{laachir}.    

The paper is organized as follows.
In Section \ref{S1} we propose an alternative formulation \eqref{BSDEIntro}
for BSDEs driven by c\`adl\`ag martingales discussed in \cite{sant}:  
 in Theorem \ref{uniquenessBSDE}
 (proved in Appendix \ref{A}),
we state existence and uniqueness for such equations. In Section \ref{S2}, we refer to
 a canonical Markov class and its corresponding  martingale problem.
In Definition \ref{domainextended} we define the extended domain $\mathcal{D}(\mathfrak{a})$;
     in Definition \ref{extended} and Notation \ref{ExtendedGradient},
appear the extended operator $\mathfrak{a}$ and $\mathfrak{G}^{\psi}$.
In Section \ref{S3}, we bring in the Pseudo-PDE \eqref{PDEIntro} (see Definition \ref{MarkovPDE}), the associated Markovian BSDEs \eqref{FBSDEIntro}, 
see Notation \ref{N55bis}. We introduce the notion of martingale solution of the Pseudo-PDE in \eqref{D417} and of decoupled mild solution in Definition \ref{mildsoluv}.
Propositions \ref{MartingaleImpliesMild} and \ref{MildImpliesMartingale}
 show the equivalence between martingale solutions and decoupled mild solutions. 
 Proposition \ref{CoroClassic} states 
that any classical solution is a decoupled mild solution and conversely that 
any decoupled mild solution, belonging to $\mathcal{D}(\Gamma^{\psi}),$ 
is a classical solution up to (what we call) a zero potential set.
Let  $(Y^{s,x},M^{s,x})$ denote the unique solution of the associated BSDE
\eqref{FBSDEIntro}, written as  $BSDE^{s,x}(f,g)$.
In  Theorem \ref{Defuv} we show the existence of some $u\in\mathcal{D}(\mathfrak{a})$
 such that for every $(s,x)\in[0,T]\times E$, $Y^{s,x}$ is a $\mathbbm{P}^{s,x}$-modification of $u(\cdot,X_{\cdot})$ on  $[s,T]$.  Theorem \ref{MainTheorem}
 states that the function $(s,x)\longmapsto Y^{s,x}_s$ is the unique decoupled mild solution of \eqref{PDEIntro}.
 Proposition \ref{MildImpliesBSDE} states that, if the couple $(u,v)$
 satisfies \eqref{MildIntro}, then for any $(s,x)$, the couple $\left(u(t,X_t),\quad u(t,X_t)-u(s,x)+\int_s^{t}f(\cdot,\cdot,u,v)(r,X_r)dV_r\right)_{t\in[s,T]}$ has a $\mathbbm{P}^{s,x}$-version which solves $BSDE^{s,x}(f,g)$ on $[s,T]$.
Finally, in Section \ref{S5}, we study some application examples. In Section \ref{S5a} we deal with parabolic semi-linear PDEs and in Section \ref{S5b} with parabolic semi-linear PDEs with distributional drift.

\section{Preliminary notions and basic notations}\label{preliminaries}

\label{SPrelim}

In this short section we introduce some basic notions, notations and vocabulary 
which will be used in this paper. $T\in\mathbbm{R}_+$ will be a fixed horizon.
\begin{itemize}
	\item For any topological spaces $E$ and $F$, $\mathcal{B}(E)$ will denote the Borel $\sigma$-field of $E$. $\mathcal{C}(E,F)$ (resp. 
$ \mathcal{C}_b(E,F)$, $\mathcal{B}(E,F)$, $\mathcal{B}_b(E,F)$)
 will denote linear
 the space of functions from $E$ to $F$ which are   continuous
(resp. bounded continuous, Borel, bounded Borel).
	\item A filtered probability space $\left(\Omega,\mathcal{F},(\mathcal{F}_t)_{t\in[0,T]},\mathbbm{P}\right)$  will be called  a \textbf{stochastic basis} and said to \textbf{fulfill the usual conditions} if the filtration is complete and right-continuous. 
	\item Given a certain stochastic basis, $\mathcal{H}^2$ will denote 	the  space of square integrable martingales, with the convention that
 indistinguishable elements are identified. $\mathcal{H}^2_0$ will denote the
linear subspace constituted of elements vanishing at zero, and $\mathcal{H}^2_{loc}$ will be the space of locally square integrable martingales.
	\item For any $M,N\in \mathcal{H}^2_{loc}$,  $[M,N]$ will denote  
the \textbf{quadratic covariation} and $\langle M,N\rangle$  their (predictable) \textbf{angle bracket}. If $M=N$ we will use the notations $[M]$ and $\langle M\rangle$.
	\item  $\mathcal{P}ro$ will denote
	the  progressive $\sigma$-field on $[0,T]\times\Omega$. 
	\item 	If $V$ is a non-decreasing process, $dV\otimes d\mathbbm{P}$ will denote the positive measure on 
	$(\Omega\times [0,T],\mathcal{F}\otimes\mathcal{B}([0,T]))$ defined for any $F\in\mathcal{F}\otimes\mathcal{B}([0,T])$ by 
	\\
	$dV\otimes d\mathbbm{P}( F) := \mathbbm{E}\left[\int_0^{T}\mathds{1}_F(\omega,r)dV_r(\omega)\right]$.
	\item If $V$ is a non-decreasing predictable process and  $A$ is a predictable process which is absolutely continuous with respect to $V$, then $\frac{dA}{dV}$ will denote its Radon-Nikodym derivative.
We recall that thanks to 
Proposition 3.2 in \cite{paper1preprint}, this process can be chosen
 to be predictable. 
\end{itemize}

\section{An alternative formulation of BSDEs driven by a 
c\`adl\`ag martingale}\label{S1}

We introduce now an alternative formulation for Backward Stochastic Differential Equations
 driven by a general c\`adl\`ag martingale investigated for instance by \cite{sant}.

From now on, and until the end of this section, we are given
a stochastic basis $\left(\Omega,\mathcal{F},(\mathcal{F}_t)_{t\in[0,T]},\mathbbm{P}\right)$ fulfilling the usual conditions.  
We are also given  some bounded continuous non-decreasing adapted process $\hat V$, we will indicate by $\mathcal{L}^2(d\hat V\otimes d\mathbbm{P})$ the set of 
(up to indistinguishability) progressively measurable processes $\phi$ such that $\mathbbm{E}[\int_0^T \phi^2_rd\hat V_r]<\infty$. 
$\mathcal{L}^{2,cadlag}(d\hat V\otimes d\mathbbm{P})$ will denote the subspace
 of  c\`adl\`ag elements of $\mathcal{L}^{2}(d\hat V\otimes d\mathbbm{P})$.  

We will now fix  an $\mathcal{F}_T$-measurable random variable $\xi$ called the \textbf{final condition}, a square integrable \textbf{reference martingale} $\hat{M}:=(\hat{M}^1,\cdots,\hat{M}^d)$ taking values in $\mathbbm{R}^d$ for some $d\in\mathbbm{N}^*$,  and a \textbf{driver} $\hat{f}:\left([0,T]\times\Omega\right)\times\mathbbm{R}\times\mathbbm{R}^d\longrightarrow\mathbbm{R}$, measurable with respect to  $\mathcal{P}ro\otimes \mathcal{B}(\mathbbm{R})\otimes \mathcal{B}(\mathbbm{R}^d)$.
We will assume that $(\xi,\hat{f},\hat{M})$ satisfies the following.

\begin{hypothesis}\label{HypBSDE}
$\,$
\begin{enumerate}
    \item $\xi\in L^2$;
    \item $\hat{f}(\cdot,\cdot,0,0)\in\mathcal{L}^2(d\hat V\otimes d\mathbbm{P})$; 
    \item There exist positive constants $K^Y,K^Z$ such that, $\mathbbm{P}$ a.s. for all $t,y,y',z,z'$, we have
    \begin{equation}
        |\hat{f}(t,\cdot,y,z)-\hat{f}(t,\cdot,y',z')|\leq K^Y|y-y'|+K^Z\|z-z'\|;
    \end{equation}
    \item $\langle \hat{M}\rangle$ is absolutely continuous with respect to $\hat V$ and $\frac{d\langle \hat{M}\rangle}{d\hat V}$ is bounded.
\end{enumerate}
\end{hypothesis}
We remark that, thanks to Kunita-Watanabe's inequality, the last assumption implies that for any $M\in\mathcal{H}^2_{loc}$, $\langle M,\hat{M}\rangle$ will also be absolutely continuous with respect to $\hat V$.

We will now  formulate precisely our BSDE.
\begin{definition}\label{firstdefBSDE}
We say that a couple
$(Y,M)\in \mathcal{L}^{2,cadlag}(d\hat V\otimes d\mathbbm{P})\times \mathcal{H}^2_0$ is a
solution of $BSDE(\xi,\hat{f}, V,\hat{M})$ if it satisfies
\begin{equation}\label{BSDEcadlag}
Y=\xi +\int_{\cdot}^T\hat{f}\left(r,\cdot,Y_r,\frac{d\langle M,\hat{M} \rangle}{d\hat V}(r)\right)d\hat V_r - (M_T-M_{\cdot})
\end{equation}
in the sense of indistinguishability. 
\end{definition}

The proof of the  theorem below is very similar to the one of Theorem 3.21
 in \cite{paper1preprint}. For the convenience of the reader, it is therefore postponed to Appendix \ref{A}.
\begin{theorem}\label{uniquenessBSDE}
If $(\xi,\hat{f},\hat V,\hat{M})$ satisfies Hypothesis \ref{HypBSDE}, then $BSDE(\xi,\hat{f},\hat V,\hat{M})$ has a unique solution.
\end{theorem}

\begin{remark}\label{BSDESmallInt}
 Let $(\xi,\hat{f},\hat V,\hat{M})$ satisfying 
Hypothesis \ref{HypBSDE}.
We can consider a
BSDE  on a restricted interval $[s,T]$ for some $s \in [0,T[$.
  Theorem \ref{uniquenessBSDE}
 extend easily to this case.
In particular there exists a unique couple of processes 
 $(Y^s,M^s)$, indexed by  $[s,T]$
 such that $Y^s$ is adapted, c\`adl\`ag and satisfies 
$\mathbbm{E}[\int_s^T(Y^s_r)^2d\hat V_r]<\infty$, such that $M^s$ is a martingale vanishing in $s$ and such that
$Y^s = \xi +\int_{\cdot}^T \hat{f}\left(r,\cdot,Y^s_r,
\frac{d\langle M^s,\hat{M}\rangle}{d\hat V}(r)\right)d\hat V_r -(M^s_T-M^s_{\cdot})$
in the sense of indistinguishability on $[s,T]$.
 
 Moreover, if $(Y,M)$ 
denotes the solution of $BSDE(\xi,\hat{f},\hat V,\hat{M})$ then $(Y,M_{\cdot}-M_s)$ and $(Y^s,M^s)$ coincide on $[s,T]$. This follows by an uniqueness
argument resulting by Theorem \ref{uniquenessBSDE} on the time interval
$[s,T]$.
\end{remark}

\begin{remark} \label{RemExtensions}
		\begin{enumerate}\
\item
\cite{sant} considers a BSDE driven by a c\`adl\`ag martingale
  which corresponds to the BSDE \eqref{BSDEIntroN}, where the Brownian
   motion $W$
   is replaced with a martingale $M$ with non-necessarily bounded angular
   bracket $\langle M \rangle$, with a remainder orthogonal martingale $N$.
The solution is given by a triplet $(Y,Z,N)$.
The authors make use of  weighted spaces of the type 
${\mathcal H}^2_{T,\beta}$ and ${\mathcal L}^2_{\beta}$. 
For instance ${\mathcal H}^2_{T,\beta}$ is 
  the space of all progressively measurable processes $\phi$ such that
 $\mathbb{E}{\int_0^T\phi^2_se^{\beta \langle M\rangle_s}}d\langle M\rangle_s<+\infty$.
In particular they find a value for $\beta$ such that existence and
 uniqueness holds within the class of triplets $(Y,Z,N)$ such that $Y, Z \in
 {\mathcal H}^2_{T,\beta}$
and $N \in {\mathcal L}^2_\beta$.
		\item 
Existence and uniqueness theorems for Brownian BSDEs can be also 
stated under more general assumptions than Lipschitz conditions.
In \cite{kobylanski2000backward}, the author
has obtained an existence result for possibly quadratic growth
BSDEs,
when the driver $f$ is of the form $f(t,y,z)=f^1(t,z)y+f^2(t,y,z)$ where
 $f^1$ is bounded a.s., and for all $t,y,z$,
 $|f^2(t,y,z)|\leq K(1+c(|y|)|z|^2)$ for some continuous function $c$. On the other hand the terminal condition $\xi$ is supposed to be bounded.
	\end{enumerate}
\medskip
We believe that several arguments developed in
the two previous items can be adapted to our context.
However, in this paper we have chosen not to explore the validity of Theorem 
\ref{uniquenessBSDE} under more general assumptions along the line of items 1.
 and 2.
It will be the object of future investigations.
\end{remark}

\section{Martingale Problem and canonical Markov classes}\label{S2}

We now introduce the Markov process which will be the forward underlying
 of our  BSDE driven by a c\`adl\`ag martingale. 
 That  process will be defined as the solution of  a  martingale problem described below.

For  details concerning the exact mathematical  framework  for 
our  Markov process, we refer to our previous paper  \cite{paperAF}
about canonical Markov classes and additive functionals.

From now on, $E$ is a Polish space and $\left(\Omega,\mathcal{F},(X_t)_{t\in[0,T]},(\mathcal{F}_t)_{t\in[0,T]}\right)$ denotes the canonical space defined in Notation 3.1 of \cite{paperAF}.  We also fix a  canonical Markov class $(\mathbbm{P}^{s,x})_{(s,x)\in[0,T]\times E}$ associated to a transition kernel $P = (P_{s,t})$ measurable in time as defined in Definitions 3.4, 3.5 and 3.7 in \cite{paperAF}. For any $(s,x)\in [0,T]\times E$, $\left(\Omega,\mathcal{F}^{s,x},(\mathcal{F}^{s,x}_t)_{t\in[0,T]},\mathbbm{P}^{s,x}\right)$ will denote the stochastic basis in which $\mathbbm{P}^{s,x}$-null sets are added to $\mathcal{F}$ and $\mathcal{F}_t$ for all $t$, and which fulfills the usual conditions. 
${\mathbbm E}^{s,x}$ will denote the corresponding expectation 
to $\mathbbm{\ P}^{s,x}$.
If $P_{s,t}$ only depends on $t-s$, $P$ is called
time-homogeneous and we 
we will often
use the notation $P_t$ instead of $P_{0,t}$.
\begin{notation} \label{N513}
          In particular, for any $t \in [0,T]$ and $A\in\mathcal{B}(E)$
\begin{equation}\label{Markov1}
\mathbbm{P}^{s,x}(X_t\in A)=P_{s,t}(x,A),
\end{equation}
and for any $s\leq t\leq u$
\begin{equation}\label{Markov2}
\mathbbm{P}^{s,x}(X_u\in A|\mathcal{F}_t)=P_{t,u}(X_t,A)\quad \mathbbm{P}^{s,x}\text{ a.s.}
\end{equation}
        	Let $s,t$ in $[0,T]$ with $s\leq t$, $x\in E$ and $\phi\in \mathcal{B}(E,\mathbbm{R}).$  If $\phi$ is integrable with respect to $P_{s,t}(x,\cdot)$, then $P_{s,t}[\phi](x)$ will denote its integral.
\end{notation}

We recall two important measurability properties,
essentially stated in \cite{paper2}, even though with
 $V(t) \equiv t$.
\begin{remark} \label{R63}\leavevmode
	\begin{itemize}
		\item Let $\phi\in\mathcal{B}(E,\mathbbm{R})$ be such that for any $(s,x,t)$, 
		$\mathbbm{E}^{s,x}[|\phi(X_t)|]<\infty$, then  $(s,x,t)\longmapsto P_{s,t}[\phi](x)$ is Borel, see Proposition A.11 in \cite{paper2}.
		\item Let $\phi\in\mathcal{L}^1_X$,
		 then $(s,x)\longmapsto\int_s^TP_{s,r}[\phi](x)dV_r$ is Borel, see Lemma A.10 in \cite{paper2}.
	\end{itemize}
\end{remark}

\begin{definition}\label{MartingaleProblem}
 Let  
$V:[0,T]\rightarrow\mathbbm{R}_+$
be a non-decreasing continuous function 
 vanishing at 0.
Let us consider a linear operator
$a:\mathcal{D}(a) \subset \mathcal{B}([0,T]\times E,\mathbbm{R})
 \longrightarrow \mathcal{B}([0,T]\times E,\mathbbm{R})$,
where the domain $\mathcal{D}(a)$ is a linear space.
\\

We say that  $(\mathbbm{P}^{s,x})_{(s,x)\in [0,T]\times E}$  solves the {\bf martingale problem associated to}  $(\mathcal{D}(a),a,V)$ if, for  any 
$(s,x)\in[0,T]\times E$, $\mathbbm{P}^{s,x}$ satisfies the following. 
\begin{description}
\item{(a)} $\mathbbm{P}^{s,x}(\forall t\in[0,s], X_t=x)=1$;
\item{(b)} for every $\phi\in\mathcal{D}(a)$, $\phi(\cdot,X_{\cdot}) - \int_s^{\cdot} a(\phi)(r,X_r)dV_r$, 
$t \in [s,T]$, is  a c\`adl\`ag $(\mathbbm{P}^{s,x},(\mathcal{F}_t)_{t\in[s,T]})$ square integrable martingale.
\end{description}
 The Martingale Problem is said to be \textbf{well-posed} if for any $(s,x)\in[0,T]\times E$, $\mathbbm{P}^{s,x}$ is the unique probability measure satisfying 
those two properties.
\end{definition}
We anticipate that well-posedness for the martingale problem will not
be an hypothesis in the sequel.

\begin{notation}\label{Mphi}
For every $(s,x)\in[0,T]\times E$ and $\phi\in\mathcal{D}(a)$, the process 
\\
$t\mapsto\mathds{1}_{[s,T]}(t)\left(\phi(t,X_{t})-\phi(s,x)-\int_s^{t}a(\phi)(r,X_r)dV_r\right)$ will be denoted $M[\phi]^{s,x}$.
\end{notation}
$M[\phi]^{s,x}$ is a c\`adl\`ag $(\mathbbm{P}^{s,x},(\mathcal{F}^{s,x}_t)_{t\in[0,T]})$ square integrable martingale vanishing on $[0,s]$.

\begin{notation}\label{MphiMAF}
Let $\phi\in\mathcal{D}(a)$. 
 For $0\leq t\leq u\leq T$, we set 
\begin{equation}
M[\phi]^t_u:=
\left\{
\begin{array}{l}
\phi(u,X_u)-\phi(t,X_t)-\int_t^u a(\phi)(r,X_r)dV_r\text{ if }\int_t^u|a(\phi)|(r,X_r)dV_r <\infty, \\
0 \ \text{otherwise}.
\end{array}\right.
\end{equation}
$M[\phi]$ is  a square integrable Martingale Additive Functional (in short MAF), see Definition 4.1 in \cite{paperAF}, whose c\`adl\`ag version  under $\mathbbm{P}^{s,x}$ for every $(s,x)\in[0,T]\times E$, is  $M[\phi]^{s,x}$.
\end{notation}

From now on we fix some $d\in\mathbbm{N}^*$  and a vector $\psi = (\psi_1, \ldots,\psi_d)\in\mathcal{D}(a)^d$.
For any $(s,x)\in[0,T]\times E$, the $\mathbbm{R}^d$-valued martingale $(M[\psi_1]^{s,x},\cdots,M[\psi_d]^{s,x})$ will be denoted $M[\psi]^{s,x}$.

\begin{definition}\label{CarreDuChamp}
For any $\phi_1,\phi_2\in\mathcal{D}(a)$ such that $\phi_1\phi_2\in\mathcal{D}(a)$ we set $\Gamma(\phi_1,\phi_2):=a(\phi_1\phi_2)-\phi_1a(\phi_2)-\phi_2a(\phi_1)$. $\Gamma$ will be called the {\bf carr\'e du champs operator}.
We set 
$\mathcal{D}(\Gamma^{\psi}):=\left\{\phi\in\mathcal{D}(a):
\forall i\in[\![ 1;d]\!],\phi\psi^i\in\mathcal{D}(a)\right\}$ and
we  define the linear operator $\Gamma^{\psi}:\mathcal{D}(\Gamma^{\psi})\longrightarrow\mathcal{B}([0,T]\times E,\mathbbm{R}^d)$ by 
\begin{equation} \label{D45}
   \Gamma^{\psi}(\phi) := \left(\Gamma^{\psi_i}(\phi)\right)_{i\in[\![ 1;d]\!]}:= \left(a(\phi\psi_i) - \phi a(\psi_i) - \psi_i a(\phi)\right)_{i\in[\![ 1;d]\!]}.
\end{equation} 
$\Gamma^{\psi}$ will be called the $\psi$-\textbf{generalized gradient operator}.
\end{definition}
We emphasize that this terminology is justified by the considerations below \eqref{PDEIntro}.
This operator appears in the expression of the angular bracket of the local martingales
 that we have defined.

\begin{proposition}\label{bracketindomain}
If $\phi\in \mathcal{D}(\Gamma^{\psi})$, then for any  $(s,x)\in[0,T]\times E$ and $i\in[\![ 1;d]\!]$ we have
\begin{equation} \label{Ebracket}
\langle M[\phi]^{s,x} , M[\psi_i]^{s,x} \rangle = \int_s^{\cdot\vee s} \Gamma^{\psi_i}(\phi)(r,X_r)dV_r,
\end{equation}
 in the stochastic basis $(\Omega,\mathcal{F}^{s,x},(\mathcal{F}^{s,x}_t)_{t\in[0,T]},\mathbbm{P}^{s,x}).$
\end{proposition}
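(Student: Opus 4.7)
The plan is to apply the integration-by-parts (product) formula to the semimartingale decompositions of $\phi(\cdot,X_{\cdot})$ and $\psi_i(\cdot,X_{\cdot})$ provided by Notation \ref{Mphi}, and to compare the result with the semimartingale decomposition of $(\phi\psi_i)(\cdot,X_{\cdot})$ which is available because the assumption $\phi\in\mathcal{D}(\Gamma^{\psi})$ gives $\phi\psi_i\in\mathcal{D}(a)$.

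Concretely, fix $(s,x)$ and work in $(\Omega,\mathcal{F}^{s,x},(\mathcal{F}^{s,x}_t)_{t\in[0,T]},\mathbbm{P}^{s,x})$. On $[s,T]$ set $N^{\phi}_t:=M[\phi]^{s,x}_t$, $N^{\psi_i}_t:=M[\psi_i]^{s,x}_t$ and $N^{\phi\psi_i}_t:=M[\phi\psi_i]^{s,x}_t$, which are cadlag square integrable martingales by Definition \ref{MartingaleProblem}(b). The identities
\[
\phi(t,X_t)=\phi(s,x)+\int_s^t a(\phi)(r,X_r)\,dV_r+N^{\phi}_t,
\]
and analogous ones for $\psi_i$ and $\phi\psi_i$, exhibit these processes as special semimartingales on $[s,T]$. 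First I would apply the integration-by-parts formula to the product $\phi(t,X_t)\psi_i(t,X_t)$, obtaining
\begin{align*}
\phi(t,X_t)\psi_i(t,X_t)&=\phi(s,x)\psi_i(s,x)+\int_s^t\phi(r-,X_{r-})\,dN^{\psi_i}_r+\int_s^t\psi_i(r-,X_{r-})\,dN^{\phi}_r\\
&\quad+\int_s^t\phi(r-,X_{r-})a(\psi_i)(r,X_r)\,dV_r+\int_s^t\psi_i(r-,X_{r-})a(\phi)(r,X_r)\,dV_r\\
&\quad+[N^{\phi},N^{\psi_i}]_t.
\end{align*}
Since $V$ is continuous and $X$ has at most countably many jumps, the left limits $\phi(r-,X_{r-})$ and $\psi_i(r-,X_{r-})$ can be replaced by $\phi(r,X_r)$ and $\psi_i(r,X_r)$ inside the $dV_r$-integrals.

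Next I would subtract the direct decomposition $\phi(t,X_t)\psi_i(t,X_t)=\phi(s,x)\psi_i(s,x)+\int_s^t a(\phi\psi_i)(r,X_r)\,dV_r+N^{\phi\psi_i}_t$ from the previous equality. Using the definition \eqref{D45} of $\Gamma^{\psi_i}$, the $dV_r$-terms collapse to $\int_s^t\Gamma^{\psi_i}(\phi)(r,X_r)\,dV_r$, yielding
\[
[N^{\phi},N^{\psi_i}]_t=\int_s^t\Gamma^{\psi_i}(\phi)(r,X_r)\,dV_r+L_t,
\]
where $L$ is the local martingale $N^{\phi\psi_i}-\int_s^{\cdot}\phi(r-,X_{r-})dN^{\psi_i}_r-\int_s^{\cdot}\psi_i(r-,X_{r-})dN^{\phi}_r$.

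To finish, I would invoke uniqueness of the Doob--Meyer decomposition: the continuous finite variation process $\int_s^{\cdot}\Gamma^{\psi_i}(\phi)(r,X_r)\,dV_r$ is predictable (as $\Gamma^{\psi_i}(\phi)(\cdot,X_{\cdot})$ is progressively measurable and $V$ is continuous), so $[N^{\phi},N^{\psi_i}]$ differs from this predictable finite variation process by the local martingale $L$; this precisely identifies it as the compensator $\langle N^{\phi},N^{\psi_i}\rangle$, which is \eqref{Ebracket}. The only subtle point is the interchange between left limits and the process itself inside the $dV_r$-integrals, but this is immediate from continuity of $V$. The integrability needed to apply the product formula and pass to the angular bracket is guaranteed by the square integrability of $N^{\phi},N^{\psi_i},N^{\phi\psi_i}$ coming from Definition \ref{MartingaleProblem}(b); should any be needed only locally, a standard localization argument would conclude.
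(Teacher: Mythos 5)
Your proof is correct and follows exactly the route the paper intends: the paper's own ``proof'' merely delegates to Proposition 4.8 of the cited preprint, and the standard argument there is precisely your integration-by-parts computation comparing the product of the two semimartingale decompositions with the direct decomposition of $\phi\psi_i(\cdot,X_{\cdot})$, followed by identification of the predictable finite-variation part. You have in effect supplied the details that the paper leaves to the reference, including the two genuinely relevant technical points (replacing left limits under the atomless measure $dV$, and the predictability plus finite variation of $\int_s^{\cdot}\Gamma^{\psi_i}(\phi)(r,X_r)\,dV_r$ needed to invoke uniqueness of the special semimartingale decomposition).
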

\begin{proof}
The result follows from a slight modification of the proof of Proposition 4.7 of \cite{paper1preprint} in which $\mathcal{D}(a)$ was assumed to be stable by multiplication and $M[\phi]^{s,x}$ could potentially be a local martingale 
which is not a martingale.
\end{proof}

We will later need the following assumption.

\begin{hypothesis}\label{HypBrackPhi}
For every $i\in[\![ 1;d]\!]$, the Additive Functional  $\langle M[\psi_i]\rangle$ (which is well defined thanks to Corollary 4.9 in \cite{paperAF})  has càdlàg versions which are absolutely continuous with respect to $dV$.
\end{hypothesis}

Taking $\phi = \psi_i$ for some $i\in[\![ 1;d]\!]$ in Proposition \ref{bracketindomain}, yields the following.
\begin{corollary}\label{H2Vloc}	
If $\psi_i^2\in\mathcal{D}(a)$ for all $i\in[\![ 1;d]\!]$, then Hypothesis \ref{HypBrackPhi} is fulfilled.
\end{corollary}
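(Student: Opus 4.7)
The strategy is to apply Proposition \ref{bracketindomain} with $\phi = \psi_i$ and the $i$-th coordinate, for each fixed $i \in [\![ 1;d ]\!]$. Under the hypothesis $\psi_i^2 \in \mathcal{D}(a)$, the expression
\[
\Gamma^{\psi_i}(\psi_i) = a(\psi_i^2) - 2\psi_i a(\psi_i)
\]
is well-defined as an element of $\mathcal{B}([0,T]\times E, \mathbbm{R})$. Plugging into \eqref{Ebracket}, I would obtain, for every $(s,x)\in[0,T]\times E$,
\[
\langle M[\psi_i]^{s,x} \rangle = \int_s^{\cdot \vee s} \Gamma^{\psi_i}(\psi_i)(r,X_r)\, dV_r
\]
in the stochastic basis $(\Omega, \mathcal{F}^{s,x}, (\mathcal{F}^{s,x}_t)_{t\in[0,T]}, \mathbbm{P}^{s,x})$. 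The right-hand side is, by its integral representation, absolutely continuous with respect to $dV$.

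The only subtlety is the mismatch between the standing hypothesis ($\psi_i^2 \in \mathcal{D}(a)$ only) and the formal domain $\mathcal{D}(\Gamma^{\psi}) = \{\phi \in \mathcal{D}(a) : \phi \psi_j \in \mathcal{D}(a)\ \forall j\}$ required in the statement of Proposition \ref{bracketindomain}. I would handle this by pointing out that the computation of the bracket $\langle M[\psi_i]^{s,x}, M[\psi_i]^{s,x} \rangle$ only involves the diagonal product $\psi_i \cdot \psi_i = \psi_i^2 \in \mathcal{D}(a)$, so the argument in the proof of Proposition \ref{bracketindomain} (a localized Itô-formula / integration-by-parts manipulation) goes through verbatim without requiring any of the off-diagonal cross products $\psi_i \psi_j$ for $j \neq i$ to lie in $\mathcal{D}(a)$.

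To conclude, since the identity above holds under every $\mathbbm{P}^{s,x}$ and since $\langle M[\psi_i]^{s,x} \rangle$ is precisely the cadlag version under $\mathbbm{P}^{s,x}$ of the AF $\langle M[\psi_i] \rangle$ produced by Proposition \ref{AFbracket}, I would invoke the definition of absolute continuity for additive functionals (Definition \ref{DefAF}) to conclude that $\langle M[\psi_i] \rangle \ll dV$ as AFs. As $i$ was arbitrary, Hypothesis \ref{HypBrackPhi} is verified. The main (and only) obstacle is the cosmetic domain-matching issue above; once that is dispatched, the proof is essentially immediate.
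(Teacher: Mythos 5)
Your proof is correct and follows exactly the paper's route: the corollary is stated there as an immediate consequence of taking $\phi=\psi_i$ in Proposition \ref{bracketindomain}, which gives $\langle M[\psi_i]^{s,x}\rangle=\int_s^{\cdot\vee s}\Gamma^{\psi_i}(\psi_i)(r,X_r)\,dV_r$ under every $\mathbbm{P}^{s,x}$, hence absolute continuity of the AF $\langle M[\psi_i]\rangle$ with respect to $dV$. Your remark on the mismatch between the hypothesis $\psi_i^2\in\mathcal{D}(a)$ and the formal domain $\mathcal{D}(\Gamma^{\psi})$, resolved by observing that only the diagonal product $\psi_i^2$ enters the computation of $\langle M[\psi_i]^{s,x}\rangle$, is a legitimate point that the paper glosses over, and your resolution is the right one.
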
 

We will now consider suitable extensions of the domain $\mathcal{D}(a)$.
\\
\\
For any $(s,x)\in[0,T]\times E$ we define the positive bounded \textbf{potential measure} $U(s,x,\cdot)$ on $\left([0,T]\times E,\mathcal{B}([0,T])\otimes \mathcal{B}(E)\right)$ by 
\\
$U(s,x,\cdot):\begin{array}{ccl}
\mathcal{B}([0,T])\otimes \mathcal{B}(E)&\longrightarrow& [0,V_T]\\
A &\longmapsto& \mathbbm{E}^{s,x}\left[\int_s^{T} \mathds{1}_{\{(t,X_t)\in A\}}dV_t\right].
\end{array}$

\begin{definition}\label{zeropotential}
A Borel set $A\subset [0,T]\times E$ will be said to be
 {\bf of zero potential} if, for any $(s,x)\in[0,T]\times E$  we have  $U(s,x,A) = 0$.
\end{definition}
\begin{notation}\label{topo}
Let 
$p > 0 $. We introduce
\\
${\mathcal L}^p_{s,x} :={\mathcal L}^p(U(s,x,\cdot)) =\left\{ f\in \mathcal{B}([0,T]\times E,\mathbbm{R}):\, \mathbbm{E}^{s,x}\left[\int_s^{T} |f|^p(r,X_r)dV_r\right] < \infty\right\}$.
\\
For $p \ge 1$, that classical $\mathcal{L}^p$-space is equipped with the
 seminorm
\\
$\|\cdot\|_{p,s,x}:f\mapsto \left(\mathbbm{E}^{s,x}\left[\int_s^{T}|f(r,X_r)|^pdV_r\right]\right)^{\frac{1}{p}}$.
 We also introduce 
\\
${\mathcal L}^0_{s,x} :={\mathcal L}^0(U(s,x,\cdot)) = \left\{ f\in \mathcal{B}([0,T]\times E,\mathbbm{R}):\, \int_s^{T} |f|(r,X_r)dV_r < \infty\quad\mathbbm{P}^{s,x}\text{ a.s.}\right\}$.
\\
For any $p\ge 0$ we set 
\begin{equation}
\mathcal{L}^p_X =\underset{(s,x)\in[0,T]\times E}{\bigcap}{\mathcal L}^p_{s,x}.
\end{equation}
Let $\mathcal{N}$ be the linear subspace of $\mathcal{B}([0,T]\times E,\mathbbm{R})$ containing all functions which are equal to 0, $U(s,x,\cdot)$ a.e. for every $(s,x)$.
For any $p \ge 0 $,
 we define  the quotient space $L^p_X = \mathcal{L}^p_X /\mathcal{N}$.
If $p \ge 1$, $L^p_X$ can be equipped with the topology generated by the family of semi-norms $\left(\|\cdot\|_{p,s,x}\right)_{(s,x)\in[0,T]\times E}$ which makes it a separate locally convex topological vector space, 
see Theorem 5.76 in \cite{aliprantis}.
\end{notation}

We recall that Proposition 4.13 in \cite{paper1preprint} states the following.
\begin{proposition}\label{uniquenessupto}
Let $f$ and $g$ be in $\mathcal{L}^0_X$.
Then $f$ and $g$ are equal up to a set of zero potential if and only if for any $(s,x)\in[0,T]\times E$, the processes $\int_s^{\cdot}f(r,X_r)dV_r$ and $\int_s^{\cdot}g(r,X_r)dV_r$ are indistinguishable under $\mathbbm{P}^{s,x}$.
Of course in  this case $f$ and $g$ correspond to the same element of $L^0_X$.
\end{proposition}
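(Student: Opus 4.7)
The plan is to reduce to the case of a single function by setting $h := f - g$; note that $h$ lies in $\mathcal{L}^0_X$ since $|h| \le |f| + |g|$. The equivalence to establish becomes: $h$ vanishes up to a zero potential set if and only if, for every $(s,x)$, the process $\int_s^{\cdot} h(r,X_r)\, dV_r$ is $\mathbbm{P}^{s,x}$-indistinguishable from the zero process. Throughout, I will use that since $V$ is continuous, both processes under consideration are continuous in $t$, so indistinguishability reduces to pathwise identity outside a null set.

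For the direct implication, I would argue as follows. If $A := \{h \neq 0\}$ has zero potential, then by Definition \ref{zeropotential} and the definition of $U$, for every $(s,x)$ we have $\mathbbm{E}^{s,x}\!\left[\int_s^T \mathds{1}_A(r,X_r)\, dV_r\right] = 0$, hence for $\mathbbm{P}^{s,x}$-a.e.\ $\omega$ one has $h(r,X_r(\omega)) = 0$ for $dV_r$-a.e.\ $r \in [s,T]$. Consequently $\int_s^t h(r,X_r)\, dV_r = 0$ simultaneously for all $t \in [s,T]$, which gives the desired indistinguishability of the two integral processes.

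The main work lies in the converse, which rests on showing that a pathwise identity of the primitives forces the integrand to vanish $dV$-a.e.\ along the trajectory. Fixing $(s,x)$ and $\omega$ outside the exceptional $\mathbbm{P}^{s,x}$-null set, the continuous function $t \mapsto \int_s^t h(r, X_r(\omega))\, dV_r$ is identically zero on $[s,T]$. I would then observe that this means the signed Borel measure on $[s,T]$ with density $r \mapsto h(r,X_r(\omega))$ with respect to $dV$ vanishes on every interval $[s,t]$, and hence vanishes as a measure (by a standard $\pi$-$\lambda$ argument, or by Proposition \ref{Decomposition} and uniqueness of Radon--Nikodym derivatives applied to its positive and negative parts separately). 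It follows that $h(r, X_r(\omega)) = 0$ for $dV_r$-a.e.\ $r \in [s,T]$, i.e.\ $\int_s^T \mathds{1}_{\{h \neq 0\}}(r,X_r(\omega))\, dV_r = 0$.

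Finally, integrating this pointwise statement under $\mathbbm{P}^{s,x}$ yields $U(s,x,\{h\neq 0\}) = \mathbbm{E}^{s,x}\!\left[\int_s^T \mathds{1}_{\{h \neq 0\}}(r,X_r)\, dV_r\right] = 0$ for every $(s,x) \in [0,T]\times E$, which by Definition \ref{zeropotential} means precisely that $\{h \neq 0\}$ has zero potential, so $f$ and $g$ differ only on a zero potential set. The last sentence of the statement (that $f$ and $g$ then define the same element of $L^0_X$) is an immediate consequence of the definition of $\mathcal{N}$ in Notation \ref{topo}. The only real subtlety is the passage from vanishing primitive to vanishing density, for which the positive/negative-part decomposition handles the possibility that $h$ changes sign; once this is dispatched, the rest is bookkeeping with the definitions of $U$ and of zero potential.
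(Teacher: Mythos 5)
Your argument is correct. Note that this paper does not contain its own proof of the statement: it is recalled verbatim from Proposition 4.14 of \cite{paper1preprint}, so there is no in-paper argument to compare against. That said, your route — reducing to $h:=f-g\in\mathcal{L}^0_X$, observing in the forward direction that $U(s,x,\{h\neq 0\})=0$ forces $h(r,X_r)=0$ for $dV$-a.e.\ $r$ almost surely, and in the converse identifying the signed measure $h(r,X_r(\omega))\,dV_r$ with the zero measure via a $\pi$-$\lambda$ (or positive/negative part) argument before integrating under $\mathbbm{P}^{s,x}$ — is exactly the standard argument one would expect, and every step (in particular the a.s.\ finiteness needed to subtract the two integrals, and the continuity of $V$ ensuring indistinguishability reduces to a single null set) is properly accounted for.
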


We introduce now  our notion of \textbf{extended generator}
starting from its domain.
\begin{definition}\label{domainextended}
	We first define the \textbf{extended domain} $\mathcal{D}(\mathfrak{a})$ as the set of functions $\phi\in\mathcal{B}([0,T]\times E,\mathbbm{R})$ for which there exists 
	\\
	$\chi\in\mathcal{L}^0_X$ such that under any $\mathbbm{P}^{s,x}$ the process
	\begin{equation} \label{E45}
	\mathds{1}_{[s,T]}\left(\phi(\cdot,X_{\cdot}) - \phi(s,x) - \int_s^{\cdot}\chi(r,X_r)dV_r \right) 
	\end{equation}
	(which is not necessarily c\`adl\`ag) has a c\`adl\`ag modification in $\mathcal{H}^2_{0}$. 
\end{definition}

A direct consequence of  Proposition 4.15 in \cite{paper1preprint} is the following.
\begin{proposition}\label{uniquenesspsi}
	Let $\phi \in {\mathcal B}([0,T] \times E, {\mathbbm R}).$
	There is at most one (up to zero potential sets)  $\chi 
	\in {\mathcal L}^0_X$ such that
	under any $\mathbbm{P}^{s,x}$, the process defined in \eqref{E45} 
	has a modification which belongs to $\mathcal{H}^2$.
	\\
	If moreover $\phi\in\mathcal{D}(a)$, then $a(\phi)=\chi$ up to zero potential sets. In this case, according to Notation \ref{Mphi},
	for every $(s,x)\in[0,T]\times E$, 
	$M[\phi]^{s,x}$ is the $\mathbbm{P}^{s,x}$ c\`adl\`ag modification
	in $\mathcal{H}^2_{0}$ of
	$\mathds{1}_{[s,T]}\left(\phi(\cdot,X_{\cdot}) - \phi(s,x) - \int_s^{\cdot}\chi(r,X_r)dV_r \right)$.
\end{proposition}

\begin{definition}\label{extended}
	Let $\phi \in \mathcal{D}(\mathfrak{a})$ as in Definition
	\ref{domainextended}.
	We denote again  by $M[\phi]^{s,x}$, the unique
	c\`adl\`ag version
	of the process \eqref{E45} in $\mathcal{H}^2_{0}$.
	Taking Proposition \ref{uniquenessupto} into account, this will not
	generate  any ambiguity with respect
	to Notation \ref{Mphi}.   
	Proposition \ref{uniquenessupto}, also permits to define without ambiguity the operator  
	\begin{equation*}
	\mathfrak{a}:
	\begin{array}{ccl}
	\mathcal{D}(\mathfrak{a})&\longrightarrow& L^0_X\\
	\phi &\longmapsto & \chi.
	\end{array}
	\end{equation*}
	$\mathfrak{a}$ will be called the \textbf{extended generator}.
\end{definition}
\begin{remark}\label{Rextendeda}
$\mathfrak{a}$ extends $a$ in the sense that $\mathcal{D}(a)\subset\mathcal{D}(\mathfrak{a})$ (comparing Definitions \ref{domainextended} and \ref{MartingaleProblem}) and if $\phi\in\mathcal{D}(a)$ then $a(\phi)$ is an element of the class $\mathfrak{a}(\phi)$, see Proposition \ref{uniquenesspsi}.
\end{remark}

We also introduce an extended $\psi$-generalized gradient. 

\begin{proposition} \label{P321}
Assume the validity of Hypothesis \ref{HypBrackPhi}.
	Let $\phi\in\mathcal{D}(\mathfrak{a})$ and $i\in[\![ 1;d]\!]$.  There exists a (unique up to zero-potential sets) function in $\mathcal{B}([0,T]\times E,\mathbbm{R})$ which we will denote $\mathfrak{G}^{\psi_i}(\phi)$ such that under any $\mathbbm{P}^{s,x}$, 
	$\langle M[\phi]^{s,x},M[\psi_i]^{s,x}\rangle=\int_s^{\cdot\vee s}\mathfrak{G}^{\psi_i}(\phi)(r,X_r)dV_r$  up to indistinguishability.
\end{proposition}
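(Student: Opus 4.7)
My plan is to establish the result in three stages: first show that under each $\mathbb{P}^{s,x}$ the bracket $\langle M[\phi]^{s,x},M[\psi_i]^{s,x}\rangle$ is absolutely continuous with respect to $dV$; then use the additive functional structure to glue the resulting family of Radon-Nikodym densities into a single Borel function $\mathfrak{G}^{\psi_i}(\phi)$; and finally invoke Proposition \ref{uniquenessupto} for uniqueness.

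For the absolute continuity, I would rely on the Kunita-Watanabe inequality in its predictable-set form: for every predictable set $A\subset[0,T]\times\Omega$,
\[
\int_0^T\mathds{1}_A\,d|\langle M[\phi]^{s,x},M[\psi_i]^{s,x}\rangle|\le\left(\int_0^T\mathds{1}_A\,d\langle M[\phi]^{s,x}\rangle\right)^{\!1/2}\!\left(\int_0^T\mathds{1}_A\,d\langle M[\psi_i]^{s,x}\rangle\right)^{\!1/2}.
\]
If $A$ carries zero $dV$-mass almost surely, Hypothesis \ref{HypBrackPhi} forces the second factor to vanish, so the total variation of the cross bracket is itself absolutely continuous with respect to $dV$. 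Proposition \ref{Decomposition} applied to the positive and negative variations then yields a predictable process $G^{s,x}$ with $\langle M[\phi]^{s,x},M[\psi_i]^{s,x}\rangle=\int_s^{\cdot\vee s}G^{s,x}(r)\,dV_r$, unique up to $dV\otimes d\mathbb{P}^{s,x}$ null sets.

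The main obstacle is to show that $G^{s,x}$ can be chosen in the Markovian form $\mathfrak{G}^{\psi_i}(\phi)(r,X_r)$ with a single Borel function not depending on $(s,x)$. I would first check, via Proposition \ref{uniquenesspsi} together with the additivity of the decomposition in \eqref{E45}, that $M[\phi]$ extends to a square integrable MAF in the sense of Definition \ref{DefAF}, exactly as in Notation \ref{MphiMAF}. By polarization,
\[
\langle M[\phi],M[\psi_i]\rangle=\tfrac14\bigl(\langle M[\phi]+M[\psi_i]\rangle-\langle M[\phi]-M[\psi_i]\rangle\bigr),
\]
so the cross bracket is a linear combination of the AFs associated (via Proposition \ref{AFbracket}) to the MAFs $M[\phi]\pm M[\psi_i]$, hence itself an AF. This AF has already been shown to be absolutely continuous with respect to $\int_0^\cdot dV_r$, so I would apply the Radon-Nikodym type representation theorem for AFs absolutely continuous with respect to $\int_0^\cdot dV_r$ (of the kind developed in \cite{paper1preprint}) to produce a Borel function $\mathfrak{G}^{\psi_i}(\phi):[0,T]\times E\to\mathbb{R}$ that works uniformly in $(s,x)$.

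Uniqueness up to zero-potential sets is then a direct consequence of Proposition \ref{uniquenessupto} applied to any two candidate Borel densities, since these densities produce indistinguishable integrals $\int_s^{\cdot}\mathfrak{G}^{\psi_i}(\phi)(r,X_r)\,dV_r$ under every $\mathbb{P}^{s,x}$ by construction.
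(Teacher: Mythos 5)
Your proposal is correct and follows essentially the same route as the paper: both arguments construct the square integrable MAF $M[\phi]$ from a representative $\chi$ of $\mathfrak{a}(\phi)$, use Hypothesis \ref{HypBrackPhi} for the absolute continuity of $\langle M[\psi_i]\rangle$ with respect to $dV$, obtain the Borel density via the representation theorem for brackets of MAFs (Proposition \ref{BracketMAFnew}), and conclude uniqueness from Proposition \ref{uniquenessupto}. Your preliminary Kunita--Watanabe and polarization steps are sound but redundant, since Propositions \ref{AFbracket} and \ref{BracketMAFnew} already subsume them.
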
 
\begin{proof}
We fix $i\in[\![1;d]\!]$. Let $M[\psi_i]$ be the
square integrable MAF (see  4.1 in \cite{paperAF})
presented in Notation \ref{MphiMAF}. 
We introduce the random field $M[\phi] = (M[\phi]_u^t)_{(0\le t \le u\leq T)} $ 
 as follows.
We fix some $\chi$ in the class $ \mathfrak{a}(\phi)$ and set
\begin{equation}
M[\phi]^t_u:=\left\{
\begin{array}{l}
\phi(u,X_u)-\phi(t,X_t)-\int_t^u\chi(r,X_r)dV_r\text{ if }\int_t^u|\chi|(r,X_r)dV_r <\infty,  t\leq u,   \\ 
0 \ \text{elsewhere},
\end{array}\right.
\end{equation}
 We emphasize that, a priori, the function $\chi$ is only in $\mathcal{L}^0_X$ implying that at fixed $t\leq u$, $\int_t^u|\chi|(r,X_r(\omega))dV_r$ is not finite for every $\omega \in \Omega$, but only on a set which is $\mathbbm{P}^{s,x}$-negligible for all $(s,x)\in[0,t]\times E$. 
\\
According to Definition 4.1 in \cite{paperAF} $M[\phi]$ 
is an AF whose c\`adl\`ag version  under $\mathbbm{P}^{s,x}$ is $M[\phi]^{s,x}$.
Of course   $M[\psi_i]^{s,x}$ is the c\`adl\`ag version of $M[\psi_i]$  
 under $\mathbbm{P}^{s,x}$.
\\
By Definition \ref{extended}, since 
$\phi\in\mathcal{D}(\mathfrak{a})$, $M[\phi]^{s,x}$ 
is a square integrable martingale 
for every $(s,x)$, so $M[\phi]$ is a square integrable MAF.
 Then by Corollary \ref{H2Vloc},  the AF $\langle M[\psi_i]\rangle$ is absolutely continuous with respect to $dV$.
The existence of $\mathfrak{G}^{\psi_i}(\phi)$ now
 follows from Proposition 4.14  in \cite{paperAF}.
and the uniqueness follows by Proposition \ref{uniquenessupto}.
\end{proof}

\begin{notation}\label{ExtendedGradient}
If \ref{HypBrackPhi} holds, we can  introduce the linear operator
\begin{equation}
\mathfrak{G}^{\psi}:\begin{array}{ccl}
\mathcal{D}(\mathfrak{a})&\longrightarrow&(L^0_X)^d\\
\phi&\longmapsto& (\mathfrak{G}^{\psi_1}(\phi),\cdots,\mathfrak{G}^{\psi_d}(\phi)),
\end{array}
\end{equation}
which will be called the \textbf{extended $\psi$-generalized gradient}.
\end{notation}

\begin{corollary}\label{RExtendedClassical} 
Let $\phi\in\mathcal{D}(\Gamma^{\psi})$. Then $\Gamma^{\psi}(\phi)=\mathfrak{G}^{\psi}(\phi)$ up to zero potential sets.
\end{corollary}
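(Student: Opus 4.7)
The plan is to show that both $\Gamma^{\psi}(\phi)$ and $\mathfrak{G}^{\psi}(\phi)$ serve as densities, with respect to $dV$, of the same angular bracket processes, and then to invoke the uniqueness statement of Proposition \ref{uniquenessupto} component by component.

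First I would observe that, since $\mathcal{D}(\Gamma^{\psi}) \subset \mathcal{D}(a) \subset \mathcal{D}(\mathfrak{a})$ by Remark \ref{Rextendeda}, under Hypothesis \ref{HypBrackPhi} (implicit because $\mathfrak{G}^{\psi}$ itself was defined only under that hypothesis in Notation \ref{ExtendedGradient}) the function $\mathfrak{G}^{\psi_i}(\phi)$ is well-defined for each $i \in [\![ 1;d]\!]$, and by Proposition \ref{P321},
\begin{equation*}
\langle M[\phi]^{s,x}, M[\psi_i]^{s,x}\rangle \;=\; \int_{s}^{\cdot\vee s} \mathfrak{G}^{\psi_i}(\phi)(r,X_r)\,dV_r
\end{equation*}
holds up to indistinguishability in $(\Omega,\mathcal{F}^{s,x},(\mathcal{F}^{s,x}_t),\mathbbm{P}^{s,x})$ for every $(s,x)$. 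On the other hand, since $\phi \in \mathcal{D}(\Gamma^{\psi})$, Proposition \ref{bracketindomain} applies directly and yields, for the very same bracket,
\begin{equation*}
\langle M[\phi]^{s,x}, M[\psi_i]^{s,x}\rangle \;=\; \int_{s}^{\cdot\vee s}\Gamma^{\psi_i}(\phi)(r,X_r)\,dV_r.
\end{equation*}

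Fix $i$. Both $\Gamma^{\psi_i}(\phi)$ and $\mathfrak{G}^{\psi_i}(\phi)$ are elements of $\mathcal{B}([0,T]\times E,\mathbbm{R})$. Moreover they belong to $\mathcal{L}^0_X$: the identity above forces $\int_s^T |\Gamma^{\psi_i}(\phi)|(r,X_r)\,dV_r < \infty$ $\mathbbm{P}^{s,x}$-a.s. for every $(s,x)$ (since the bracket is of integrable variation), and the analogous statement for $\mathfrak{G}^{\psi_i}(\phi)$ is built into its definition. Equating the two expressions for the bracket, the processes $\int_s^{\cdot}\Gamma^{\psi_i}(\phi)(r,X_r)\,dV_r$ and $\int_s^{\cdot}\mathfrak{G}^{\psi_i}(\phi)(r,X_r)\,dV_r$ are indistinguishable under each $\mathbbm{P}^{s,x}$. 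Proposition \ref{uniquenessupto} then gives that $\Gamma^{\psi_i}(\phi) = \mathfrak{G}^{\psi_i}(\phi)$ up to a set of zero potential, and taking the union over $i \in [\![ 1;d]\!]$ of the (finitely many) exceptional sets yields the claim for the $\mathbbm{R}^d$-valued objects.

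There is no real obstacle: the corollary is essentially a bookkeeping step that reconciles the two characterizations of the angular bracket density already established, via the uniqueness result of Proposition \ref{uniquenessupto}.
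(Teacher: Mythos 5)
Your proof is correct and follows exactly the paper's own route: compare Proposition \ref{bracketindomain} with Proposition \ref{P321} to see that both candidate densities give indistinguishable integral processes under every $\mathbbm{P}^{s,x}$, then conclude via Proposition \ref{uniquenessupto}. The only difference is that you spell out the $\mathcal{L}^0_X$ membership and the inclusion $\mathcal{D}(\Gamma^{\psi})\subset\mathcal{D}(\mathfrak{a})$, which the paper leaves implicit.
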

\begin{proof}
Comparing Propositions \ref{bracketindomain} and \ref{P321}, for every $(s,x)\in[0,T]\times E$ and $i\in[\![ 1;d]\!]$,
  $\int_s^{\cdot\vee s}\Gamma^{\psi_i}(\phi)(r,X_r)dV_r$ and $\int_s^{\cdot\vee s}\mathfrak{G}^{\psi_i}(\phi)(r,X_r)dV_r$ are $\mathbbm{P}^{s,x}$-indistinguishable. We can conclude by Proposition \ref{uniquenessupto}.
\end{proof}
$\mathfrak{G}^{\psi}$ therefore extends $\Gamma^{\psi}$  as well as $\mathfrak{a}$ extends $a$, see Remark \ref{Rextendeda}.

\section{Pseudo-PDEs and associated Markovian type BSDEs driven by a c\`adl\`ag martingale}\label{S3}
 
\subsection{The concepts}

In this section, we keep working in the framework of the previous Section \ref{S2}.

We now introduce a subclass of BSDEs driven by a c\`adl\`ag
 martingale which we will call {\bf Markovian}.
 The process $\hat V$  will be the (deterministic)   function $V$ introduced in
  Definition \ref{MartingaleProblem},  the terminal condition $\xi$ will only depend on the final value of the canonical process $X_T$ and the  randomness of the driver $\hat{f}$  at time $t$ will only depend on $X_t$. In other words, the driver will be of type ${\hat f}(t,\omega, y,z) = f(t,X_t(\omega), y, z)$ where 
 $f:[0,T] \times E \times {\mathbb R} \times {\mathbb R}^d \rightarrow {\mathbb R}$ is a measurable function.

Given $d$ functions $\psi_1,\cdots,\psi_d$ in $\mathcal{D}(a)$, we will set $\hat{M}:=(M[\psi_1]^{s,x},\cdots,M[\psi_d]^{s,x})$.

That BSDE will be connected with the deterministic problem in 
Definition \ref{MarkovPDE}.

We fix an integer $d\in\mathbbm{N}^*$ and some functions $\psi_1,\cdots,\psi_d\in\mathcal{D}(a)$ which in the sequel,  will satisfy the following hypothesis.
\begin{hypothesis}\label{HypPhi}
	For any $i\in[\![ 1;d]\!]$ we have the following.
	\begin{itemize}
		\item Hypothesis \ref{HypBrackPhi} holds;
		\item $a(\psi_i)\in\mathcal{L}^2_X$;
		\item $\mathfrak{G}^{\psi_i}(\psi_i)$ is bounded.
	\end{itemize}
\end{hypothesis}
 \begin{proposition} \label{R57}\leavevmode
   Assume that Hypothesis \ref{HypPhi} holds. Then 
  for every $i\in[\![ 1;d]\!]$, we have the following. 
	\begin{itemize}
		\item For any $(s,x)\in[0,T]\times E$,
 $\hat M := M[\psi]^{s,x}$ satisfies item 4.  of Hypothesis \ref{HypBSDE} with respect to $\hat V:=V$.
		\item for every $(s,x)\in[0,T]\times E$, $\underset{t\in[s,T]}{\text{sup }}|\psi_i(t,X_t)|^2$ belongs to $L^1$ under $\mathbbm{P}^{s,x}$;
		\item $\psi_i\in\mathcal{L}^2_X$.
	\end{itemize}
\end{proposition}
\begin{proof}
The first item follows from the fact that, for any $(s,x)\in[0,T]\times E$,
 $\langle M[\psi_i]^{s,x}\rangle=\int_s^{\cdot\vee s}\mathfrak{G}^{\psi_i}(\psi_i)(r,X_r)dV_r$ (see Proposition \ref{P321}), and the fact that 
$\mathfrak{G}^{\psi_i}(\psi_i)$ is bounded. Concerning the second item, for any $(s,x)\in[0,T]\times E$, the martingale problem gives  $\psi_i(\cdot,X)=\psi_i(s,x)+\int_s^{\cdot}a(\psi_i)(r,X_r)dV_r+M[\psi_i]^{s,x}$, see Definition \ref{MartingaleProblem}. By Jensen's inequality, we have $\underset{t\in[s,T]}{\text{sup }}|\psi_i(t,X_t)|^2\leq C(\psi^2_i(s,x)+\int_s^{T}a^2(\psi_i)(r,X_r)dV_r+\underset{t\in[s,T]}{\text{sup }}(M[\psi_i]_t^{s,x})^2)$ for some $C>0$. It is therefore $L^1$ since $a(\psi_i)\in\mathcal{L}^2_X$ and $M[\psi_i]^{s,x}\in\mathcal{H}^2$. The last item is a direct consequence of the second one.
\end{proof}

\begin{definition}\label{MarkovPDE}
Let us consider some $g\in\mathcal{B}(E,\mathbbm{R})$ and 
\\
$f\in\mathcal{B}([0,T]\times E\times\mathbbm{R}\times\mathbbm{R},\mathbbm{R}^d)$.

We will call \textbf{Pseudo-Partial Differential Equation} 
related to $(f,g)$ (in short $Pseudo-PDE(f,g)$) the following equation with final condition:
\begin{equation}\label{PDE}
\left\{
\begin{array}{rccc}
 a(u) + f\left(\cdot,\cdot,u,\Gamma^{\psi}(u)\right)&=&0& \text{ on } [0,T]\times E   \\
 u(T,\cdot)&=&g.& 
\end{array}\right.
\end{equation}
We will say that $u$ is a \textbf{classical solution} of $Pseudo-PDE(f,g)$ if 
  $u, u\psi_i, i\in [\![ 1;d]\!]$  belong to $\mathcal{D}(a)$
and  if $u$   satisfies \eqref{PDE}.
\end{definition}

The connection between a Markovian BSDE
 and
a $Pseudo-PDE(f,g)$, will be possible under a hypothesis 
on some generalized moments on $X$, and some growth conditions
 on the functions $(f,g)$. Those will be related to 
two fixed functions $\zeta,\eta \in\mathcal{B}(E,\mathbbm{R}_+)$.
\begin{hypothesis}\label{HypMom}
The canonical Markov class will be said \textbf{to satisfy}  $H^{mom}(\zeta,\eta)$ if 
\begin{enumerate}
\item for any $(s,x)\in[0,T]\times E$, $\mathbbm{E}^{s,x}[\zeta^2(X_T)]$ is finite;
\item for any $(s,x)\in[0,T]\times E$, $\mathbbm{E}^{s,x}\left[\int_0^T\eta^2(X_r)dV_r\right]$ is finite.
\end{enumerate}
\end{hypothesis}
Until the end of this section, we assume that some $\zeta,\eta$ are given and that the canonical Markov class satisfies $H^{mom}(\zeta,\eta)$.

\begin{hypothesis}\label{Hpq}
A couple $(f,g)$ of functions  $f\in\mathcal{B}([0,T]\times E\times\mathbbm{R}\times\mathbbm{R}^d,\mathbbm{R})$ and $g\in\mathcal{B}(E,\mathbbm{R})$ will be said \textbf{to satisfy}   $H^{lip}(\zeta,\eta)$ if
there exist positive  constants $K^Y,K^Z,C,C'$ such that
\begin{enumerate}
\item $\forall x: \quad |g(x)|\leq C(1+\zeta(x))$,
\item $\forall (t,x):\quad  |f(t,x,0,0)|\leq C'(1+\eta(x))$,
\item $\forall (t,x,y,y',z,z'):\quad  |f(t,x,y,z)-f(t,x,y',z')|\leq K^Y|y-y'|+K^Z\|z-z'\|$.
\end{enumerate}
$(f,g)$ will be said \textbf{to satisfy} $H^{growth}(\zeta,\eta)$ if
 the following more general assumption holds. There exist positive  constants $C,C'$ such that
	\begin{enumerate}
		\item $\forall x: \quad |g(x)|\leq C(1+\zeta(x))$;
		\item $\forall (t,x,y,z):\quad  |f(t,x,y,z)|\leq C'(1+\eta(x)+|y|+\|z\|)$.
	\end{enumerate}
\end{hypothesis}

\begin{remark}\label{MarkovBSDEsol2}
We fix for now a couple $(f,g)$ satisfying $H^{lip}(\zeta,\eta)$.
For any $(s,x)\in[0,T]\times E$, in the stochastic basis 	
$\left(\Omega,\mathcal{F}^{s,x},(\mathcal{F}^{s,x}_t)_{t\in[0,T]},\mathbbm{P}^{s,x}\right)$ and setting $\hat V:= V$, the triplet 
$\xi:=g(X_T)$, $\hat{f}:(t,\omega,y,z)\longmapsto f(t,X_t(\omega),y,z)$, $\hat{M}:=M[\psi]^{s,x}$ satisfies Hypothesis \ref{HypBSDE}.
\end{remark}

With the equation $Pseudo-PDE(f,g)$, we will associate the following family of BSDEs indexed by $(s,x)\in[0,T]\times E$,  driven by a c\`adl\`ag martingale.

\begin{notation} \label{N55bis}
	For any $(s,x)\in[0,T]\times E$, we consider in the stochastic 
basis $\left(\Omega,\mathcal{F}^{s,x},(\mathcal{F}^{s,x}_t)_{t\in[0,T]},\mathbbm{P}^{s,x}\right)$ and on the interval $[0,T]$ the 
$BSDE(\xi,\hat{f}, V,\hat{M})$ where 
$\xi = g(X_T)$, $\hat f: (t,\omega,y,z)\longmapsto f(t,X_t(\omega),y,z)$,
$\hat M = M[\psi]^{s,x})$.  	\\
	From now on that BSDE will be denoted $BSDE^{s,x}(f,g)$ and its unique solution (see Theorem \ref{uniquenessBSDE} and Remark \ref{MarkovBSDEsol2}) will be denoted $(Y^{s,x},M^{s,x})$.
\end{notation}
If $H^{lip}(\zeta,\eta)$ is fulfilled by $(f,g)$, then
$(Y^{s,x},M^{s,x})$ is therefore the unique couple in $\mathcal{L}^2(dV\otimes d\mathbbm{P}^{s,x})\times \mathcal{H}^2_0$ satisfying 
\begin{equation}\label{BSDE}
Y^{s,x}_{\cdot} = g(X_T) + \int_{\cdot}^T f\left(r,X_r,Y^{s,x}_r,\frac{d\langle M^{s,x},M[\psi]^{s,x}\rangle}{dV}(r)\right)dV_r  -(M^{s,x}_T - M^{s,x}_{\cdot}).
\end{equation}

\begin{remark}\label{MarkovBSDEsol}
Even if  the underlying process $X$ admits no generalized  moments,  given 
  a couple  $(f,g)$ such that $f(\cdot,\cdot,0,0)$ and $g$ are bounded, the considerations of this section still apply. In particular  the connections that we will establish between the
 $BSDE^{s,x}(f,g)$ and the corresponding $Pseudo-PDE(f,g)$ still take place.
\end{remark}

Our main contribution consists in  illustrating the precise link
 between  the solutions of equations $BSDE^{s,x}(f,g)$ and those of
  $Pseudo-PDE(f,g)$.
In particular we will emphasize that a solution of $BSDE^{s,x}(f,g)$
 produces a solution of $Pseudo-PDE(f,g)$  and 
reciprocally.

We now introduce a probabilistic notion of solution for $Pseudo-PDE(f,g)$.
\begin{definition}\label{D417}
	A Borel function $u: [0,T] \times E \rightarrow {\mathbbm R}$
	will be said 
	to be a {\bf martingale solution} of $Pseudo-PDE(f,g)$ if 
	$u\in\mathcal{D}(\mathfrak{a})$  and
	\begin{equation}\label{PDEextended}
	\left\{\begin{array}{rcl}
	\mathfrak{a}(u)&=& -f(\cdot,\cdot,u,\mathfrak{G}^{\psi}(u))\\
	u(T,\cdot)&=&g.
	\end{array}\right.
	\end{equation}
\end{definition}

\begin{remark} \label{R417}
	The first equation of \eqref{PDEextended} holds in $L^0_X$, hence up to a zero potential set. The second one is a pointwise equality.
\end{remark}
The following lemma was the object  of Lemma 5.13 in \cite{paper1preprint}.
\begin{lemma}\label{ModifImpliesdV}
	Let $V$ be a non-decreasing function. If two measurable processes are $\mathbbm{P}$-modifications of each other, then they are also equal $dV\otimes d\mathbbm{P}$ a.e.
\end{lemma}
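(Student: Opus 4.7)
The plan is to invoke Fubini--Tonelli on the product measure $dV\otimes d\mathbbm{P}$. Let $X$ and $Y$ be two measurable processes on $\Omega\times[0,T]$ which are $\mathbbm{P}$-modifications of one another, and let
\[
N := \{(\omega,t)\in\Omega\times[0,T] : X_t(\omega)\neq Y_t(\omega)\}.
\]
First I would check measurability: since $X$ and $Y$ are jointly measurable with respect to $\mathcal{F}\otimes\mathcal{B}([0,T])$, the set $N$ lies in this product $\sigma$-field, so it makes sense to ask for its $dV\otimes d\mathbbm{P}$-measure.

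Next I would verify that Fubini--Tonelli applies. The measure $dV$ is a finite positive Borel measure on $[0,T]$, because $V$ is non-decreasing on the bounded interval $[0,T]$ so $dV([0,T])=V(T)-V(0)<\infty$; and $\mathbbm{P}$ is a probability measure. Both factors being finite (hence $\sigma$-finite), Fubini--Tonelli applies to the non-negative integrand $\mathds{1}_N$, giving
\[
(dV\otimes d\mathbbm{P})(N) \;=\; \int_\Omega\!\!\int_0^T \mathds{1}_N(\omega,t)\,dV(t)\,d\mathbbm{P}(\omega) \;=\; \int_0^T \mathbbm{P}\bigl(\{X_t\neq Y_t\}\bigr)\,dV(t).
\]

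Finally, the modification hypothesis says precisely that for every fixed $t\in[0,T]$ one has $\mathbbm{P}(X_t\neq Y_t)=0$. Hence the integrand on the right-hand side is identically zero, the integral vanishes, and $N$ is $dV\otimes d\mathbbm{P}$-null, which is what the lemma asserts. There is no serious obstacle; the only point worth noting is the joint measurability used to legitimize Fubini, which is guaranteed once one works with measurable processes rather than arbitrary families of random variables indexed by $t$.
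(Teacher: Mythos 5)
Your proof is correct and is the standard Fubini--Tonelli argument; the paper itself gives no proof here but defers to Lemma 5.12 of its companion paper, whose argument is exactly this one. The only point worth a passing remark is that $dV$ should be understood as the Lebesgue--Stieltjes measure of $V$ (well defined since in this paper $V$ is continuous and non-decreasing on the compact interval $[0,T]$, hence of finite total mass), which you have in effect already addressed.
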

\begin{proposition} \label{MartImpliesBSDE}
	Let $(f,g)$ satisfy $H^{growth}(\zeta,\eta)$.
	Let $u$ be a martingale solution of $Pseudo-PDE(f,g)$. Then for any $(s,x)\in[0,T]\times E$, the couple of processes
	\begin{equation}
	\left(u(t,X_t),\quad u(t,X_t)-u(s,x)+\int_s^tf(\cdot,\cdot,u,\mathfrak{G}^{\psi}(u))(r,X_r)dV_r\right)_{t \in [s,T]}
	\end{equation}
	has 
a $\mathbbm{P}^{s,x}$-version  which is a solution on $[s,T]$ of $BSDE^{s,x}(f,g)$,
see Remark \ref{BSDESmallInt}.

	Moreover, $u\in\mathcal{L}^2_X$.
\end{proposition}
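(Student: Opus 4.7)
The plan is to identify the candidate martingale in the statement with a $\mathbbm{P}^{s,x}$-version of $M[u]^{s,x}$ and then exploit the martingale-solution identity $\mathfrak{a}(u)=-f(\cdot,\cdot,u,\mathfrak{G}^{\psi}(u))$. Fix $(s,x)\in[0,T]\times E$. Since $u\in\mathcal{D}(\mathfrak{a})$, Definitions \ref{domainextended} and \ref{extended} supply $M[u]^{s,x}\in\mathcal{H}^2_0$ as the cadlag $\mathbbm{P}^{s,x}$-modification of $\mathds{1}_{[s,T]}(t)\bigl(u(t,X_t)-u(s,x)-\int_s^t\mathfrak{a}(u)(r,X_r)dV_r\bigr)$. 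Because $\mathfrak{a}(u)$ and $-f(\cdot,\cdot,u,\mathfrak{G}^{\psi}(u))$ coincide up to a zero-potential set, Proposition \ref{uniquenessupto} identifies, $\mathbbm{P}^{s,x}$-a.s., the integral processes $\int_s^{\cdot}\mathfrak{a}(u)(r,X_r)dV_r$ and $-\int_s^{\cdot}f(r,X_r,u,\mathfrak{G}^{\psi}(u))dV_r$, whence the candidate $M_t:=u(t,X_t)-u(s,x)+\int_s^t f(r,X_r,u,\mathfrak{G}^{\psi}(u))dV_r$, $t\in[s,T]$, has a $\mathbbm{P}^{s,x}$-modification indistinguishable from $M[u]^{s,x}$.

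Evaluating at $t=T$ with the terminal condition $u(T,\cdot)=g$ produces
\[
u(t,X_t)=g(X_T)+\int_t^T f(r,X_r,u,\mathfrak{G}^{\psi}(u))dV_r-(M_T-M_t),\qquad t\in[s,T],
\]
which is the desired BSDE identity. The driver-coupling condition $\frac{d\langle M,M[\psi_i]^{s,x}\rangle}{dV}(r)=\mathfrak{G}^{\psi_i}(u)(r,X_r)$ is then Proposition \ref{P321} applied to $\phi=u$, since $M=M[u]^{s,x}$. Modulo the integrability of $Y_t:=u(t,X_t)$, this exhibits $(Y,M)$ as the solution of $BSDE^{s,x}(f,g)$ on $[s,T]$, in the sense of Remark \ref{BSDESmallInt}.

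The substantive step, and the main obstacle, is the moreover $u\in\mathcal{L}^2_X$, which will then place $Y$ in $\mathcal{L}^{2,cadlag}(dV\otimes d\mathbbm{P}^{s,x})$. The structural input is the pointwise Kunita--Watanabe bound
\[
\bigl(\mathfrak{G}^{\psi_i}(u)(r,X_r)\bigr)^2\le\frac{d\langle M[u]^{s,x}\rangle}{dV}(r)\cdot\mathfrak{G}^{\psi_i}(\psi_i)(r,X_r),
\]
$dV\otimes d\mathbbm{P}^{s,x}$-a.e., which together with the boundedness of $\mathfrak{G}^{\psi_i}(\psi_i)$ (Hypothesis \ref{HypPhi}) yields $\mathbbm{E}^{s,x}\bigl[\int_s^T\|\mathfrak{G}^{\psi}(u)(r,X_r)\|^2 dV_r\bigr]\le K\,\mathbbm{E}^{s,x}[\langle M[u]^{s,x}\rangle_T]<\infty$; Hypotheses \ref{HypMom} and $H^{growth}(\zeta,\eta)$ control $\mathbbm{E}^{s,x}[g(X_T)^2]$ and $\mathbbm{E}^{s,x}\int_s^T\eta^2(X_r)dV_r$. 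I would then apply It\^o's formula to $Y_{t\wedge\tau_n}^2$, localized by $\tau_n:=\inf\{t\ge s:\int_s^t Y_{r^-}^2 d\langle M\rangle_r>n\}\wedge T$ so that $\int Y_{r^-}dM_r$ stopped at $\tau_n$ is a genuine martingale, and use $2|yf|\le y^2+|f|^2\le C'''(1+\eta^2+y^2+\|\mathfrak{G}^{\psi}(u)\|^2)$ to deduce an estimate of the form
\[
\mathbbm{E}^{s,x}\bigl[Y_{t\wedge\tau_n}^2\bigr]\le C_1+C_2\,\mathbbm{E}^{s,x}\Bigl[\int_{t\wedge\tau_n}^{T}Y_r^2\,dV_r\Bigr].
\]
Fatou on the left and monotone convergence on the right as $n\to\infty$, followed by a Gronwall argument against $dV$, give $\sup_{t\in[s,T]}\mathbbm{E}^{s,x}[Y_t^2]<\infty$; integration in $dV$ delivers $u\in\mathcal{L}^2_{s,x}$, and the genericity of $(s,x)$ gives $u\in\mathcal{L}^2_X$. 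The really delicate step is the uniform-in-$n$ passage to the limit in the It\^o estimate; the Kunita--Watanabe reduction of $\mathfrak{G}^{\psi}(u)$ to a quantity controlled by $\langle M[u]^{s,x}\rangle_T$ is the key structural observation that replaces, in this abstract Markovian setting, the explicit diffusion-coefficient bounds used in classical Brownian BSDE theory.
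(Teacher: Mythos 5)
Your first half reproduces the paper's argument: the cadlag modification $U^{s,x}$ of $u(\cdot,X_{\cdot})$ is a special semimartingale with martingale part $M[u]^{s,x}\in\mathcal{H}^2_0$, the substitution of $-f(\cdot,\cdot,u,\mathfrak{G}^{\psi}(u))$ for $\mathfrak{a}(u)$ in the finite-variation part is legitimate by Proposition \ref{uniquenessupto} (the paper cites Lemma \ref{ModifImpliesdV} and Proposition \ref{P321} for the same purpose), and Proposition \ref{P321} identifies $\mathfrak{G}^{\psi}(u)(r,X_r)$ with $\frac{d\langle M[u]^{s,x},M[\psi]^{s,x}\rangle}{dV}(r)$. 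Your Kunita--Watanabe reduction of $\|\mathfrak{G}^{\psi}(u)(\cdot,X_{\cdot})\|^2$ to a quantity dominated by $\langle M[u]^{s,x}\rangle_T$ is also exactly the paper's Lemma \ref{ZinL2}, so the structural observation you single out is the right one.

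The gap is in how you close the integrability estimate. The inequality $\mathbbm{E}^{s,x}[Y_{t\wedge\tau_n}^2]\le C_1+C_2\,\mathbbm{E}^{s,x}\bigl[\int_{t\wedge\tau_n}^{T}Y_r^2\,dV_r\bigr]$ cannot be exploited as you describe, for two reasons. First, the constant $C_1$ must contain the term $\mathbbm{E}^{s,x}[Y_{\tau_n}^2]$ coming from the upper limit of the backward It\^o formula; on the event $\{\tau_n<T\}$ this is not $g(X_T)$ and is not controlled uniformly in $n$ (rewriting $Y_{\tau_n}$ as a conditional expectation reintroduces $\int Y_r^2\,dV_r$ and is circular). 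Second, after $n\to\infty$ the backward Gronwall inequality $\phi(t)\le C_1+C_2\int_t^T\phi\,dV$ with $\phi(t)=\mathbbm{E}^{s,x}[Y_t^2]$ is vacuous unless one already knows $\phi$ is $dV$-integrable, which is precisely the assertion $u\in\mathcal{L}^2_{s,x}$ being proved; you flag this uniform-in-$n$ passage as ``really delicate'' but do not resolve it. The paper avoids both problems by invoking Lemma \ref{LED+Pext}: one applies integration by parts \emph{forward} from $s$ to $t$ to $Y^2e^{-\lambda \hat V}$ and chooses $\lambda$ large enough that the term $-\lambda\int e^{-\lambda \hat V_r}Y_r^2\,dV_r$ absorbs every $Y_r^2\,dV_r$ contribution pathwise, \emph{before} any expectation is taken, yielding $Y_t^2e^{-\lambda \hat V_t}\le Z+2\bigl|\int_s^te^{-\lambda \hat V_r}Y_{r^-}dM_r\bigr|$ with $Z\in L^1$ independent of $t$; a BDG absorption (Lemma 3.16 of \cite{paper1preprint}) then gives $\sup_t|Y_t|\in L^2$ with no Gronwall and no a priori bound on $\mathbbm{E}^{s,x}[\int Y_r^2\,dV_r]$. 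Since your setup already provides the hypotheses of that lemma ($Y_s=u(s,x)$ deterministic, $M[u]^{s,x}\in\mathcal{H}^2_0$, $f$ of linear growth under $H^{growth}(\zeta,\eta)$), the repair is to replace your localization--Gronwall sketch by that forward exponential-weight estimate.
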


\begin{proof}
	Let $u\in\mathcal{D}(\mathfrak{a})$ be a solution of
 \eqref{PDEextended} and let 
	$(s,x)\in[0,T]\times E$ be fixed. By Definition
 \ref{domainextended} and 
Remark	\ref{BSDESmallInt}, the process $u(\cdot,X_{\cdot})$  under $\mathbbm{P}^{s,x}$ admits 
	a c\`adl\`ag modification $U^{s,x}$  on $[s,T]$,
	which is a special semimartingale with decomposition 
	\begin{equation} \label{E531}
	\begin{array}{rcl}
	U^{s,x} &=& u(s,x) + \int_s^{\cdot} \mathfrak{a}(u)(r,X_r)dV_r + M[u]^{s,x} \\
	&=&   u(s,x) - \int_s^{\cdot} f\left(r,X_r,u(r,X_r),\mathfrak{G}^{\psi}(u)(r,X_r)\right)dV_r + M[u]^{s,x} \\
	&=&	u(s,x) - \int_s^{\cdot} f\left(r,X_r,U^{s,x}_r,\frac{d\langle  M[u]^{s,x},M[\psi]^{s,x}\rangle}{dV}(r)\right)dV_r + M[u]^{s,x},
	\end{array}
	\end{equation}  
	where the third equality of \eqref{E531} comes from Lemma \ref{ModifImpliesdV} and  Proposition \ref{P321}.
	Moreover since $u(T,\cdot)=g$, then  $U^{s,x}_T=u(T,X_T)=g(X_T)$ a.s. so 
	the couple  $(U^{s,x}, M[u]^{s,x})$ 
	satisfies the following equation on $[s,T]$ (with respect to $\mathbbm P^{s,x}$):
	\begin{equation} \label{EBSDEweaker}
	U^{s,x}_{\cdot} = g(X_T)+\int_{\cdot}^Tf\left(r,X_r,U^{s,x}_r,\frac{d\langle M[u]^{s,x},M[\psi]^{s,x}\rangle}{dV}(r)\right)dV_r - (M[u]^{s,x}_T-M[u]^{s,x}_{\cdot}).
	\end{equation}
	$M[u]^{s,x}$   (introduced at Definition \ref{extended}) belongs to
	$\mathcal{H}^2_0$ but we do not have a priori information  on the square
	integrability of $U^{s,x}$. However we know that  $M[u]^{s,x}$ is 
	equal to zero at time $s$, and that $U^{s,x}_s$ is deterministic so 
	square integrable. We can therefore apply  Lemma \ref{LED+Pext} which implies that  $(U^{s,x},M[u]^{s,x})$ 
	solves $BSDE^{s,x}(f,g)$ on $[s,T]$. In particular, $U^{s,x}$ belongs to $\mathcal{L}^2(dV\otimes d\mathbbm{P}^{s,x})$ for every $(s,x)$, so by Lemma  \ref{ModifImpliesdV} and Definition \ref{topo}, $u\in\mathcal{L}^2_X$.
\end{proof}

\subsection{Decoupled mild solutions of Pseudo-PDEs}\label{S4}

In this section we introduce an analytical notion of  solution of 
our $Pseudo-PDE(f,g)$, that we will denominate {\it decoupled mild}, 
taking inspiration from  the mild solutions of partial differential equation. 
That notion will be shown to be equivalent to the one of martingale solution introduced in Definition \ref{D417}.
	Let $P = (P_{s,t})$ denote the transition kernel of the canonical Markov
  class, see Definition 3.4 in \cite{paperAF} and also Notation \ref{N513}.


Our notion of decoupled mild solution relies on the fact that the equation 
$a(u) + f\left(\cdot,\cdot,u,\Gamma^{\psi}(u)\right)=0$ 
can be naturally decoupled into 
\begin{equation} \label{E62}
\left\{
\begin{array}{ccl}
a(u) &=& - f(\cdot,\cdot,u,v)\\
v_i &=&  \Gamma^{\psi_i}(u),\quad i\in[\![1;d]\!].
\end{array}\right.
\end{equation}
Then, by definition of the carr\'e du champ operator (see Definition \ref{CarreDuChamp}), we formally have 
  $a(u\psi_i)=\Gamma^{\psi_i}(u)+ua(\psi_i)+\psi_ia(u), i\in[\![ 1;d]\!]$. So 
the  system of equations \eqref{E62} can be rewritten as
\begin{equation}
\left\{
\begin{array}{ccl}
a(u) &=& - f(\cdot,\cdot,u,v)\\
a(u\psi_i) &=& v_i +ua(\psi_i)- \psi_if(\cdot,\cdot,u,v),\quad i\in[\![1;d]\!].
\end{array}\right.
\end{equation}
Inspired by the usual notions of mild solution, this naturally brings
 us to the following definition of a (decoupled) mild solution.

\begin{definition}\label{mildsoluv}
	Assume that $(f,g)$ satisfies $H^{growth}(\zeta,\eta)$. Let 
	\\
	$u\in\mathcal{B}([0,T]\times E,\mathbbm{R})$ and $v\in\mathcal{B}([0,T]\times E,\mathbbm{R}^d)$.
	\begin{enumerate}
		\item  $(u,v)$ is a {\bf solution
			of the identification problem determined by $(f,g)$}
		or simply {\bf solution of} 
		$IP(f,g)$ if 
		$u,v_1,\cdots,v_d$  belong to $\mathcal{L}^2_X$ and if for every $(s,x)\in[0,T]\times E$,
		\begin{equation}\label{MildEq}
		\left\{
		\begin{array}{rcl}
		u(s,x)&=&P_{s,T}[g](x)+\int_s^TP_{s,r}\left[f\left(\cdot,\cdot,u,v\right)(r,\cdot)\right](x)dV_r\\
		u\psi_1(s,x) &=&P_{s,T}[g\psi_1(T,\cdot)](x) -\int_s^TP_{s,r}\left[\left(v_1+ua(\psi_1)-\psi_1f\left(\cdot,\cdot,u,v\right)\right)(r,\cdot)\right](x)dV_r\\
		&\cdots&\\
		u\psi_d(s,x) &=&P_{s,T}[g\psi_d(T,\cdot)](x) -\int_s^TP_{s,r}\left[\left(v_d+ua(\psi_d)-\psi_df\left(\cdot,\cdot,u,v\right)\right)(r,\cdot)\right](x)dV_r.
		\end{array}\right.
		\end{equation}
		\item  $u$ is a {\bf decoupled mild solution}
		of $Pseudo-PDE(f,g)$ if there exists a function $v$
		such that  $(u,v)$ is a solution 
		of $IP(f,g)$.
	\end{enumerate}
\end{definition}

The following lemma is very close to 
 Lemma 3.5 in \cite{paper2} 
and the arguments for the proof are similar.
\begin{lemma}\label{LemmaMild}
	Let $u,v_1,\cdots,v_d\in\mathcal{L}^2_X$, let $(f,g)$ be a couple satisfying $H^{growth}(\zeta,\eta)$ and let $\psi_1,\cdots,\psi_d$ satisfy Hypothesis \ref{HypPhi}.
	Then $f\left(\cdot,\cdot,u,v\right)$ belongs to $\mathcal{L}_X^2$ 
	and for every $i\in[\![ 1;d]\!]$, $\psi_if\left(\cdot,\cdot,u,v\right)$,  and $ua(\psi_i)$,  belong to $\mathcal{L}^1_X$. For any $(s,x)\in[0,T]\times E$, $i\in[\![ 1;d]\!]$, $g(X_T)\psi_i(T,X_T)$ belongs to $L^1$ under $\mathbbm{P}^{s,x}$. In particular, all terms  in \eqref{MildEq} make sense.
\end{lemma}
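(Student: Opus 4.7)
The plan is to prove the four integrability assertions by straightforward combination of the growth bounds from $H^{growth}(\zeta,\eta)$, Cauchy--Schwarz under the potential measure $U(s,x,\cdot)$, and the moment/regularity assumptions already packaged in $H^{mom}(\zeta,\eta)$ and Proposition \ref{R57}. The inclusions to establish are $f(\cdot,\cdot,u,v)\in\mathcal{L}^2_X$, then $\psi_i f(\cdot,\cdot,u,v)\in\mathcal{L}^1_X$ and $u a(\psi_i)\in\mathcal{L}^1_X$ for each $i$, and finally $g(X_T)\psi_i(T,X_T)\in L^1(\mathbb{P}^{s,x})$.

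First I would use $|f(t,x,y,z)|^2\leq C(1+\eta^2(x)+|y|^2+\|z\|^2)$ to bound $\mathbb{E}^{s,x}\!\int_s^T\!|f(r,X_r,u,v)|^2\,dV_r$ by a sum of four terms: a constant multiple of $V_T$ (finite because $V$ is continuous on the compact $[0,T]$), $C\,\mathbb{E}^{s,x}\!\int_s^T\!\eta^2(X_r)\,dV_r$ (finite by the second item of $H^{mom}$), and $C\|u\|_{2,s,x}^2+C\sum_i\|v_i\|_{2,s,x}^2$ (finite because $u,v_1,\ldots,v_d\in\mathcal{L}^2_X$). For the $\mathcal{L}^1_X$ claims, one application of Cauchy--Schwarz under $U(s,x,\cdot)$ gives
\[
\|\psi_i f(\cdot,\cdot,u,v)\|_{1,s,x}\leq \|\psi_i\|_{2,s,x}\,\|f(\cdot,\cdot,u,v)\|_{2,s,x},\qquad \|u a(\psi_i)\|_{1,s,x}\leq \|u\|_{2,s,x}\,\|a(\psi_i)\|_{2,s,x}.
\]
The right-hand side of the first is finite by the third item of Proposition \ref{R57} combined with the bound just obtained; the second is finite by hypothesis on $u$ together with $a(\psi_i)\in\mathcal{L}^2_X$ from Hypothesis \ref{HypPhi}.

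Finally, for the terminal term, another Cauchy--Schwarz under $\mathbb{P}^{s,x}$ separates the two factors. The expectation of $g^2(X_T)$ is finite because $|g|\leq C(1+\zeta)$ and $\mathbb{E}^{s,x}[\zeta^2(X_T)]<\infty$ by the first item of $H^{mom}$, while $\mathbb{E}^{s,x}[\psi_i^2(T,X_T)]\leq \mathbb{E}^{s,x}[\sup_{t\in[s,T]}\psi_i^2(t,X_t)]$ is finite by the second item of Proposition \ref{R57}. Once these four bounds are in hand, every integrand appearing in \eqref{MildEq} is integrable in the appropriate sense and all the kernel evaluations $P_{s,r}[\cdot]$ and $P_{s,T}[\cdot]$ are well defined, which is the content of the final assertion. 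There is no genuine obstacle: the main care is simply to match each term of \eqref{MildEq} with the precise hypothesis that controls it, in particular using $H^{mom}$ (rather than any pointwise bound) to handle the $\zeta$- and $\eta$-factors.
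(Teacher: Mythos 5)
Your proof is correct and follows essentially the same route as the paper: bound $f^2(\cdot,\cdot,u,v)$ via the growth condition and control the resulting terms with $H^{mom}(\zeta,\eta)$ and the $\mathcal{L}^2_X$ assumptions, then handle the product terms $\psi_i f$, $u\,a(\psi_i)$ and $g(X_T)\psi_i(T,X_T)$ by pairing square-integrable factors (the paper invokes $2|ab|\leq a^2+b^2$ where you invoke Cauchy--Schwarz, which is the same estimate). Your write-up is simply more explicit about which item of Proposition \ref{R57} and Hypothesis \ref{HypPhi} controls each factor.
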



\begin{proposition}\label{MartingaleImpliesMild} 
	Let $(f,g)$ satisfy $H^{growth}(\zeta,\eta)$.
	Let $u$ be a martingale solution of $Pseudo-PDE(f,g)$, then $(u,\mathfrak{G}^{\psi}(u))$ is a
	solution of $IP(f,g)$ and in particular, $u$ is a decoupled mild solution of $Pseudo-PDE(f,g)$.
\end{proposition}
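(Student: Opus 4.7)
Given a martingale solution $u$, set $v := \mathfrak{G}^{\psi}(u)$. Fix $(s,x) \in [0,T]\times E$ and work in the basis $(\Omega, \mathcal{F}^{s,x}, (\mathcal{F}^{s,x}_t), \mathbbm{P}^{s,x})$. The square integrability needed in Definition \ref{mildsoluv} is almost immediate: $u \in \mathcal{L}^2_X$ is part of the conclusion of Proposition \ref{MartImpliesBSDE}, and for each $i$ the Kunita--Watanabe inequality applied to the Radon--Nikodym densities of $\langle M[u]^{s,x}\rangle$ and $\langle M[\psi_i]^{s,x}\rangle$ with respect to $dV$ gives
\[
|\mathfrak{G}^{\psi_i}(u)|^2(r, X_r) \leq \frac{d\langle M[u]^{s,x}\rangle}{dV}(r)\cdot \mathfrak{G}^{\psi_i}(\psi_i)(r, X_r) \qquad dV\otimes d\mathbbm{P}^{s,x}\text{-a.e.}
\]
Since $\mathfrak{G}^{\psi_i}(\psi_i)$ is bounded by Hypothesis \ref{HypPhi} and $M[u]^{s,x} \in \mathcal{H}^2_0$, integration yields $v_i \in \mathcal{L}^2_X$. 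With $H^{growth}(\zeta,\eta)$ this places us in the setting of Lemma \ref{LemmaMild}, so every term in \eqref{MildEq} is well defined.

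For the first equation of \eqref{MildEq}, I would take $\mathbbm{P}^{s,x}$-expectations in the BSDE \eqref{EBSDEweaker} evaluated at the initial time $s$: the increment $M[u]^{s,x}_T - M[u]^{s,x}_s$ has zero mean because $M[u]^{s,x} \in \mathcal{H}^2_0$, and $U^{s,x}_s = u(s,x)$ is deterministic. Fubini (legitimate by Lemma \ref{LemmaMild}) together with the definition of $P_{s,r}$ in Notation \ref{N513} immediately gives
\[
u(s,x) = P_{s,T}[g](x) + \int_s^T P_{s,r}\bigl[f(\cdot, \cdot, u, v)(r, \cdot)\bigr](x)\, dV_r.
\]

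For each $i \in [\![1;d]\!]$, I would apply Itô's product rule to the two cadlag special semimartingales $U^{s,x}$ and $\psi_i(\cdot, X_\cdot)$ on $[s,T]$. Using the decompositions
\[
dU^{s,x}_r = -f(r, X_r, U^{s,x}_r, v(r, X_r))\, dV_r + dM[u]^{s,x}_r, \qquad d\psi_i(r, X_r) = a(\psi_i)(r, X_r)\, dV_r + dM[\psi_i]^{s,x}_r,
\]
the continuity of $V$ (so the quadratic covariation reduces to $[M[u]^{s,x}, M[\psi_i]^{s,x}]$), and the identification of the predictable bracket via Proposition \ref{P321}, integration over $[s,T]$ and rearrangement give
\[
g(X_T)\psi_i(T, X_T) - u(s,x)\psi_i(s, x) = \int_s^T\!\!\bigl(U^{s,x}_r a(\psi_i) - \psi_i f(\cdot,\cdot,u,v) + \mathfrak{G}^{\psi_i}(u)\bigr)(r, X_r)\,dV_r + N_T,
\]
where $N$ collects the two stochastic integrals against $M[\psi_i]^{s,x}$ and $M[u]^{s,x}$. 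Taking $\mathbbm{P}^{s,x}$-expectations, using Lemma \ref{ModifImpliesdV} to replace $U^{s,x}_r$ by $u(r, X_r)$ inside the $dV\otimes d\mathbbm{P}^{s,x}$-integral, and applying Fubini yields precisely the $i$-th equation of \eqref{MildEq}.

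\textbf{Main obstacle.} The delicate point is justifying that $N$ is a true martingale (not merely a local martingale) on $[s,T]$, so that $\mathbbm{E}^{s,x}[N_T] = 0$. This requires controlling the integrands of the two stochastic integrals in $L^2$ against $d\langle M[\psi_i]^{s,x}\rangle$ and $d\langle M[u]^{s,x}\rangle$ respectively. For the first, one uses Proposition \ref{R57} together with the fact that $\underset{r\in[s,T]}{\sup}|U^{s,x}_r| \in L^2$ (a consequence of $U^{s,x} \in \mathcal{H}^2$-type estimates coming from the BSDE); for the second, boundedness of $\mathfrak{G}^{\psi_i}(\psi_i)$ and Proposition \ref{R57} give $\underset{r\in[s,T]}{\sup}|\psi_i(r,X_r)| \in L^2$. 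A standard Burkholder--Davis--Gundy and localization argument then turns both integrals into uniformly integrable martingales with zero expectation, closing the identity.
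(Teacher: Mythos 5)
Your proposal is correct and follows essentially the same route as the paper: square integrability of $u$ and of $\mathfrak{G}^{\psi_i}(u)$ via Proposition \ref{MartImpliesBSDE} and a Kunita--Watanabe type bound (the paper's Lemma \ref{ZinL2} combined with Proposition \ref{P321}), taking the expectation of the BSDE for the first line of \eqref{MildEq}, and integration by parts plus a true-martingale verification of the stochastic integrals for the remaining lines. The only bookkeeping point is that in your displayed identity the remainder $N$ must also absorb $[M[u]^{s,x},M[\psi_i]^{s,x}]-\langle M[u]^{s,x},M[\psi_i]^{s,x}\rangle$ (a uniformly integrable martingale since both factors lie in $\mathcal{H}^2$), because Proposition \ref{P321} identifies the angle bracket and not the square bracket; the paper makes this replacement explicit when passing to expectations.
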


\begin{proof}
	Let $u$ be a martingale solution of $Pseudo-PDE(f,g)$. By Proposition \ref{MartImpliesBSDE}, $u\in\mathcal{L}^2_X$. Taking into account Definition \ref{extended}, for every $(s,x)$,  $M[u]^{s,x}\in\mathcal{H}^2_0$ under $\mathbbm{P}^{s,x}$.
	So by Lemma \ref{ZinL2},  for any $i\in[\![ 1;d]\!]$,  $\frac{d\langle M[u]^{s,x},M[\psi^i]^{s,x}\rangle}{dV}$ 
	belongs to  $\mathcal{L}^2(dV\otimes d\mathbbm{P}^{s,x})$. By use of 
Proposition \ref{P321}, this means that $\mathfrak{G}^{\psi_i}(u)\in\mathcal{L}^2_X$ for every $i$.    
	By Lemma \ref{LemmaMild}, it follows that $f\left(\cdot,\cdot,u,\mathfrak{G}^{\psi}(u)\right)$ belongs to
	$\mathcal{L}_X^2$ and so for any $i\in[\![ 1;d]\!]$, $\psi_if\left(\cdot,\cdot,u,\mathfrak{G}^{\psi}(u)\right)$ and $ua(
	\psi_i)$,  belong to $\mathcal{L}^1_X$.
	
	Let $(s,x)\in[0,T]\times E$. 
Below we demonstrate that
	\begin{equation}\label{MildEqAux}
	\left\{
	\begin{array}{rcl}
	u(s,x)&=&P_{s,T}[g](x)+\int_s^TP_{s,r}\left[f\left(\cdot,\cdot,u,\mathfrak{G}^{\psi}(u)\right)(r,\cdot)\right](x)dV_r\\
	u\psi_1(s,x) &=&P_{s,T}[g\psi_1(T,\cdot)](x) -\int_s^TP_{s,r}\left[\left(\mathfrak{G}(u,\psi_1)+ua(\psi_1)-\psi_1f\left(\cdot,\cdot,u,\mathfrak{G}^{\psi}(u)\right)\right)(r,\cdot)\right](x)dV_r\\
	&\cdots&\\
	u\psi_d(s,x) &=&P_{s,T}[g\psi_d(T,\cdot)](x) -\int_s^TP_{s,r}\left[\left(\mathfrak{G}(u,\psi_d)+ua(\psi_d)-\psi_df\left(\cdot,\cdot,u,\mathfrak{G}^{\psi}(u)\right)\right)(r,\cdot)\right](x)dV_r.
	\end{array}\right.
	\end{equation}

We refer now to the probability  $\mathbbm{P}^{s,x}$: 
	by Definitions \ref{domainextended}, \ref{extended} and \ref{D417}, 
the process $u(\cdot,X_{\cdot})$ admits a modification  $U^{s,x}$ being a special semimartingale with decomposition
	\begin{equation}\label{decompoU}
	U^{s,x}=u(s,x)-\int_s^{\cdot}f\left(\cdot,\cdot,u,\mathfrak{G}^{\psi}(u)\right)(r,X_r)dV_r +M[u]^{s,x},
	\end{equation}
	and $M[u]^{s,x}\in\mathcal{H}^2_0$.
	
	Definition \ref{D417} also states that $u(T,\cdot)=g$, so
	\begin{equation}
	u(s,x)=g(X_T)+\int_s^Tf\left(\cdot,\cdot,u,\mathfrak{G}^{\psi}(u)\right)(r,X_r)dV_r -M[u]^{s,x}_T\text{ a.s.}
	\end{equation}
	By Fubini's theorem we deduce that
	\begin{equation}
	\begin{array}{rcl}
	u(s,x) &=&\mathbbm{E}^{s,x}\left[g(X_T) +\int_s^T f\left(\cdot,\cdot,u,\mathfrak{G}^{\psi}(u)\right)(r,X_r)dV_r\right]\\
	&=&P_{s,T}[g](x)+\int_s^TP_{s,r}\left[f\left(r,\cdot,u(r,\cdot),\mathfrak{G}^{\psi}(u)(r,\cdot)\right)\right](x)dV_r.
	\end{array}
	\end{equation}
	We now fix $i\in[\![ 1;d]\!]$. By integration by parts, taking \eqref{decompoU} and Definition \ref{MartingaleProblem} into account, we obtain
	\begin{equation}
	\begin{array}{rcl}
	d(U^{s,x}_t\psi_i(t,X_t))&=&-\psi_i(t,X_t)f\left(\cdot,\cdot,u,\mathfrak{G}^{\psi}(u)\right)(t,X_t)dV_t+\psi_i(t^-,X_{t^-})dM[u]^{s,x}_t\\
	&& +U^{s,x}_ta(\psi_i)(t,X_t)dV_t+U^{s,x}_{t^-}dM[\psi_i]^{s,x}_t+d[M[u]^{s,x},M[\psi_i]^{s,x}]_t,
	\end{array}	
	\end{equation} 
	Integrating between $s$ and $T$, 
	\begin{equation}
	\begin{array}{rcl}\label{Eq322}
	&&u\psi_i(s,x)\\
	&=& g(X_T)\psi_i(T,X_T)+\int_s^T\psi_i(t,X_t)f\left(\cdot,\cdot,u,\mathfrak{G}^{\psi}(u)\right)(r,X_r)dV_r-\int_s^T\psi_i(r^-,X_{r^-})dM[u]^{s,x}_r\\
	&& -\int_s^TU^{s,x}_ta(\psi_i)(r,X_r)dV_r-\int_s^TU^{s,x}_{r^-}dM[\psi_i]^{s,x}_r-[M[u]^{s,x},M[\psi_i]^{s,x}]_T\\
	&=& g(X_T)\psi_i(T,X_T)-\int_s^T\left(ua(\psi_i)-\psi_if\left(\cdot,\cdot,u,\mathfrak{G}^{\psi}(u)\right)\right)(r,X_r)dV_r-\int_s^T\psi_i(r^-,X_{r^-})dM[u]^{s,x}_r\\
	&& -\int_s^TU^{s,x}_{r^-}dM[\psi_i]^{s,x}_r-[M[u]^{s,x},M[\psi_i]^{s,x}]_T,\\
	\end{array}
	\end{equation}
	thanks to Lemma \ref{ModifImpliesdV}.
	
	By Proposition \ref{P321}, $\langle M[\psi_i]^{s,x}\rangle=\int_s^{\cdot\vee s}\mathfrak{G}^{\psi_i}(\psi_i)(r,X_r)dV_r$. 
	So the angular bracket of $\int_s^{\cdot}U^{s,x}_{r^-}dM[\psi_i]^{s,x}_r$ at time $T$ is equal to  $\int_s^Tu^2\mathfrak{G}^{\psi_i}(\psi_i)(r,X_r)dV_r$ which is an integrable r.v. since $\mathfrak{G}^{\psi_i}(\psi_i)$ is bounded and $u\in\mathcal{L}^2_X$. Therefore $\int_s^{\cdot}U^{s,x}_{r^-}dM[\psi_i]^{s,x}_r$  is a square integrable martingale.
	
	Then, by Hypothesis \ref{HypPhi} and Proposition \ref{R57}, 
 $\underset{t\in[s,T]}{\text{sup }}|\psi_i(t,X_t)|^2\in L^1$, and by Definition \ref{extended}, $M[u]^{s,x} \in \mathcal{H}^2$ so by Lemma 3.17 in 
\cite{paper1preprint}, $\int_s^{\cdot}\psi_i(r^-,X_{r^-})dM[u]^{s,x}_r$ is a martingale.
	
We can now perform the expectation in \eqref{Eq322}, to get  
	\begin{equation}
	\begin{array}{rcl}
	&&u\psi_i(s,x)\\
	&=&\mathbbm{E}^{s,x}\left[g(X_T)\psi_i(T,X_T)-\int_s^T\left(ua(\psi_i)-\psi_if\left(\cdot,\cdot,u,\mathfrak{G}^{\psi}(u)\right)\right)(r,X_r)dV_r -[M[u]^{s,x},M[\psi_i]^{s,x}]_T\right]\\
	&=&\mathbbm{E}^{s,x}\left[g(X_T)\psi_i(T,X_T)-\int_s^T\left(ua(\psi_i)+\mathfrak{G}^{\psi_i}(u)-\psi_if\left(\cdot,\cdot,u,\mathfrak{G}^{\psi}(u)\right)\right)(r,X_r)dV_r\right],
	\end{array}
	\end{equation}
	since $u$ and $\psi_i$ belong to $\mathcal{D}(\mathfrak{a})$.
Indeed  the second equality follows from the fact
  $[M[u]^{s,x},M[\psi_i]^{s,x}] - \langle M[u]^{s,x},M[\psi_i]^{s,x} \rangle$
is a martingale  and Proposition \ref{P321}.

 Since we have assumed that $u\in\mathcal{L}^2_X$,
	Lemma \ref{LemmaMild} says that $f\left(\cdot,\cdot,u,\mathfrak{G}^{\psi}(u)\right)\in\mathcal{L}^2_X$, Hypothesis \ref{HypPhi} implies that $\psi_i$ and $a(\psi_i)$ are in $\mathcal{L}^2_X$, so all terms in the integral
inside the expectation in the third line belong to $\mathcal{L}^1_X$.
 We can therefore apply Fubini's theorem to get 
	\begin{equation}
	u\psi_i(s,x)=P_{s,T}[g\psi_i(T,\cdot)](x)-\int_s^TP_{s,r}\left[\left(ua(\psi_i)+\mathfrak{G}^{\psi_i}(u)-\psi_if\left(\cdot,\cdot,u,\mathfrak{G}^{\psi}(u)\right)\right)(r,\cdot)\right](x)dV_r.
	\end{equation}
	This concludes the proof.
\end{proof}

Proposition \ref{MartingaleImpliesMild} admits a converse implication.

\begin{proposition}\label{MildImpliesMartingale} 
	Let $(f,g)$ satisfy $H^{growth}(\zeta,\eta)$, then
	every decoupled mild solution of $Pseudo-PDE(f,g)$ is  a martingale solution.   Moreover, if $(u,v)$ solves $IP(f,g)$, then $v=\mathfrak{G}^{\psi}(u)$, up to zero potential sets.
\end{proposition}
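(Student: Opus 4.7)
The plan is to extract from the mild system \eqref{MildEq}, via the Markov property, two analytical facts: (a) $u\in\mathcal{D}(\mathfrak{a})$ with $\mathfrak{a}(u)=-f(\cdot,\cdot,u,v)$; (b) $v_i=\mathfrak{G}^{\psi_i}(u)$ for each $i$, up to zero-potential sets. Substituting (b) into the equation of (a) yields the martingale solution property, while the terminal condition $u(T,\cdot)=g$ follows at once by setting $s=T$ in the first line of \eqref{MildEq}.

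For (a), I would fix $(s,x)$ and introduce the uniformly integrable martingale
\[
N_t:=\mathbbm{E}^{s,x}\!\left[g(X_T)+\int_s^T f(\cdot,\cdot,u,v)(r,X_r)\,dV_r\,\Big|\,\mathcal{F}^{s,x}_t\right],\quad t\in[s,T].
\]
Lemma \ref{LemmaMild} combined with Cauchy--Schwarz, $H^{mom}(\zeta,\eta)$ and the finiteness of $V_T$ should show that the terminal variable lies in $L^2(\mathbbm{P}^{s,x})$, hence $N\in\mathcal{H}^2$. Splitting the integral at $t$, applying the Markov property (Notation \ref{N513}), and recognizing the first line of \eqref{MildEq} at the point $(t,X_t)$, one should identify
\[
N_t=u(t,X_t)+\int_s^t f(\cdot,\cdot,u,v)(r,X_r)\,dV_r,
\]
so that $N-N_s$ is the cadlag $\mathcal{H}^2_0$-modification required by Definition \ref{domainextended}.

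For (b), I would first apply the same conditional-expectation/Markov/mild-equation manipulation to the $i$-th line of \eqref{MildEq}, where only $L^1$-integrability is needed (guaranteed by Lemma \ref{LemmaMild}), to obtain that $u\psi_i(t,X_t)-\int_s^t\bigl(v_i+u\,a(\psi_i)-\psi_i f(\cdot,\cdot,u,v)\bigr)(r,X_r)\,dV_r$ admits a cadlag martingale modification on $[s,T]$. In parallel, denoting by $U^{s,x}$ the cadlag modification of $u(\cdot,X_\cdot)$ produced in step (a), with decomposition $U^{s,x}=u(s,x)-\int_s^\cdot f(\cdot,\cdot,u,v)(r,X_r)\,dV_r+M[u]^{s,x}$, and using $\psi_i(\cdot,X_\cdot)=\psi_i(s,x)+\int_s^\cdot a(\psi_i)(r,X_r)\,dV_r+M[\psi_i]^{s,x}$, I would compute $U^{s,x}_t\psi_i(t,X_t)$ by integration by parts, following the scheme of \eqref{Eq322}. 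Continuity of $V$ together with Lemma \ref{ModifImpliesdV} lets me replace $U^{s,x}_{r^-}$ by $u(r,X_r)$ inside $dV$-integrals, and Proposition \ref{P321} permits the substitution $\langle M[u]^{s,x},M[\psi_i]^{s,x}\rangle=\int_s^\cdot\mathfrak{G}^{\psi_i}(u)(r,X_r)\,dV_r$. Equating the two decompositions (which are indistinguishable as both are cadlag modifications of $u\psi_i(\cdot,X_\cdot)$) should force $\int_s^\cdot(v_i-\mathfrak{G}^{\psi_i}(u))(r,X_r)\,dV_r$ to be simultaneously a continuous process of bounded variation vanishing at $s$ and a local martingale, hence identically zero; Proposition \ref{uniquenessupto} then delivers $v_i=\mathfrak{G}^{\psi_i}(u)$ up to zero-potential sets.

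The hard part will be the bookkeeping of the integration by parts: one has to verify that every stochastic integral appearing on the right-hand side is a genuine local (or true) martingale, so that no implicit drift is misattributed. As in the proof of Proposition \ref{MartingaleImpliesMild}, this will rest on Hypothesis \ref{HypPhi}, the boundedness of $\mathfrak{G}^{\psi_i}(\psi_i)$, the bound $\sup_t|\psi_i(t,X_t)|^2\in L^1$ from Proposition \ref{R57}, the square-integrability $u\in\mathcal{L}^2_X$ and $M[u]^{s,x}\in\mathcal{H}^2_0$ from step (a), and the martingale argument of Lemma 3.15 in \cite{paper1preprint}.
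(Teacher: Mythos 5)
Your proposal is correct and follows essentially the same route as the paper: the first line of \eqref{MildEq} plus the Markov property produces the square-integrable martingale modification required by Definition \ref{domainextended} (giving $u\in\mathcal{D}(\mathfrak{a})$ and $\mathfrak{a}(u)=-f(\cdot,\cdot,u,v)$), and the identification $v_i=\mathfrak{G}^{\psi_i}(u)$ is obtained exactly as in the paper by comparing the special semimartingale decomposition coming from the $(i+1)$-th line of \eqref{MildEq} with the one produced by integration by parts on $U^{s,x}\psi_i(\cdot,X_\cdot)$, Proposition \ref{P321} and Proposition \ref{uniquenessupto} then converting the equality of the finite-variation parts into the stated identity up to zero-potential sets. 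The only cosmetic difference is that the paper first derives $\langle M^{s,x}[u],M^{s,x}[\psi_i]\rangle=\int_s^{\cdot\vee s}v_i(r,X_r)\,dV_r$ and only then invokes Proposition \ref{P321}, whereas you substitute it earlier; both are valid.
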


\begin{proof} 
	Let $u$ and $v_i$, $i\in[\![ 1;d]\!]$  in $\mathcal{L}^2_X$ satisfy \eqref{MildEq}.  We observe
	that the first line of \eqref{MildEq} with $s=T$, implies that $u(T,\cdot)=g$. 

	Let $(s,x)\in[0,T]\times E$ be fixed. We will now work under the probability $\mathbbm{P}^{s,x}$. On $[s,T]$, we set $U:=u(\cdot,X)$ and $N:=u(\cdot,X)-u(s,x)+\int_s^{\cdot}f(r,X_r,u(r,X_r),v(r,X_r))dV_r$.
	
	For some $t\in[s,T]$, we combine the first line of \eqref{MildEq} applied with $(s,x)=(t,X_t)$ 
and the Markov property, see e.g. (3.4) in \cite{paperAF}.
 Since $f\left(\cdot,\cdot,u,v\right)$ belongs to $\mathcal{L}^2_X$ (see Lemma \ref{LemmaMild}) we a.s. have that
	\begin{equation}
	\begin{array}{rcl}
	U_t &=& u(t,X_t) \\
	&=& P_{t,T}[g](X_t) + \int_t^TP_{t,r}\left[f\left(r,\cdot,u(r,\cdot),v(r,\cdot)\right)\right](X_t)dV_r\\
	&=& \mathbbm{E}^{t,X_t}\left[g(X_T)+\int_t^T f(r,X_r,u(r,X_r),v(r,X_r))dV_r\right] \\
	&=& \mathbbm{E}^{s,x}\left[g(X_T)+\int_t^T f(r,X_r,u(r,X_r),v(r,X_r))dV_r|\mathcal{F}_t\right],
	\end{array}
	\end{equation}
	so
	$N_t=\mathbbm{E}^{s,x}\left[g(X_T)+\int_s^T f(r,X_r,u(r,X_r),v(r,X_r))dV_r|\mathcal{F}_t\right]-u(s,x)$ a.s. hence $N$  is a martingale. Let $N^{s,x}$ denote its c\`adl\`ag version which we extend on   $[0,s]$ with the value 0. Then 
	\begin{equation}\label{E5000}
	U^{s,x} := u(s,x)- \int_s^{\cdot} f(r,X_r,u(r,X_r),v(r,X_r))dV_r + N^{s,x},
	\end{equation}
indexed on $[s,T]$ is a c\`adl\`ag  version of $U$. Proceeding as in
the proof of Proposition 3.8 in \cite{paper2}, we can show that
 $N^{s,x}$ is a square integrable martingale.
%
%
The process
 $\left(u(\cdot,X_{\cdot})-u(s,x)+\int_s^{\cdot}f(r,X_r,u(r,X_r),v(r,X_r))dV_r\right)\mathds{1}_{[s,T]}$
therefore admits for any $(s,x)$ a $\mathbbm{P}^{s,x}$-modification in $\mathcal{H}^2_0$ . By  Definitions \ref{domainextended}, \ref{extended} this means that  $u\in\mathcal{D}(\mathfrak{a})$, 
	$\mathfrak{a}(u)=-f(\cdot,\cdot,u,v)$ and 
 for any $(s,x)\in[0,T]\times E$, $M[u]^{s,x}=N^{s,x}$ $P^{s,x}$-a.s.
	
	We are left to show  $\mathfrak{G}^{\psi}(u)=v$, up to zero potential
 sets, hence that for every $(s,x)\in[0,T]\times E$ and $i\in[\![ 1;d]\!]$,  
	\begin{equation} \label{P3210}
	\langle M^{s,x}[u],M^{s,x}[\psi_i]\rangle = \int_s^{\cdot\vee s}v_i(r,X_r)dV_r,\quad\text{a.s.}
\end{equation} 
	Let $(s,x)\in[0,T]\times E,$ 
and  $i\in[\![ 1;d]\!]$ .
 Combining the 
	$(i+1)$-th line of \eqref{MildEq} applied to $(s,x)=(t,X_t)$ and the Markov property and the Markov property (see e.g. (3.4) in \cite{paperAF}),
	taking into account the fact that
	all terms belong to $\mathcal{L}^1_X$ (see Lemma \ref{LemmaMild}, Hypothesis \ref{HypPhi}) we a.s. have
	\begin{equation} \label{E522}
	\begin{array}{rcl}
	u\psi_i(t,X_t) &=& P_{t,T}[g\psi_i(T,\cdot)](X_t) - \int_t^TP_{t,r}\left[\left(v_i+ua(\psi_i)-\psi_if\left(\cdot,\cdot,u,v\right)\right)(r,\cdot)\right](X_t)dV_r\\
	&=& \mathbbm{E}^{t,X_t}\left[g(X_T)\psi_i(T,X_T)-\int_t^T\left(v_i+ua(\psi_i)-\psi_if\left(\cdot,\cdot,u,v\right)\right)(r,X_r)dV_r\right] \\
	&=& \mathbbm{E}^{s,x}\left[g(X_T)\psi_i(T,X_T)-\int_t^T\left(v_i+ua(\psi_i)-\psi_if\left(\cdot,\cdot,u,v\right)\right)(r,X_r)dV_r|\mathcal{F}_t\right].
	\end{array}
	\end{equation}
Setting, for $t \in [s,T]$,
	$N^i_t:=u\psi_i(t,X_t)
-\int_s^t(v_i+ua(\psi)^i-\psi_if(\cdot,\cdot,u,v))(r,X_r)dV_r$,
from \eqref{E522}
 we deduce that, for any $t\in[s,T]$,
	$$N^i_t=\mathbbm{E}^{s,x}\left[g(X_T)\psi_i(T,X_T)-\int_s^T \left(v_i+ua(\psi_i)-\psi_if\left(\cdot,\cdot,u,v\right)\right)(r,X_r)dV_r
\middle|\mathcal{F}_t\right]
$$
 a.s.
	So $N^i$  is a martingale. Let $N^{i,s,x}$ denote its c\`adl\`ag $\mathbbm{P}^{s,x}$-modification. The process 
\begin{equation} \label{EV1}	
\int_s^{\cdot}\left(v_i+ua(\psi_i)-\psi_if\left(\cdot,\cdot,u,v\right)\right)(r,X_r)dV_r + N^{i,s,x},
\end{equation}
 is  a c\`adl\`ag $\mathbbm{P}^{s,x}$-version of $u\psi_i(\cdot,X)$ on $[s,T]$.
 But we have by \eqref{E5000}, that   
	\\
	$U^{s,x}=u(s,x)- \int_s^{\cdot} f(r,X_r,u(r,X_r),v(r,X_r))dV_r + N^{s,x}$ is a version of $u(\cdot,X)$, hence by integration by parts on  $U^{s,x}\psi_i(\cdot,X_{\cdot})$  that 
	\begin{equation} \label{E523}
	\begin{array}{l}
	u\psi_i(s,x)+\int_s^{\cdot}U^{s,x}_ra(\psi_i)(r,X_r)dV_r+\int_s^{\cdot}U^{s,x}_{r^-}dM^{s,x}[\psi_i]_r\\
	-\int_s^{\cdot}\psi_if(\cdot,\cdot,u,v)(r,X_r)dV_r+\int_s^{\cdot}\psi_i(r^-,X_{r^-})dM^{s,x}[u]_r+[M^{s,x}[u],M^{s,x}[\psi_i]]
	\end{array}
	\end{equation}
	is another c\`adl\`ag  semimartingale which
 is a $\mathbbm{P}^{s,x}$-version of $u\psi_i(\cdot,X)$ on $[s,T]$.
Now \eqref{E523} equals 
\begin{equation} \label{EMV}
{\mathcal M^i} + {\mathcal V^i},
\end{equation}
where 
\begin{eqnarray*} 
{\mathcal M}^i_t &=& u \psi_i(s,x) + \int_s^{t}U^{s,x}_{r^-}dM^{s,x}[\psi_i]_r+\int_s^{t}\psi_i(r^-,X_{r^-})dM^{s,x}[u]_r \\
&+&([M^{s,x}[u],M^{s,x}[\psi_i]]_t-\langle M^{s,x}[u],M^{s,x}[\psi_i]\rangle_t,
\end{eqnarray*}
is a local martingale and
$$   {\mathcal V}^i_t =  \langle M^{s,x}[u],M^{s,x}[\psi_i]\rangle_t +\int_s^{t}U^{s,x}_ra(\psi_i)(r,X_r)dV_r
	-\int_s^{t}\psi_if(\cdot,\cdot,u,v)(r,X_r)dV_r,$$
is a predictable process with bounded variation vanishing at zero. Now \eqref{EMV} and \eqref{EV1} are two c\`adl\`ag versions
of $u\psi_i(\cdot,X)$ on $[s,T]$. 

        By the uniqueness of the decomposition of a special semimartingale
and using Lemma \ref{ModifImpliesdV}
we get 
\begin{eqnarray*}
\int_s^{\cdot}  (v_i &+& ua(\psi_i)-\psi_if (\cdot,\cdot,u,v))(r,X_r)dV_r \\
&=&	
	\langle M^{s,x}[u],M^{s,x}[\psi_i]\rangle +\int_s^{\cdot}ua(\psi_i)(r,X_r)dV_r
	-\int_s^{\cdot}\psi_if(\cdot,\cdot,u,v)(r,X_r)dV_r.
\end{eqnarray*}
 This yields $\langle M^{s,x}[u],M^{s,x}[\psi_i]\rangle=\int_s^{\cdot\vee s}v_i(r,X_r)dV_r$, which implies \eqref{P3210}.
	
\end{proof}

\begin{proposition}\label{CoroClassic}
	Let $(f,g)$ satisfy $H^{growth}(\zeta,\eta)$.
	A classical solution of $Pseudo-PDE(f,g)$   is a decoupled mild solution.

	Conversely, a decoupled mild solution of $Pseudo-PDE(f,g)$  
	belonging to $\mathcal{D}(\Gamma^{\psi})$ is a classical solution of $Pseudo-PDE(f,g)$ up to a zero-potential set, meaning that it satisfies the first equality of \eqref{PDE} up to a set of zero potential. 
\end{proposition}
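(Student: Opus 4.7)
The plan is to deduce both implications from the already established equivalence between martingale and decoupled mild solutions (Propositions \ref{MartingaleImpliesMild} and \ref{MildImpliesMartingale}), combined with the two compatibility statements Remark \ref{Rextendeda} ($\mathfrak{a}$ extends $a$) and Corollary \ref{RExtendedClassical} ($\mathfrak{G}^{\psi}$ extends $\Gamma^{\psi}$). In this way the proposition becomes a bookkeeping exercise about the sense in which the various equalities hold.

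For the direct implication, I start from a classical solution $u$. By definition $u, u\psi_i \in \mathcal{D}(a)$, hence $u \in \mathcal{D}(\mathfrak{a})$ by Remark \ref{Rextendeda}, and $a(u)$ is a representative of the class $\mathfrak{a}(u) \in L^0_X$. Since $u \in \mathcal{D}(\Gamma^{\psi})$ as well, Corollary \ref{RExtendedClassical} gives $\Gamma^{\psi}(u) = \mathfrak{G}^{\psi}(u)$ up to zero-potential sets. The pointwise identity $a(u) + f(\cdot,\cdot,u,\Gamma^{\psi}(u)) = 0$ therefore lifts to the $L^0_X$-equality $\mathfrak{a}(u) = -f(\cdot,\cdot,u,\mathfrak{G}^{\psi}(u))$, and together with the pointwise terminal condition $u(T,\cdot)=g$ this exhibits $u$ as a martingale solution in the sense of Definition \ref{D417}. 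Proposition \ref{MartingaleImpliesMild} then yields the decoupled mild property.

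For the converse, let $u$ be a decoupled mild solution with $u \in \mathcal{D}(\Gamma^{\psi})$. Proposition \ref{MildImpliesMartingale} first promotes $u$ to a martingale solution, so $u \in \mathcal{D}(\mathfrak{a})$, $u(T,\cdot)=g$ pointwise, and $\mathfrak{a}(u) = -f(\cdot,\cdot,u,\mathfrak{G}^{\psi}(u))$ as elements of $L^0_X$. Since $u \in \mathcal{D}(\Gamma^{\psi}) \subset \mathcal{D}(a)$, Proposition \ref{uniquenesspsi} forces $a(u)$ to coincide with any representative of $\mathfrak{a}(u)$ outside a zero-potential set, and Corollary \ref{RExtendedClassical} allows me to substitute $\Gamma^{\psi}(u)$ for $\mathfrak{G}^{\psi}(u)$ inside the driver at the cost of another zero-potential modification (using Lipschitz continuity of $f$ in the last variable from the $H^{lip}$ part to propagate the identification, or directly the pointwise composition in the $H^{growth}$ setting). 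Combining these identifications yields $a(u) + f(\cdot,\cdot,u,\Gamma^{\psi}(u)) = 0$ off a set of zero potential, which is exactly the conclusion of the statement.

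No step is expected to be genuinely hard: the only delicate point is keeping track of the sense of each equality, since the classical PDE is a pointwise identity on all of $[0,T]\times E$ whereas $\mathfrak{a}$ and $\mathfrak{G}^{\psi}$ are intrinsically defined modulo the equivalence relation defining $L^0_X$. This discrepancy is precisely what the qualifier \emph{up to a zero-potential set} in the statement is meant to absorb, and it prevents one from hoping for a stronger pointwise conclusion in the converse direction without additional regularity.
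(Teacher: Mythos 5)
Your proposal is correct and follows essentially the same route as the paper: both directions are reduced to the martingale-solution notion via Remark \ref{Rextendeda}, Corollary \ref{RExtendedClassical} and Propositions \ref{MartingaleImpliesMild} and \ref{MildImpliesMartingale}, with the chain of zero-potential identifications $\mathfrak{a}(u)=a(u)$ and $\Gamma^{\psi}(u)=\mathfrak{G}^{\psi}(u)$ doing the bookkeeping exactly as in the paper's own (more terse) argument.
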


\begin{proof} 
	Let $u$ be a classical solution of $Pseudo-PDE(f,g)$. Definition \ref{MarkovPDE}   and Corollary \ref{RExtendedClassical} imply 
	that $u(T,\cdot)=g$, and the equalities up to zero potential sets
	\begin{equation}
	\mathfrak{a}(u) = a(u)
	=-f(\cdot,\cdot,u,\Gamma^{\psi}(u))
	=-f(\cdot,\cdot,u,\mathfrak{G}^{\psi}(u)),
	\end{equation}
	which shows that $u$ is a martingale solution and by Proposition \ref{MartingaleImpliesMild} it is also a decoupled mild solution.
	
	Similarly, the second statement follows by Proposition \ref{MildImpliesMartingale}, Definition \ref{D417}, and again Corollary \ref{RExtendedClassical}.
\end{proof}

\subsection{Existence and uniqueness of a decoupled mild solution}\label{mart-mild}
In this subsection, the positive functions $\zeta,\eta$ and
the functions $(f,g)$ appearing in $Pseudo-PDE(f,g)$ are  fixed. 
We still  assume that the canonical Markov class satisfies $H^{mom}(\zeta,\eta)$. 

Theorem \ref{Defuv} below can be proved
using arguments which are very close to those developed 
in the proof of Theorem 5.15 in \cite{paper1preprint}.
For the convenience of the reader, we postpone the adapted proof to 
Appendix \ref{B}.

Let $(Y^{s,x},M^{s,x})$ be for any $(s,x)\in[0,T]\times E$   the unique
solution of \eqref{BSDE}, see Notation \ref{N55bis}.

\begin{theorem}\label{Defuv}
Let $(f,g)$ satisfy $H^{lip}(\zeta,\eta)$.
There exists $u\in\mathcal{D}(\mathfrak{a})$
such that for any $(s,x)\in[0,T]\times E$
\begin{equation*}
\left\{\begin{array}{rcl}
\forall t\in [s,T]:  Y^{s,x}_t &=& u(t,X_t)\quad \mathbbm{P}^{s,x}\text{a.s.}  \\
 M^{s,x}&=&M[u]^{s,x},
\end{array}\right.
\end{equation*}
and in particular $\frac{d\langle  M^{s,x},M[\psi]^{s,x}\rangle}{dV}=\mathfrak{G}^{\psi}(u)(\cdot,X_{\cdot})$  $dV\otimes d\mathbbm{P}^{s,x}$  a.e. on $[s,T]$. Moreover, for every $(s,x)$, $Y^{s,x}_s$ is $\mathbbm{P}^{s,x}$ a.s. equal to a constant (which we shall still denote $Y^{s,x}_s$) 
and $u(s,x) =  Y^{s,x}_s$ for every $(s,x) \in [0,T] \times E$.
\end{theorem}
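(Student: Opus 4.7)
The plan is to first observe that $u(s,x) := Y^{s,x}_s$ is a well-defined deterministic quantity, then to construct $u$ as a Borel function via Picard iteration, and finally to identify the remaining claims using the flow property of the canonical Markov class together with the uniqueness part of Theorem \ref{uniquenessBSDE}. By Definition \ref{MartingaleProblem}(a), $\mathbbm{P}^{s,x}(X_t = x,\,\forall t\in[0,s]) = 1$, so $\mathcal{F}_s$ is $\mathbbm{P}^{s,x}$-trivial, and hence so is its augmentation $\mathcal{F}^{s,x}_s$. Since $Y^{s,x}$ is $(\mathcal{F}^{s,x}_t)$-adapted, $Y^{s,x}_s$ is $\mathbbm{P}^{s,x}$-a.s. equal to a constant, which I define as $u(s,x)$.

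To obtain Borel measurability of $(s,x) \mapsto u(s,x)$, I would retrace the Picard iteration underlying Theorem \ref{uniquenessBSDE}. Starting from $(Y^{0,s,x},Z^{0,s,x}) \equiv (0,0)$, define $(Y^{n+1,s,x}, M^{n+1,s,x})$ as the solution of the BSDE obtained by plugging $(Y^{n,s,x}, Z^{n,s,x})$ into the driver, with $Z^{n,s,x} := \frac{d\langle M^{n,s,x}, M[\psi]^{s,x}\rangle}{dV}$. Each iterate reduces to computing a conditional expectation that, by the Markov property combined with the kernel measurability of Remark \ref{R63}, admits a Borel representation in $(s,x)$. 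An induction then produces Borel functions $u^n, v^n$ with $Y^{n,s,x}_t = u^n(t, X_t)$ and $Z^{n,s,x}_t = v^n(t, X_t)$ $\mathbbm{P}^{s,x}$-a.s. on $[s,T]$. The contraction argument yields convergence in $\mathcal{L}^2(dV\otimes d\mathbbm{P}^{s,x}) \times \mathcal{H}^2_0$, and one recovers Borel limits $u, v$ (possibly after extracting subsequences and zeroing out the negligible exceptional set). As a consistency check, applying the Markov flow together with the uniqueness of Theorem \ref{uniquenessBSDE} on $[t,T]$ identifies $Y^{s,x}_t$ with $Y^{t,X_t}_t = u(t,X_t)$ under $\mathbbm{P}^{s,x}$.

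With $u,v$ in hand, the BSDE \eqref{BSDE} rewrites, $\mathbbm{P}^{s,x}$-a.s. on $[s,T]$, as
\begin{equation*}
u(\cdot, X_\cdot) = u(s,x) - \int_s^\cdot f(r, X_r, u(r, X_r), v(r, X_r))\, dV_r + M^{s,x},
\end{equation*}
with $M^{s,x} \in \mathcal{H}^2_0$. The growth condition in $H^{lip}(\zeta,\eta)$, together with $H^{mom}(\zeta,\eta)$ and $u,v \in \mathcal{L}^2_X$, forces $\chi := -f(\cdot,\cdot,u,v) \in \mathcal{L}^0_X$; Definition \ref{domainextended} then places $u$ in $\mathcal{D}(\mathfrak{a})$ with $\mathfrak{a}(u)$ represented by $\chi$, and Definition \ref{extended} combined with Proposition \ref{uniquenesspsi} forces $M[u]^{s,x} = M^{s,x}$ up to $\mathbbm{P}^{s,x}$-indistinguishability for every $(s,x)$. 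Finally, Proposition \ref{P321} identifies $\frac{d\langle M^{s,x}, M[\psi]^{s,x}\rangle}{dV} = \mathfrak{G}^\psi(u)(\cdot, X_\cdot)$ $dV\otimes d\mathbbm{P}^{s,x}$-a.e., which by Lemma \ref{ModifImpliesdV} identifies $v$ with $\mathfrak{G}^\psi(u)$ up to a zero potential set.

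The main obstacle is the Borel measurability step: controlling how $\mathbbm{E}^{s,x}[\cdots\,|\,\mathcal{F}_t]$ depends jointly on $(s,x,t)$ through the non-homogeneous kernel $P_{s,t}$ and propagating this through the martingale representation defining $M^{n+1,s,x}$. This technical bookkeeping closely mirrors the argument of \cite{paper1preprint}, which is why the detailed proof is deferred to Appendix \ref{B}.
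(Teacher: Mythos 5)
Your proposal is correct and follows essentially the same route as the paper's Appendix~\ref{B}: Borel representations of the Picard iterates via the Markov property and the measurable kernel (the paper's Lemma~\ref{L41} and Proposition~\ref{P511}), a.e.\ convergence from the contraction (Proposition~\ref{cvdt}), and a final pass through the extended-domain machinery to get $u\in\mathcal{D}(\mathfrak{a})$, $M[u]^{s,x}=M^{s,x}$ and $v=\mathfrak{G}^{\psi}(u)$. The only place where the paper is slightly more careful is the limit step: rather than ``extracting subsequences'' (whose choice could a priori depend on $(s,x)$ and spoil joint Borel measurability), it uses the summability coming from the contraction to get $dV\otimes d\mathbbm{P}^{s,x}$-a.e.\ convergence of the \emph{full} sequence and defines the limit as $\limsup_k u_k$, which is automatically Borel.
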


\begin{corollary}\label{uvBSDE}
Let $(f,g)$ satisfy $H^{lip}(\zeta,\eta)$.
 For any $(s,x)\in[0,T]\times E$, 
 the  functions $u$ obtained in Theorem \ref{Defuv} satisfies $\mathbbm{P}^{s,x}$ a.s.  on $[s,T]$
\begin{equation*}
u(t,X_t) = g(X_T) + \int_t^T f\left(r,X_r,u(r,X_r),\mathfrak{G}^{\psi}(u)(r,X_r)\right)dV_r  -(M[u]^{s,x}_T - M[u]^{s,x}_t),
\end{equation*}
and in particular, $\mathfrak{a}(u)=-f(\cdot,\cdot,u,\mathfrak{G}^{\psi}(u))$.
\end{corollary}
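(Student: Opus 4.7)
The plan is to substitute the identifications supplied by Theorem \ref{Defuv} directly into $BSDE^{s,x}(f,g)$ and then to read off both the first claimed identity and the identification of $\mathfrak{a}(u)$ by comparison with the semimartingale decomposition coming from Definition \ref{extended}.

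Concretely, I would fix $(s,x)\in[0,T]\times E$ and start from the defining relation \eqref{BSDE} of $(Y^{s,x},M^{s,x})$ on $[s,T]$. Theorem \ref{Defuv} supplies three pieces of information: $Y^{s,x}_t=u(t,X_t)$ $\mathbbm{P}^{s,x}$-a.s. for every $t\in[s,T]$ (hence, both processes being cadlag, up to indistinguishability); $M^{s,x}=M[u]^{s,x}$; and $\tfrac{d\langle M^{s,x},M[\psi]^{s,x}\rangle}{dV}=\mathfrak{G}^{\psi}(u)(\cdot,X_{\cdot})$ $dV\otimes d\mathbbm{P}^{s,x}$ a.e. These can be plugged termwise into \eqref{BSDE}. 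The only non-cosmetic point is that the substitution inside the driver $f$ requires $dV\otimes d\mathbbm{P}^{s,x}$-a.e. equality of the $Y$-argument with $u(\cdot,X_{\cdot})$; this is exactly what Lemma \ref{ModifImpliesdV} provides, once we know the two processes are $\mathbbm{P}^{s,x}$-modifications. The integrands thus coincide $dV\otimes d\mathbbm{P}^{s,x}$ a.e., so the $dV$-integrals agree $\mathbbm{P}^{s,x}$-a.s., and the first assertion follows.

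For the identification of $\mathfrak{a}(u)$, I would compare two semimartingale decompositions of $u(\cdot,X_{\cdot})$ on $[s,T]$. Evaluating the identity just proved at $t=s$ (where $M[u]^{s,x}_s=0$) and subtracting from the general-$t$ expression gives $u(t,X_t)-u(s,x)=-\int_s^t f(\cdot,\cdot,u,\mathfrak{G}^{\psi}(u))(r,X_r)dV_r+M[u]^{s,x}_t$ $\mathbbm{P}^{s,x}$-a.s. On the other hand, by Definition \ref{extended} we have, up to indistinguishability on $[s,T]$, $u(t,X_t)-u(s,x)=\int_s^t \mathfrak{a}(u)(r,X_r)dV_r+M[u]^{s,x}_t$. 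Subtracting, the martingale parts cancel and we obtain $\int_s^t\bigl(\mathfrak{a}(u)+f(\cdot,\cdot,u,\mathfrak{G}^{\psi}(u))\bigr)(r,X_r)dV_r=0$ $\mathbbm{P}^{s,x}$-a.s. for every $t\in[s,T]$ and every $(s,x)$. Proposition \ref{uniquenessupto} then translates this into the equality $\mathfrak{a}(u)=-f(\cdot,\cdot,u,\mathfrak{G}^{\psi}(u))$ in $L^0_X$.

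I expect no genuine obstacle; the argument is bookkeeping on the several flavours of ``a.s.'' equality. The one point to be careful about is upgrading the pointwise-in-$t$ $\mathbbm{P}^{s,x}$-modification statement of Theorem \ref{Defuv} to a $dV\otimes d\mathbbm{P}^{s,x}$-a.e. statement before inserting $u(r,X_r)$ into the (Lipschitz) driver $f$, which Lemma \ref{ModifImpliesdV} handles cleanly; from there, the uniqueness of the predictable bounded-variation part of a special semimartingale together with Proposition \ref{uniquenessupto} closes the argument.
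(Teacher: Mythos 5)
Your argument is correct and is exactly the one the paper intends: its proof of Corollary \ref{uvBSDE} simply cites Theorem \ref{Defuv} and Lemma \ref{ModifImpliesdV}, and your write-up supplies the bookkeeping (substitution into the driver via the $dV\otimes d\mathbbm{P}^{s,x}$-a.e.\ identification, then comparison of the two decompositions of $u(\cdot,X_\cdot)$ and Proposition \ref{uniquenessupto}) that this citation leaves implicit. No gaps.
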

\begin{proof}
The corollary follows from Theorem \ref{Defuv} and Lemma \ref{ModifImpliesdV}. 
\end{proof}

\begin{theorem}\label{MainTheorem}
Let $(\mathbbm{P}^{s,x})_{(s,x)\in[0,T]\times E}$ be a canonical Markov class associated to a transition kernel measurable 
in time (see Definitions 3.4, 3.5 and 3.7 in \cite{paperAF}) which
solves a martingale problem associated with
the triplet $(\mathcal{D}(a),a,V)$.
Moreover we suppose  Hypothesis $H^{mom}(\zeta,\eta)$ for some positive
 $\zeta,\eta$.  Let $(f,g)$ be a couple satisfying $H^{lip}(\zeta,\eta)$.
\\
\\
Then $Pseudo-PDE(f,g)$ has a unique decoupled mild solution given by 
\begin{equation} \label{E525}
u:
\begin{array}{ccl}
[0,T]\times E&\longrightarrow& \mathbbm{R}\\
(s,x)&\longmapsto& Y^{s,x}_s,
\end{array}
\end{equation}
where $(Y^{s,x}, M^{s,x})$ denotes the (unique) solution of $BSDE^{s,x}(f,g)$ for fixed $(s,x)$.
\end{theorem}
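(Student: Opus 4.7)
The plan is to split the argument into existence and uniqueness, both of which reduce to the equivalence between decoupled mild solutions and martingale solutions established in Propositions \ref{MartingaleImpliesMild} and \ref{MildImpliesMartingale}, combined with the BSDE uniqueness from Theorem \ref{uniquenessBSDE}.

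For existence, I would start from Theorem \ref{Defuv}, which already provides a function $u\in\mathcal{D}(\mathfrak{a})$ satisfying $Y^{s,x}_t = u(t,X_t)$ $\mathbbm{P}^{s,x}$-a.s. on $[s,T]$ and $M^{s,x}=M[u]^{s,x}$, with $u(s,x)=Y^{s,x}_s$ for every $(s,x)$. By Corollary \ref{uvBSDE}, this $u$ satisfies $\mathfrak{a}(u)=-f(\cdot,\cdot,u,\mathfrak{G}^{\psi}(u))$. To verify the terminal condition $u(T,\cdot)=g$, I would evaluate the identity $Y^{T,x}_T=u(T,X_T)$ $\mathbbm{P}^{T,x}$-a.s.\ together with $Y^{T,x}_T=g(X_T)$ and $X_T=x$ under $\mathbbm{P}^{T,x}$ (from the martingale problem, part (a) of Definition \ref{MartingaleProblem}). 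Thus $u$ is a martingale solution of $Pseudo-PDE(f,g)$ in the sense of Definition \ref{D417}, and Proposition \ref{MartingaleImpliesMild} immediately yields that $u$ is a decoupled mild solution.

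For uniqueness, let $\tilde u$ be any decoupled mild solution. By Proposition \ref{MildImpliesMartingale} it is a martingale solution; hence by Proposition \ref{MartImpliesBSDE}, for every $(s,x)\in[0,T]\times E$ the pair
\begin{equation*}
\left(\tilde u(t,X_t),\ \tilde u(t,X_t)-\tilde u(s,x)+\int_s^t f(\cdot,\cdot,\tilde u,\mathfrak{G}^{\psi}(\tilde u))(r,X_r)dV_r\right)_{t\in[s,T]}
\end{equation*}
admits a $\mathbbm{P}^{s,x}$-version solving $BSDE^{s,x}(f,g)$ on $[s,T]$. By the uniqueness statement in Theorem \ref{uniquenessBSDE} (applied on $[s,T]$ via Remark \ref{BSDESmallInt}), this version is indistinguishable from $(Y^{s,x},M^{s,x})$. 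Evaluating at $t=s$ gives $\tilde u(s,x)=Y^{s,x}_s$ $\mathbbm{P}^{s,x}$-a.s., and since both sides are deterministic (the right-hand side by Theorem \ref{Defuv}), we conclude $\tilde u(s,x)=Y^{s,x}_s = u(s,x)$ for every $(s,x)$.

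I expect no serious obstacle, as most of the work has already been packaged in the earlier propositions. The only point requiring a touch of care is justifying that the terminal condition $u(T,\cdot)=g$ holds pointwise for the candidate produced by Theorem \ref{Defuv}; this is handled by exploiting the degenerate law $\mathbbm{P}^{T,x}(X_T=x)=1$. Everything else is a direct chaining of the equivalence results and the BSDE well-posedness.
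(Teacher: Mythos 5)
Your proposal is correct and follows essentially the same route as the paper: existence via Theorem \ref{Defuv}, Corollary \ref{uvBSDE}, the degenerate law of $X_T$ under $\mathbbm{P}^{T,x}$ for the terminal condition, and Proposition \ref{MartingaleImpliesMild}; uniqueness via Proposition \ref{MildImpliesMartingale}, Proposition \ref{MartImpliesBSDE}, and the BSDE well-posedness of Theorem \ref{uniquenessBSDE} with Remark \ref{BSDESmallInt}. The only cosmetic difference is that the paper compares two arbitrary martingale solutions to each other, whereas you compare an arbitrary decoupled mild solution directly to the candidate $u$; both reduce to the same BSDE uniqueness argument.
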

\begin{proof}
Let $u$ be the function exhibited in Theorem \ref{Defuv}.
In order to show that $u$ is a decoupled mild solution of $Pseudo-PDE(f,g)$, it is enough by Proposition \ref{MartingaleImpliesMild} to show that it is a martingale solution.

In Corollary \ref{uvBSDE}, we have already seen that $\mathfrak{a}(u)= -f(\cdot,\cdot,u,\mathfrak{G}^{\psi}(u))$.
Concerning the second line of \eqref{PDEextended},  for any $x\in E$, 
we have
\\
$u(T,x)=u(T,X_T)=g(X_T)=g(x)$ $\mathbbm{P}^{T,x}$ a.s., so $u(T,\cdot)=g$, in the deterministic pointwise sense. 

We now show uniqueness. By Proposition \ref{MildImpliesMartingale}, it is enough to show that $Pseudo-PDE(f,g)$ admits at most one martingale solution.
Let $u,u'$ be two martingale solutions of $Pseudo-PDE(f,g)$. We fix $(s,x)\in[0,T]\times E$. By Proposition \ref{MartImpliesBSDE}, both couples, indexed by $[s,T]$, 
\\
$\left(u(\cdot,X),\quad u(\cdot,X)-u(s,x)+\int_s^{\cdot}f(\cdot,\cdot,u,\mathfrak{G}^{\psi}(u))(r,X_r)dV_r\right)$ and  
\\
$\left(u'(\cdot,X),\quad u'(\cdot,X)-u'(s,x)+\int_s^{\cdot}f(\cdot,\cdot,u',\mathfrak{G}^{\psi}(u))(r,X_r)dV_r\right)$ 
admit 
a $\mathbbm{P}^{s,x}$-version  which solves 
$BSDE^{s,x}(f,g)$ on $[s,T]$. By  Theorem \ref{uniquenessBSDE} and Remark \ref{BSDESmallInt},  $BSDE^{s,x}(f,g)$ admits a unique solution, so  $u(\cdot,X_{\cdot})$ and $u'(\cdot,X_{\cdot})$ are 
$\mathbbm{P}^{s,x}$-modifications one of the other on $[s,T]$. In particular, considering their values at time $s$, we have $u(s,x)=u'(s,x)$. We therefore have $u'=u$.
\end{proof}

\begin{corollary}\label{ClassicUnique} 
	Let $(f,g)$ satisfy $H^{lip}(\zeta,\eta)$.
	There is at most one classical solution of $Pseudo-PDE(f,g)$ and this only possible classical solution is the unique decoupled mild solution $(s,x)\longmapsto Y^{s,x}_s$, where $(Y^{s,x}, M^{s,x})$ denotes the (unique) solution of $BSDE^{s,x}(f,g)$ for fixed $(s,x)$.
\end{corollary}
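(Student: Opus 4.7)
The plan is to derive the corollary as an almost immediate consequence of the two main structural results already established, namely Proposition \ref{CoroClassic} and Theorem \ref{MainTheorem}. No new stochastic analysis is needed; the entire argument is a chain of implications threading through the equivalences between the three notions of solution (classical, martingale, decoupled mild).

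First, I would let $u$ be any classical solution of $Pseudo-PDE(f,g)$. By the first assertion of Proposition \ref{CoroClassic}, $u$ is then automatically a decoupled mild solution of $Pseudo-PDE(f,g)$. At this point Theorem \ref{MainTheorem} applies: since $(f,g)$ verifies $H^{lip}(\zeta,\eta)$ and the Markov class satisfies $H^{mom}(\zeta,\eta)$, the $Pseudo-PDE(f,g)$ admits a unique decoupled mild solution, which is explicitly given by the map $(s,x)\longmapsto Y^{s,x}_s$, with $(Y^{s,x},M^{s,x})$ the solution of $BSDE^{s,x}(f,g)$ furnished by Theorem \ref{uniquenessBSDE}. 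Identifying $u$ with this unique decoupled mild solution yields $u(s,x)=Y^{s,x}_s$ for every $(s,x)\in[0,T]\times E$.

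Conversely, if $u_1$ and $u_2$ are two classical solutions, each is by the preceding step equal pointwise to the same function $(s,x)\longmapsto Y^{s,x}_s$, hence $u_1=u_2$ as Borel functions. This settles both the ``at most one classical solution'' statement and the explicit identification asserted in the corollary.

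I do not expect any real obstacle: the only small point worth being careful about is that Proposition \ref{CoroClassic} yields a genuine decoupled mild solution (in the sense of Definition \ref{mildsoluv}) from a classical solution, and not merely a relation up to a zero-potential set as in its converse direction. This is exactly what is stated in the first sentence of Proposition \ref{CoroClassic}, so one simply invokes it as is and then concludes via Theorem \ref{MainTheorem}. The corollary therefore follows in a couple of lines.
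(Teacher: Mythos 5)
Your proposal is correct and follows exactly the same route as the paper: the paper's proof of Corollary \ref{ClassicUnique} is a one-line citation of Proposition \ref{CoroClassic} (a classical solution is a decoupled mild solution) combined with Theorem \ref{MainTheorem} (uniqueness of the decoupled mild solution and its identification with $(s,x)\longmapsto Y^{s,x}_s$). Your added remark about using only the first, exact direction of Proposition \ref{CoroClassic} rather than its converse-up-to-zero-potential-sets direction is a correct and sensible precaution.
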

\begin{proof}
	The proof follows from Proposition \ref{CoroClassic} and Theorem \ref{MainTheorem}.
\end{proof}

\begin{remark} \label{R524}
Let $(u,v)$ be the unique solution of the identification problem $IP(f,g)$, then $v$ also admits
a stochastic representation. Indeed, for every $(s,x)\in[0,T]\times E$, on  $[s,T]$,
\\
$\frac{d\langle M^{s,x},M^{s,x}[\psi]\rangle}{dV}=v(\cdot,X_{\cdot})$ $dV\otimes d\mathbbm{P}^{s,x}$ a.e. where $M^{s,x}$ is the second item of the solution of $BSDE^{s,x}(f,g)$.
\end{remark}

The existence of a decoupled mild solution of $Pseudo-PDE(f,g)$ provides in fact
an existence theorem for $BSDE^{s,x}(f,g)$ for any $(s,x)$.
The following constitutes  the converse of Theorem \ref{MainTheorem}.

\begin{proposition}\label{MildImpliesBSDE}
Assume $(f,g)$  satisfies $H^{mom}(\zeta,\eta)$.
Let $(u,v)$ be a solution of $IP(f,g)$, let $(s,x)\in[0,T]\times E$ and the associated probability $\mathbbm{P}^{s,x}$ be fixed. 
The couple
\begin{equation}
\left(u(t,X_t),\quad u(t,X_t)-u(s,x)+\int_s^{t}f(\cdot,\cdot,u,v)(r,X_r)dV_r\right)_{t\in[s,T]}
\end{equation}
has a $\mathbbm{P}^{s,x}$-version which solves $BSDE^{s,x}(f,g)$ on $[s,T]$.
\\
\\
In particular if $(f,g)$ satisfies the stronger hypothesis $H^{lip}(\zeta,\eta)$ and $(u,v)$ is the unique solution of $IP(f,g)$, then for any $(s,x)\in[0,T]\times E$,
\\
 $\left(u(t,X_t),\quad u(t,X_t)-u(s,x)+\int_s^{t}f(\cdot,\cdot,u,v)(r,X_r)dV_r\right)_{t\in[s,T]}$ is  a $\mathbbm{P}^{s,x}$-modification of the unique solution of $BSDE^{s,x}(f,g)$ on $[s,T]$.
\end{proposition}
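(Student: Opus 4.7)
The plan is to reduce the statement directly to the two deterministic/probabilistic bridges already established in the paper: the equivalence between decoupled mild solutions and martingale solutions (Proposition \ref{MildImpliesMartingale}), and the passage from martingale solutions to BSDE solutions (Proposition \ref{MartImpliesBSDE}). The key remark is that once $(u,v)$ solves $IP(f,g)$, then $u$ already carries enough information to drive a $BSDE^{s,x}(f,g)$, and the component $v$ only matters up to a zero-potential set.

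First, I would invoke Proposition \ref{MildImpliesMartingale} applied to $(u,v)$, which gives two facts: $u$ is a martingale solution of $Pseudo-PDE(f,g)$, and $v = \mathfrak{G}^{\psi}(u)$ up to zero-potential sets. Next, I would apply Proposition \ref{MartImpliesBSDE} to this martingale solution $u$: it produces, for the fixed $(s,x)$, a $\mathbbm{P}^{s,x}$-version of
\[
\left(u(t,X_t),\; u(t,X_t)-u(s,x)+\int_s^{t}f(\cdot,\cdot,u,\mathfrak{G}^{\psi}(u))(r,X_r)dV_r\right)_{t\in[s,T]}
\]
that solves $BSDE^{s,x}(f,g)$ on $[s,T]$, in the sense of Remark \ref{BSDESmallInt}.

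The remaining step is to replace $\mathfrak{G}^{\psi}(u)$ with $v$ inside the Lebesgue--Stieltjes integral without altering the process up to indistinguishability. Since $v$ and $\mathfrak{G}^{\psi}(u)$ coincide outside a zero-potential set, the Borel functions $f(\cdot,\cdot,u,v)$ and $f(\cdot,\cdot,u,\mathfrak{G}^{\psi}(u))$ also coincide outside that same zero-potential set; Proposition \ref{uniquenessupto} then guarantees that the two drift integrals $\int_s^{\cdot}f(\cdot,\cdot,u,v)(r,X_r)dV_r$ and $\int_s^{\cdot}f(\cdot,\cdot,u,\mathfrak{G}^{\psi}(u))(r,X_r)dV_r$ are $\mathbbm{P}^{s,x}$-indistinguishable on $[s,T]$. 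Substituting yields the claimed $\mathbbm{P}^{s,x}$-version solving $BSDE^{s,x}(f,g)$.

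For the final assertion under $H^{lip}(\zeta,\eta)$, I would simply invoke Theorem \ref{uniquenessBSDE} (together with Remark \ref{BSDESmallInt}) which provides the uniqueness of the solution of $BSDE^{s,x}(f,g)$ on $[s,T]$, so the process displayed in the statement must be a $\mathbbm{P}^{s,x}$-modification of this unique solution. I do not anticipate a genuine obstacle here: the whole argument is essentially an assembling exercise. The only mildly delicate point is ensuring that the swap $\mathfrak{G}^{\psi}(u)\leftrightarrow v$ inside the driver is harmless, and this is exactly what the zero-potential identification from Proposition \ref{MildImpliesMartingale} combined with Proposition \ref{uniquenessupto} is designed to handle.
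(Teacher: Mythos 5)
Your proposal is correct and follows exactly the paper's own route: the paper's proof is literally the one-line statement that the result follows from Propositions \ref{MildImpliesMartingale} and \ref{MartImpliesBSDE}, and you have simply filled in the details of that assembly, including the (correct) justification via Proposition \ref{uniquenessupto} that replacing $\mathfrak{G}^{\psi}(u)$ by $v$ in the driver does not change the drift integral up to indistinguishability.
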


\begin{proof}
It follows from Propositions  \ref{MildImpliesMartingale}, and \ref{MartImpliesBSDE}.

\end{proof}

\section{Examples of applications}\label{S5}

We now develop some examples.
In all the items  below there will be a canonical Markov class with 
 transition kernel being measurable in time which is solution of a  martingale Problem associated to some triplet
$(\mathcal{D}(a),a,V)$ as introduced in Definition \ref{MartingaleProblem}. 
Therefore all the results of
this paper will apply to all the examples below. In particular, Propositions  \ref{MildImpliesMartingale}, \ref{CoroClassic}, Theorem \ref{MainTheorem},  Corollary \ref{ClassicUnique} and Proposition \ref{MildImpliesBSDE} will apply but we will mainly emphasize Theorem \ref{MainTheorem} and Corollary \ref{ClassicUnique}.
In all the examples   $T > 0$ will be fixed.

\subsection{A new approach to Brownian BSDEs and associate semilinear PDEs}\label{S5a}

In this first application, the state space will be $E:=\mathbbm{R}^d$ for some $d\in\mathbbm{N}^*$.
\begin{notation}
A function $\phi\in\mathcal{B}([0,T]\times\mathbbm{R}^d,\mathbbm{R})$ will be said to have \textbf{polynomial growth} if there exists $p\in\mathbbm{N}$ and $C>0$ such that for every $(t,x)\in[0,T]\times\mathbbm{R}^d$,
 $|\phi(t,x)|\leq C( 1+\|x\|^p)$.
For any $k,p\in\mathbbm{N}$, $\mathcal{C}^{k,p}([0,T]\times\mathbbm{R}^d)$ (resp. $\mathcal{C}_b^{k,p}([0,T]\times\mathbbm{R}^d)$, resp. $\mathcal{C}_{pol}^{k,p}([0,T]\times\mathbbm{R}^d)$) will denote the sublinear algebra of $\mathcal{C}([0,T]\times\mathbbm{R}^d,\mathbbm{R})$ of functions admitting continuous (resp. bounded continuous, resp. continuous with polynomial growth) derivatives up to order $k$ in the first variable and order $p$ in the second.
\end{notation}

We consider bounded Borel functions $\mu\in\mathcal{B}_b([0,T]\times \mathbbm{R}^d, \mathbbm{R}^d)$ and $\alpha\in\mathcal{B}_b([0,T]\times \mathbbm{R}^d, S^+_d(\mathbbm{R}))$ where $ S^+_d(\mathbbm{R})$ is the space of symmetric non-negative  $d \times d$ real matrices.
We define for  $\phi \in \mathcal{C}^{1,2}([0,T]\times\mathbbm{R}^d)$ the operator $a$
 by
\begin{equation} \label{PIDE}
a(\phi)=\partial_t\phi + \frac{1}{2}\underset{i,j\leq d}{\sum} \alpha_{i,j}\partial^2_{x_ix_j}\phi + \underset{i\leq d}{\sum} \mu_i\partial_{x_i}\phi.
\end{equation}
We will assume the following.
\begin{hypothesis}\label{HypDiff}
There exists a canonical Markov class $(\mathbbm{P}^{s,x})_{(s,x)\in[0,T]\times\mathbbm{R}^d}$ which solves the Martingale Problem associated to $(\mathcal{C}_b^{1,2}([0,T]\times\mathbbm{R}^d),a,V_t \equiv t)$ in the sense of Definition \ref{MartingaleProblem}.
\end{hypothesis}

We now recall a non-exhaustive list of sets of conditions on $\mu,\alpha$ under which Hypothesis \ref{HypDiff} is satisfied.
\begin{enumerate}
\item  $\alpha$ is continuous non-degenerate, in the sense that for any $t,x$, $\alpha(t,x)$ is invertible, see Theorem 4.2 in \cite{stroock1975diffusion}; 
\item $\mu$  and $\alpha$ are continuous in the second variable, see Exercise 12.4.1  in \cite{stroock};
\item $d=1$ and $\alpha$ is uniformly positive on compact sets, see Exercise 7.3.3 in \cite{stroock}.
\end{enumerate}

\begin{remark}  \label{R63a}
\begin{itemize}
\item 
When the item 1. or 3. above 
is satisfied, the  mentioned canonical Markov class is unique, but whenever
only 2. holds, uniqueness may fail.
\item We emphasize that given a fixed canonical Markov class, we obtain well-posedness results 
concerning the martingale solution (and so the 
decoupled mild solution) of an associated PDE.
\item Nevertheless, for every  canonical Markov class solving the martingale problem could correspond a different solution.
\end{itemize}
\end{remark}

In this context,  for $\phi,\psi$ in $\mathcal{D}(a)$, the carr\'e du champs operator (see Definition \ref{CarreDuChamp}) is given by $\Gamma(\phi,\psi) = \underset{i,j\leq d}{\sum}\alpha_{i,j}\partial_{x_i}\phi\partial_{x_j}\psi$.

\begin{remark}\label{BigDomainDiff}
By a localization procedure, it is also clear that for every \\ $(s,x)\in[0,T]\times\mathbbm{R}^d$, 
for any $\phi\in \mathcal{C}^{1,2}([0,T]\times\mathbbm{R}^d)$, $\phi(\cdot,X_{\cdot})-\int_s^{\cdot}a(\phi)(r,X_r)dr\in\mathcal{H}^2_{loc}$ with respect to $\mathbbm{P}^{s,x}$.
Consequently Proposition \ref{bracketindomain} 
extends to all $\phi\in \mathcal{C}^{1,2}([0,T]\times\mathbbm{R}^d)$.
\end{remark}
We set now $\mathcal{D}(a)=\mathcal{C}_{pol}^{1,2}([0,T]\times\mathbbm{R}^d)$.

For any $i\in[\![ 1;d]\!]$,  the function $Id_i$ denotes $(t,x)\longmapsto x_i$ which belongs to $\mathcal{D}(a)$ and $Id:=(Id_1,\cdots,Id_d)$.
It is clear that for any $i$, $a(Id_i)=\mu_i$, and for any $i,j$, $Id_iId_j\in\mathcal{D}(a)$ and $\Gamma(Id_i,Id_j)=\alpha_{i,j}$. In particular, by Corollary \ref{H2Vloc}, $(Id_1,\cdots,Id_d)$ satisfy Hypothesis \ref{HypBrackPhi} and,
since $\mu,\alpha$ are bounded, they satisfy Hypothesis \ref{HypPhi}.

For any $i$ we can therefore consider the MAF 
$M[Id_i]:(t,u)\mapsto X^i_u-X^i_t-\int_t^u\mu_i(r,X_r)dr$ whose c\`adl\`ag version under $\mathbbm{P}^{s,x}$ 
for every $(s,x)\in[0,T]\times\mathbbm{R}^d$  is 
 $M[Id_i]^{s,x}=\mathds{1}_{[s,T]}\left(X^i-x_i-\int_s^{\cdot}\mu_i(r,X_r)dr\right)$ and for any
 $i,j$ we have $\langle M[Id_i]^{s,x},M[Id_j]^{s,x}\rangle=\int_s^{\cdot\vee s}\alpha_{i,j}(r,X_r)dr$.

\begin{lemma}\label{momsde}
	Let $(s,x)\in[0,T]\times\mathbbm{R}^d$ and associated probability  $\mathbbm{P}^{s,x}$, $i\in[\![1;d]\!]$ and $p\in[1,+\infty[$ be fixed. Then $\underset{t\in[s,T]}{\text{sup }}|X^i_t|^p\in L^1$.
\end{lemma}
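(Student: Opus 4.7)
The plan is to use the semi-martingale decomposition of $X^i$ under $\mathbbm{P}^{s,x}$ provided by the martingale problem, bound the finite-variation piece deterministically using boundedness of $\mu$, and then control the martingale piece with the Burkholder-Davis-Gundy (BDG) inequality, exploiting the boundedness of $\alpha$.

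First I would invoke that $Id_i \in \mathcal{D}(a)$ with $a(Id_i)=\mu_i$ and Remark \ref{BigDomainDiff}, so that under $\mathbbm{P}^{s,x}$, for $t \in [s,T]$,
\begin{equation*}
X^i_t = x_i + \int_s^t \mu_i(r,X_r)\,dr + M[Id_i]^{s,x}_t.
\end{equation*}
Since $\mu_i$ is bounded by assumption, the triangle inequality and a trivial convexity bound yield, for some constant $C_p>0$ depending only on $p$,
\begin{equation*}
\sup_{t\in[s,T]} |X^i_t|^p \leq C_p\Bigl(|x_i|^p + T^p\|\mu_i\|_\infty^p + \sup_{t\in[s,T]} |M[Id_i]^{s,x}_t|^p\Bigr).
\end{equation*}
Only the last term is random; it suffices to show it has finite expectation.

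Next I would compute the bracket of $M[Id_i]^{s,x}$. From Proposition \ref{bracketindomain} applied to $\phi=\psi_i=Id_i$ (noting $Id_i^2 \in \mathcal{D}(a)$, so $Id_i \in \mathcal{D}(\Gamma^{\psi})$), one obtains $\langle M[Id_i]^{s,x}\rangle = \int_s^{\cdot\vee s}\alpha_{i,i}(r,X_r)\,dr$, which is bounded by $\|\alpha_{i,i}\|_\infty T$ almost surely. Because $a$ is a purely local second-order operator (no jump part) with bounded coefficients, the solution $X$ of the martingale problem is continuous under $\mathbbm{P}^{s,x}$ (a standard consequence of the structure of $a$, cf.\ Stroock-Varadhan); hence $M[Id_i]^{s,x}$ is continuous, and $[M[Id_i]^{s,x}]_T=\langle M[Id_i]^{s,x}\rangle_T$ is almost surely bounded by $\|\alpha_{i,i}\|_\infty T$. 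Applying the BDG inequality,
\begin{equation*}
\mathbbm{E}^{s,x}\!\bigl[\sup_{t\in[s,T]}|M[Id_i]^{s,x}_t|^p\bigr] \leq C_p'\,\mathbbm{E}^{s,x}\!\bigl[[M[Id_i]^{s,x}]_T^{p/2}\bigr] \leq C_p'\,(\|\alpha_{i,i}\|_\infty T)^{p/2} < \infty,
\end{equation*}
which together with the previous display gives the claim.

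The only potentially delicate point is justifying that $X$ is continuous (so that the predictable and the optional bracket of $M[Id_i]^{s,x}$ coincide and both are pathwise bounded); if one wishes to avoid that discussion, an alternative is to localize by the stopping times $\tau_n := \inf\{t \geq s : |M[Id_i]^{s,x}_t| \geq n\}$, apply BDG to the stopped martingale (for which $[M^{\tau_n}]_T \leq \langle M\rangle_T + |\Delta M_{\tau_n}|^2 \leq \|\alpha_{i,i}\|_\infty T + 4n^2$ is a.s.\ bounded), and then pass to the limit using monotone convergence after first establishing $L^2$-boundedness uniformly in $n$ from $\mathbbm{E}^{s,x}[\langle M\rangle_T] \leq \|\alpha_{i,i}\|_\infty T$. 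Either route concludes the proof.
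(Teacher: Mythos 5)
Your proof is correct and follows essentially the same route as the paper's: decompose $X^i$ via the martingale problem, absorb the drift term using the boundedness of $\mu_i$, and apply Burkholder--Davis--Gundy to $M[Id_i]^{s,x}$ using that $\langle M[Id_i]^{s,x}\rangle=\int_s^{\cdot\vee s}\alpha_{i,i}(r,X_r)dr$ is deterministically bounded. The only difference is that you explicitly address the $[M]$ versus $\langle M\rangle$ subtlety in BDG for $p>2$ (via continuity of $X$ or a localization argument), a genuine point that the paper's one-line invocation of BDG leaves implicit.
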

\begin{proof}
	We have $X^i=x_i+\int_s^{\cdot}\mu_i(r,X_r)dr+M[Id_i]^{s,x}$ where $\mu_i$ is bounded so it is enough to show that $\underset{t\in[s,T]}{\text{sup }}|M[Id_i]^{s,x}_t|^p\in L^1$. Since 
$\langle M[Id_i]^{s,x}\rangle = \int_s^{\cdot\vee s}\alpha_{i,i}(r,X_r)dr$,
 which is bounded, the result holds by Burkholder-Davis-Gundy inequality.
\end{proof}

\begin{corollary}
$(\mathbbm{P}^{s,x})_{(s,x)\in[0,T]\times\mathbbm{R}^d}$ solves the Martingale Problem associated to $(\mathcal{C}_{pol}^{1,2}([0,T]\times\mathbbm{R}^d),a,V_t \equiv t)$ in the sense of Definition \ref{MartingaleProblem}.
\end{corollary}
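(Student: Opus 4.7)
The plan is to leverage Remark \ref{BigDomainDiff}, which already tells us that for every $\phi \in \mathcal{C}^{1,2}([0,T]\times\mathbbm{R}^d)$ the process $N^\phi := \mathds{1}_{[s,T]}\left(\phi(\cdot,X_\cdot) - \phi(s,x) - \int_s^\cdot a(\phi)(r,X_r)dr\right)$ is a $\mathbbm{P}^{s,x}$-local martingale in $\mathcal{H}^2_{loc}$. It then remains to upgrade this to a true square integrable martingale when $\phi \in \mathcal{C}_{pol}^{1,2}$, using the polynomial growth and the moment estimate from Lemma \ref{momsde}. Condition (a) of Definition \ref{MartingaleProblem} is inherited directly from Hypothesis \ref{HypDiff} since it only constrains the law of $X$ on $[0,s]$ and does not involve the operator's domain.

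For condition (b), fix $(s,x) \in [0,T]\times\mathbbm{R}^d$ and $\phi \in \mathcal{C}_{pol}^{1,2}([0,T]\times\mathbbm{R}^d)$. By definition of $a$ in \eqref{PIDE} and the boundedness of $\mu$ and $\alpha$, both $\phi$ and $a(\phi)$ have polynomial growth: there exist $p \in \mathbbm{N}$ and $C > 0$ with
\begin{equation*}
|\phi(t,y)| + |a(\phi)(t,y)| \le C(1 + \|y\|^p), \qquad (t,y)\in [0,T]\times\mathbbm{R}^d.
\end{equation*}
Applying Lemma \ref{momsde} to each coordinate with exponent $2p$, we obtain $\mathbbm{E}^{s,x}\!\left[\sup_{t\in[s,T]}\|X_t\|^{2p}\right] < \infty$. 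Hence
\begin{equation*}
\mathbbm{E}^{s,x}\!\left[\sup_{t\in[s,T]}|\phi(t,X_t)|^2\right] + \mathbbm{E}^{s,x}\!\left[\left(\int_s^T |a(\phi)(r,X_r)|\,dr\right)^2\right] < \infty,
\end{equation*}
where the second term is controlled via the Cauchy--Schwarz inequality by $(T-s)\,\mathbbm{E}^{s,x}\!\left[\int_s^T |a(\phi)(r,X_r)|^2 dr\right]$.

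Combining these estimates yields $\mathbbm{E}^{s,x}\!\left[\sup_{t\in[0,T]} |N^\phi_t|^2\right] < \infty$. Applied to a localizing sequence $(\tau_n)$ reducing $N^\phi$, dominated convergence turns the local martingale property into a genuine martingale property, and the uniform $L^2$ bound on $\sup_t N^\phi_t$ ensures it is square integrable. This establishes condition (b) of Definition \ref{MartingaleProblem} for the enlarged domain, concluding the proof. The only step requiring care is the passage from local to square integrable martingale, but this is standard once the uniform $L^2$-domination is in place; no genuine obstacle is expected since all the analytical ingredients (polynomial growth of $\phi$ and $a(\phi)$, polynomial moments of $X$) have already been assembled.
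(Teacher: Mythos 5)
Your proposal is correct and follows essentially the same route as the paper: invoke Remark \ref{BigDomainDiff} to get that $\phi(\cdot,X_{\cdot})-\int_s^{\cdot}a(\phi)(r,X_r)dr$ is a $\mathbbm{P}^{s,x}$-local martingale, then use the polynomial growth of $\phi$ and $a(\phi)$ together with the moment bounds of Lemma \ref{momsde} to upgrade it to a square integrable martingale. The paper compresses the final estimate into an appeal to Jensen's inequality where you spell out the Cauchy--Schwarz bound and the localization/dominated-convergence step, but the substance is identical.
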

\begin{proof}
By Remark \ref{BigDomainDiff}, for any $\phi\in\mathcal{C}_{pol}^{1,2}([0,T]\times\mathbbm{R}^d)$ and $(s,x)\in[0,T]\times\mathbbm{R}^d$, $\phi(\cdot,X_{\cdot})-\int_s^{\cdot}a(\phi)(r,X_r)dr$  is a 
 $\mathbbm{P}^{s,x}$-local martingale. 
Since $\phi$ and $a(\phi)$ have polynomial growth, Lemma \ref{momsde} and Jensen's inequality imply that it is also a square integrable martingale.
\end{proof}
We now consider a couple $(f,g)$ satisfying
$H^{lip}(\|\cdot\|^p,\|\cdot\|^p)$ 
for some $p \ge 1$. 
In this case   Hypothesis \ref{Hpq} can be retranslated into what follows.

\begin{itemize} 
	\item $g$ is Borel with polynomial growth;
	\item $f$ is Borel with polynomial growth in $x$ (uniformly in $t$), and Lipschitz in $y,z$.
\end{itemize}
We consider the PDE
\begin{equation}\label{PDEparabolic}
\left\{\begin{array}{lcr}
\partial_tu + \frac{1}{2}\underset{i,j\leq d}{\sum} \alpha_{i,j}\partial^2_{x_ix_j}u + \underset{i\leq d}{\sum} \mu_i\partial_{x_i}u +f(\cdot,\cdot,u,\alpha\nabla u)=0\\
u(T,\cdot)=g.
\end{array}\right.
\end{equation}
We emphasize that for $u\in\mathcal{C}_{pol}^{1,2}([0,T]\times\mathbbm{R}^d)$, $\alpha\nabla u=\Gamma^{Id}(u)$. The associated decoupled mild equation is given by

\begin{equation}
\left\{
\begin{array}{rcl}
u(s,x)&=&P_{s,T}[g](x)+\int_s^TP_{s,r}\left[f\left(\cdot,\cdot,u,v\right)(r,\cdot)\right](x)dr\\
u(s,x)x_i &=&P_{s,T}[gId_i](x) -\int_s^TP_{s,r}\left[\left(v_i+u\mu_i-Id_if\left(\cdot,\cdot,u,v\right)\right)(r,\cdot)\right](x)dr, i\in[\![1;d]\!],
\end{array}\right.
\end{equation}
$(s,x)\in[0,T]\times\mathbbm{R}^d$,
where $P$ is the transition kernel of the canonical Markov class.

\begin{proposition}\label{PropBrownianExample}
Assume the validity of Hypothesis \ref{HypDiff} and that  $(f,g)$ satisfies 
\\
$H^{lip}(\|\cdot\|^p,\|\cdot\|^p)$
 for some $p \ge 1$.
 Then equation \eqref{PDEparabolic} has a unique decoupled mild solution $u$. 

Moreover it has at most one classical solution which (when it exists)
 equals this function $u$.
\end{proposition}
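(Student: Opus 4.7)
The plan is to deduce the statement by invoking the general results Theorem \ref{MainTheorem} and Corollary \ref{ClassicUnique} once the structural hypotheses of the paper are verified in the present Brownian-diffusion setting. The work is essentially a checklist on the abstract machinery already assembled.

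First, I would note that the martingale problem framework is already in place: the corollary stated just above the proposition asserts that $(\mathbbm{P}^{s,x})_{(s,x)\in[0,T]\times\mathbbm{R}^d}$ solves the martingale problem associated with $(\mathcal{C}^{1,2}_{pol}([0,T]\times\mathbbm{R}^d), a, V_t\equiv t)$ in the sense of Definition \ref{MartingaleProblem}. To check $H^{mom}(\|\cdot\|^p,\|\cdot\|^p)$, I would apply Lemma \ref{momsde} with exponent $2p$ to each coordinate $X^i$; this yields $\mathbbm{E}^{s,x}[\|X_T\|^{2p}]<\infty$, and by Fubini also $\mathbbm{E}^{s,x}[\int_0^T \|X_r\|^{2p}\,dr]<\infty$, so that both moment conditions of Hypothesis \ref{HypMom} are met.

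Second, I would choose $\psi := Id = (Id_1,\ldots,Id_d)$ and verify Hypothesis \ref{HypPhi}. Since $Id_i Id_j \in \mathcal{C}^{1,2}_{pol} = \mathcal{D}(a)$, Corollary \ref{H2Vloc} yields Hypothesis \ref{HypBrackPhi}. The function $a(Id_i) = \mu_i$ is bounded, hence in $\mathcal{L}^2_X$. A direct computation from \eqref{D45} gives $\Gamma^{Id_i}(Id_i) = a(Id_i^2) - 2\, Id_i\, a(Id_i) = \alpha_{i,i}$, which is bounded; through Corollary \ref{RExtendedClassical} this exhibits a bounded representative of $\mathfrak{G}^{Id_i}(Id_i)$. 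The assumption $H^{lip}(\|\cdot\|^p,\|\cdot\|^p)$ on $(f,g)$ is part of the hypotheses.

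Third, for any $u \in \mathcal{C}^{1,2}_{pol}$, an analogous computation from \eqref{D45} shows $\Gamma^{Id_i}(u) = \sum_j \alpha_{i,j}\partial_{x_j}u$, i.e. $\Gamma^{Id}(u) = \alpha\nabla u$, so the PDE \eqref{PDEparabolic} is precisely $Pseudo-PDE(f,g)$ of Definition \ref{MarkovPDE} with $\psi = Id$. Theorem \ref{MainTheorem} then delivers existence and uniqueness of the decoupled mild solution and the representation $u(s,x) = Y^{s,x}_s$, while Corollary \ref{ClassicUnique} immediately provides the claim on classical solutions. No serious obstacle is expected; the only slightly delicate point is confirming that $\mathfrak{G}^{Id_i}(Id_i)$, which a priori is only an equivalence class in $L^0_X$, admits a bounded Borel representative, but this is dispatched by identifying it with the classical carr\'e du champ $\alpha_{i,i}$ via Proposition \ref{bracketindomain} and Corollary \ref{RExtendedClassical}.
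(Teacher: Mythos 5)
Your proposal is correct and follows essentially the same route as the paper: verify that the Markov class solves the martingale problem for $(\mathcal{C}^{1,2}_{pol},a,V_t\equiv t)$, that $(Id_1,\ldots,Id_d)$ satisfy Hypothesis \ref{HypPhi}, that $H^{mom}(\|\cdot\|^p,\|\cdot\|^p)$ holds via Lemma \ref{momsde}, and then invoke Theorem \ref{MainTheorem} and Corollary \ref{ClassicUnique}. The only difference is cosmetic: the paper delegates the verification of Hypothesis \ref{HypPhi} and the identification $\Gamma^{Id}(u)=\alpha\nabla u$ to the discussion preceding the proposition, whereas you spell these checks out inside the proof.
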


\begin{proof}
	$(\mathbbm{P}^{s,x})_{(s,x)\in[0,T]\times\mathbbm{R}^d}$ solves a 
	martingale problem in the sense of Definition \ref{MartingaleProblem} and has
	a transition kernel which is measurable in time.
	Moreover $(Id_1,\cdots,Id_d)$ fulfills Hypothesis \ref{HypPhi}, 
	$(\mathbbm{P}^{s,x})_{(s,x)\in[0,T]\times\mathbbm{R}^d}$ satisfies (by Lemma \ref{momsde}) 
	$H^{mom}(\|\cdot\|^p,\|\cdot\|^p)$
	for some $p \ge 1$ 
	and $(f,g)$ satisfies $H^{lip}(\|\cdot\|^p,\|\cdot\|^p)$.
	So Theorem  \ref{MainTheorem} and Corollary \ref{ClassicUnique} apply.
\end{proof}

\begin{remark}
The unique decoupled mild solution mentioned in the previous proposition admits the probabilistic representation given in Theorem \ref{MainTheorem}.
\end{remark}

\begin{remark} \label{R58}
	In the classical literature, 
the Brownian BSDE \eqref{BSDEIntroN}
has been related to a slightly different type of  parabolic PDE, i.e.
	\begin{equation}\label{PDEexamples}
	\left\{
	\begin{array}{l}
	\partial_tu + \frac{1}{2}\underset{i,j\leq d}{\sum} (\sigma\sigma^\intercal)_{i,j}\partial^2_{x_ix_j}u + \underset{i\leq d}{\sum} \mu_i\partial_{x_i}u + f(\cdot,\cdot,u,\sigma\nabla u)=0 \\
	u(T,\cdot) = g, 
	\end{array}\right.
	\end{equation}
 (where $\sigma=\sqrt{\alpha}$ in the sense of non-negative symmetric matrices)	rather than \eqref{PDEparabolic}. In fact, the only difference is that
 the term $\sigma\nabla u$ replaces $\alpha\nabla u$ in the fourth
argument of the driver 
 $f$.  See the introduction of the present paper, or \cite{pardoux1992backward} for more details.

Our methodology also allows to represent \eqref{PDEexamples}.
Under the probability $\mathbbm{P}^{s,x}$ (for some fixed $(s,x)$), one can introduce the square integrable martingale $\tilde{M}[Id]^{s,x}:=\int_s^{\cdot}(\sigma^\intercal)^+(r,X_r)dM[Id]^{s,x}_r$ where $A\mapsto A^+$ denotes the Moore-Penrose pseudo-inverse operator, see \cite{PseudoInverse} chapter 1.
The Brownian BSDE \eqref{BSDEIntroN} can then be reexpressed here as
\begin{equation} \label{BSDEIntroNN}
Y^{s,x}_t=g(X_T)+\int_t^Tf\left(r,X_r,Y^{s,x}_r,\frac{d\langle M^{s,x},\tilde{M}[Id]^{s,x}\rangle_r}{dr}\right)dr-(M^{s,x}_T-M^{s,x}_t).
\end{equation}
Under the assumptions of Proposition \ref{PropBrownianExample} 
where $\alpha=\sigma\sigma^\intercal$,
it is possible to show that \eqref{BSDEIntroNN} constitutes the
probabilistic representation of \eqref{PDEexamples} 
performing similar arguments as in our approach for
\eqref{PDEparabolic}. In particular we can show existence
and uniqueness of a function $u\in\mathcal{L}^2_X$ for which there exists $v_1,\cdots,v_d\in\mathcal{L}^2_X$ such that for all $(s,x)\in[0,T]\times \mathbbm{R}^d$,
\begin{equation} \label{MildBrownian}
\left\{
\begin{array}{rcl}
u(s,x)&=&P_{s,T}[g](x)+\int_s^TP_{s,r}\left[f\left(\cdot,\cdot,u,(\sigma^\intercal)^+v\right)(r,\cdot)\right](x)dr\\
u(s,x)x_i &=&P_{s,T}[gId_i](x) -\int_s^TP_{s,r}\left[\left(v_i+u\mu_i-Id_if\left(\cdot,\cdot,u,(\sigma^\intercal)^+v\right)\right)(r,\cdot)\right](x)dr, i\in[\![1;d]\!].
\end{array}\right.
\end{equation}
\eqref{MildBrownian} constitutes indeed a suitable decoupled
mild formulation corresponding to \eqref{PDEexamples}.
Moreover, this function $u$, whenever it belongs to $\mathcal{C}_{pol}^{1,2}([0,T]\times\mathbbm{R}^d)$,
 is the unique classical solution of \eqref{PDEexamples}.

This technique is however technically more complicated and
for purpose of illustration we prefer to remain  in our setup (which is by the way close to \eqref{PDEexamples})
to keep our notion of decoupled-mild solution more comprehensible.

\end{remark}

\begin{remark}
It is also possible to treat jump diffusions instead of continuous 
diffusions (see \cite{stroock1975diffusion}), and under suitable conditions
 on the coefficients, it is also possible to prove existence and uniqueness of a decoupled mild solution for equations of type
	\begin{equation}
		\left\{\begin{array}{l}
			\partial_tu + \frac{1}{2}Tr(\alpha\nabla^2u) + (\mu,\nabla u) +\int\left(u(\cdot,\cdot+y)-u-\frac{(y,\nabla u)}{1+\|y\|^2}\right)K(\cdot,\cdot,dy) + f(\cdot,\cdot,u,\Gamma^{Id}(u))=0\\
			u(T,\cdot)=g,
		\end{array}\right.
	\end{equation}
where $K$ is a L\'evy kernel: this means  that for every $(t,x)\in [0,T]\times \mathbbm{R}^d$, $K(t,x,\cdot)$ is a $\sigma$-finite measure 
on $\mathbbm{R}^d\backslash\{0\}$, $\underset{t,x}{\text{sup}}\int \frac{\|y\|^2}{1+\|y\|^2}K(t,x,dy)<\infty$ and for every Borel set $A\in\mathcal{B}(\mathbbm{R}^d\backslash\{0\})$, 
$(t,x)\longmapsto \int_A \frac{\|y\|^2}{1+\|y\|^2}K(t,x,dy)$ is Borel.
In that framework we have 
\begin{equation}
	\Gamma^{Id}:\phi\longmapsto\alpha\nabla \phi+\left(\int y_i(\phi(\cdot,\cdot+y)-\phi(\cdot,\cdot))K(\cdot,\cdot,dy)\right)_{i\in[\![ 1;d]\!]}.
\end{equation}

\end{remark}

\subsection{Parabolic semi-linear PDEs with distributional drift}\label{S5b} 
 
The context of this subsection is the one introduced by
 Flandoli, Russo \& Wolf in \cite{frw1} and \cite{frw2}), see also  \cite{russo_trutnau07}, \cite{diel} for  recent developments. 
We refer to Section 4.3
of \cite{paper2} for 
  a more detailed introduction.
In particular \cite{frw1, frw2} consider stochastic differential equations
with distributional drift, whose solution are possibly
non-semimartingales.
 These authors introduced a suitable framework of 
a martingale problem related to a PDE operator involving 
 a distributional drift $b'$ which is the
derivative of a continuous function. 
\cite{issoglio} approached the $n$-dimensional setting for the first time 
and later developments were discussed by \cite{cannizzaro2015}
studying singular SDEs involving paracontrolled distributions. 
Other Markov processes associated to diffusion operators
which are not semimartingales were produced
when the diffusion operator is in divergence form, see
 e.g.  \cite{rozkosz}.
 \cite{wurzer}  linked  second order ODEs
with a distributional coefficient  and BSDEs.
		In those BSDEs the final horizon was a stopping time. 
\cite{issoglio_jing16} and 
\cite{issoglio}  have considered a class of BSDEs
		involving distributions in their setting.

Let $b,\sigma\in \mathcal{C}^0(\mathbbm{R})$ such that $\sigma>0$.
In \cite{frw1}, the authors introduce a (generalized) notion
for the equation $L f = \ell$, for $f \in C^1({\mathbb R})$.
They suppose the existence of a function
$\Sigma:{\mathbb R} \rightarrow {\mathbb R}$ which formally equals
$2 \int_0^\cdot  \frac{b'}{\sigma^2}(y) dy$ and it is defined
via mollification. A typical situation when $\Sigma$ exists  
arises when either $b$ or $\sigma^2$ have locally bounded variation.
If $\Sigma$ exists then the function $h: {\mathbb R} \rightarrow {\mathbb R}$
defined by $h(0) = 0$ and $h' = e^{-\Sigma}$ is
 $L$-{\it harmonic} function,
in the sense that it fulfills  $Lh = 0$, see Proposition 2.3 of \cite{frw1}.
$\mathcal{D}_L$ is defined as the set of $f\in\mathcal{C}^1(\mathbbm{R})$ such that there exists some $\ell \in\mathcal{C}^0(\mathbbm{R})$ with $Lf= \ell$
 and it is a linear algebra.

 Let $v$ be the unique solution to $Lv=1\text{,  }v(0)=v'(0)=0$, see Remark 2.4 in \cite{frw1}; we will assume
\begin{equation}\label{NonExplosion}
v(-\infty)=v(+\infty)=+\infty,
\end{equation}
which represents a non-explosion condition.
In this case, Proposition 3.13 in \cite{frw1} states that a certain martingale
problem associated to 
$(\mathcal{D}_L,L)$ is well-posed. Its solution will be denoted $(\mathbbm{P}^{s,x})_{(s,x)\in[0,T]\times\mathbbm{R}^d}$.

The canonical process $X$ is a $\mathbbm{P}^{s,x}$-Dirichlet process for every $(s,x)$,
i.e. the sum of a local martingale and a zero quadratic variation
process and it is a semimartingale if and only if $\Sigma$ is locally
of bounded variation, see Corollary 5.11 in \cite{frw2}.
$(\mathbbm{P}^{s,x})_{(s,x)\in[0,T]\times\mathbbm{R}^d}$  defines a canonical Markov class and  Proposition B.2 in \cite{paper2} implies that its transition kernel is measurable in time.

We introduce below the domain that we will indeed use.
\begin{definition}\label{domain}
	We set 
	\begin{equation}
	\mathcal{D}^{max}(a)=\left\{
	\phi\in\mathcal{C}^{1,1}([0,T]\times\mathbbm{R}):\frac{\partial_x\phi}{h'}\in\mathcal{C}^{1,1}([0,T]\times\mathbbm{R})\right\}.
	\end{equation}
On $\mathcal{D}^{max}(a)$, we set 
$L\phi:=\frac{\sigma^2h'}{2}\partial_x(\frac{\partial_x\phi}{h'})$ and $a(\phi):=\partial_t\phi+L\phi$. We then define the smaller domain
	\begin{equation}
	\mathcal{D}(a)=\left\{
	\phi\in\mathcal{D}^{max}(a):\sigma\partial_x\phi\in\mathcal{C}^{0,0}_{pol}([0,T]\times\mathbbm{R})\right\}.
	\end{equation}
	\end{definition}
We formulate here some supplementary assumptions that we will make, the first one being called (TA) in \cite{frw1}.
\begin{hypothesis}\label{TA}
\leavevmode
\begin{itemize}
\item There exists $c_1, C_1>0$ such that $c_1 \le \sigma h' \le C_1$;
\item $\sigma$ has linear growth.
\end{itemize}
\end{hypothesis}
The first item states in particular that $\sigma h'$ is bounded so $h\in\mathcal{D}(a)$. Proposition 3.2 in \cite{frw1} states that for every $(s,x)$, $\langle M[h]^{s,x}\rangle=  \int_s^{\cdot\vee s}(\sigma h')^2(X_r)dr$.
Moreover the AF defined by $\langle M[h] \rangle_u^t=  \int_t^u(\sigma h')^2(X_r)dr,$
is absolutely continuous with respect to $\hat V_t \equiv t$.
Therefore Hypothesis \ref{HypBrackPhi} is satisfied (for $\psi = h$) and $\mathfrak{G}^h(h)=(\sigma h')^2$. Since this function is 
bounded and clearly $a(h)=0$ then $h$ satisfies Hypothesis \ref{HypPhi}.

We will therefore consider the $h$-generalized gradient $\Gamma^h$ associated to $a$; Proposition 4.23 in \cite{paper2} implies the following.
\begin{proposition} \label{PDistrDrift}
	Let $\phi\in\mathcal{D}(\Gamma^h)$, then $\Gamma^h(\phi)=\sigma^2h'\partial_x\phi$.
\end{proposition}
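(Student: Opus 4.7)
The plan is to compute $\Gamma^h(\phi)$ directly from its definition, exploiting the fact that $h$ satisfies $Lh=0$ and depends only on $x$.

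First, since $h$ does not depend on time and $Lh = 0$, we have $a(h) = \partial_t h + Lh = 0$. Therefore by the definition of the $h$-generalized gradient (Definition \ref{CarreDuChamp}),
\begin{equation*}
\Gamma^h(\phi) = a(h\phi) - \phi\, a(h) - h\, a(\phi) = a(h\phi) - h\, a(\phi).
\end{equation*}
Moreover $\partial_t(h\phi) - h\,\partial_t\phi = 0$ since $h$ is time-independent, so the equation reduces to the purely spatial identity
\begin{equation*}
\Gamma^h(\phi) = L(h\phi) - h\, L\phi.
\end{equation*}
Note that this makes sense because $\phi\in\mathcal{D}(\Gamma^h)$ forces both $\phi$ and $h\phi$ to lie in $\mathcal{D}(a)\subset\mathcal{D}^{\max}(a)$, so that all the occurring quantities $\partial_x\phi/h'$ and $\partial_x(h\phi)/h'$ are classically $C^{1,1}$.

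Next, I would expand $L(h\phi)$ using the definition $L\psi = \tfrac{\sigma^2 h'}{2}\partial_x(\partial_x\psi/h')$. Since
\begin{equation*}
\frac{\partial_x(h\phi)}{h'} = \frac{h'\phi + h\,\partial_x\phi}{h'} = \phi + \frac{h\,\partial_x\phi}{h'},
\end{equation*}
applying $\partial_x$ and using the Leibniz rule on the second term yields
\begin{equation*}
\partial_x\!\left(\frac{\partial_x(h\phi)}{h'}\right) = \partial_x\phi + \partial_x\phi + h\,\partial_x\!\left(\frac{\partial_x\phi}{h'}\right) = 2\partial_x\phi + h\,\partial_x\!\left(\frac{\partial_x\phi}{h'}\right).
\end{equation*}
Multiplying by $\sigma^2 h'/2$ gives $L(h\phi) = \sigma^2 h'\partial_x\phi + h\,L\phi$, hence $L(h\phi) - hL\phi = \sigma^2 h'\partial_x\phi$, which is the claim.

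There is no real obstacle: the result is a purely algebraic consequence of the explicit form of $L$ in divergence-like coordinates adapted to the scale function $h$, together with the fact that $h$ is a (time-independent) harmonic function for $L$. The only thing to check is that the pointwise manipulations above are legitimate for $\phi\in\mathcal{D}(\Gamma^h)$, which is immediate from the regularity built into $\mathcal{D}^{\max}(a)$ in Definition \ref{domain}.
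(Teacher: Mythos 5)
Your computation is correct: since $h$ is time-independent with $Lh=0$ (so $a(h)=0$ and $h'=e^{-\Sigma}>0$, avoiding any division issue), the identity $\Gamma^h(\phi)=L(h\phi)-hL\phi=\sigma^2 h'\partial_x\phi$ follows exactly as you derive it, and the regularity needed for the Leibniz manipulations is indeed supplied by $\phi,\,h\phi\in\mathcal{D}^{\max}(a)$. The paper gives no proof of its own here — it simply cites Proposition 4.23 of an earlier companion paper — and your direct algebraic verification from Definition \ref{CarreDuChamp} and the explicit form of $L$ is precisely the argument that citation stands in for, so this is essentially the same approach made self-contained.
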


The deterministic equation considered in this section is a  semilinear PDE with singular (or distributional) drift $b'$ given by
\begin{equation}\label{PDEdistri}
\left\{
\begin{array}{l}
\partial_tu + \frac{1}{2}\sigma^2\partial^2_x u + b'\partial_xu +f(\cdot,\cdot,u,\sigma^2 h'\partial_xu)=0\quad\text{ on }[0,T]\times\mathbbm{R}\\
u(T,\cdot) = g.
\end{array}\right.
\end{equation}

The associated PDE in the decoupled mild sense is given by
\\
\begin{equation}
\left\{
\begin{array}{rcl}
u(s,x)&=&P_{T-s}[g](x)+\int_s^TP_{r-s}\left[f\left(\cdot,\cdot,u,v\right)(r,\cdot)\right](x)dr\\
u(s,x)h(x) &=&P_{T-s}[gh](x) -\int_s^TP_{r-s}\left[\left(v-hf\left(\cdot,\cdot,u,v\right)\right)(r,\cdot)\right](x)dr,
\end{array}\right.
\end{equation}
$(s,x)\in[0,T]\times\mathbbm{R}$,
where $P$ is the (time-homogeneous) transition kernel of the  canonical Markov class.
\\
\\
In order to consider the $BSDE^{s,x}(f,g)$ for functions $(f,g)$  having polynomial growth in $x$, we had shown in \cite{paper2} the following result, stated
 as Proposition 4.24.

\begin{proposition}\label{MomentsDistri}
	We suppose that Hypothesis \ref{TA} is fulfilled. 
	Then, for any  $p\in\mathbbm{N}$ and   $(s,x)\in[0,T]\times\mathbbm{R}$, $\mathbbm{E}^{s,x}[|X_T|^p]<\infty$ and 
	$\mathbbm{E}^{s,x}[\int_s^T|X_r|^pdr]<\infty$. In other words, for any $p\geq 1$, the canonical Markov class $(\mathbbm{P}^{s,x})_{(s,x)\in[0,T]\times\mathbbm{R}^d}$ satisfies $H^{mom}(|\cdot|^p,|\cdot|^p)$, see Hypothesis \ref{HypMom}.
\end{proposition}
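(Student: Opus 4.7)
The plan is to push the moment estimates through the martingale $h(X)$, which under Hypothesis \ref{TA} is very well-behaved, and then transfer them back to $X$ by inverting $h$. The key observation is that $h$ satisfies $a(h)=0$, so $h(X)$ is a continuous square-integrable martingale with bounded quadratic variation, hence has exponential moments of all orders; and $h$ grows at least logarithmically, hence $|X|$ is dominated by an exponential of $|h(X)|$.

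First I would set $Y_t := h(X_t)$ for $t \in [s,T]$ under $\mathbb{P}^{s,x}$. Since $h \in \mathcal{D}(a)$ with $a(h)=0$, the process $Y_\cdot - h(x)$ coincides on $[s,T]$ with the continuous square-integrable martingale $M[h]^{s,x}$ whose angular bracket is $\int_s^{\cdot}(\sigma h')^2(X_r)dr$. By Hypothesis \ref{TA}, $(\sigma h')^2 \leq C_1^2$, so $\langle M[h]^{s,x}\rangle_T \leq C_1^2 T$. A standard exponential inequality for continuous martingales with bounded quadratic variation (applied to $\lambda M[h]^{s,x}$ and $-\lambda M[h]^{s,x}$) then gives
\begin{equation*}
\mathbb{E}^{s,x}\bigl[\exp(\lambda |M[h]^{s,x}_t|)\bigr] \leq 2\exp(\lambda^2 C_1^2 T/2), \qquad t \in [s,T],\ \lambda \in \mathbb{R},
\end{equation*}
so $Y_t = h(x) + M[h]^{s,x}_t$ has exponential moments of every order, uniformly in $t \in [s,T]$.

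Next I would compare $|X_t|$ to $|Y_t|$ by exploiting the second part of Hypothesis \ref{TA}. From $h' = e^{-\Sigma} > 0$ and $\sigma h' \geq c_1$ one gets $h'(y) \geq c_1/\sigma(y)$, and the linear growth $\sigma(y) \leq K(1+|y|)$ yields $h'(y) \geq c_1/(K(1+|y|))$. Since $h(0)=0$ and $h$ is strictly increasing, integrating from $0$ to $x$ (on either side) gives
\begin{equation*}
|h(x)| \geq \tfrac{c_1}{K}\log(1+|x|), \qquad x \in \mathbb{R},
\end{equation*}
and inverting this inequality produces $|x| \leq \exp\bigl((K/c_1)|h(x)|\bigr)$. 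Applied pathwise to $X_t$,
\begin{equation*}
|X_t|^p \leq \exp\bigl(p(K/c_1)|Y_t|\bigr) \leq \exp\bigl(p(K/c_1)|h(x)|\bigr)\exp\bigl(p(K/c_1)|M[h]^{s,x}_t|\bigr).
\end{equation*}

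Combining the two previous steps, for every $p \geq 1$ there is a constant $C(p,x,T)$ (depending on $c_1, C_1, K, h(x)$) such that $\mathbb{E}^{s,x}[|X_t|^p] \leq C(p,x,T)$ for all $t \in [s,T]$. Specialising to $t=T$ gives the first assertion, and for the second, Fubini's theorem yields $\mathbb{E}^{s,x}[\int_s^T |X_r|^p dr] = \int_s^T \mathbb{E}^{s,x}[|X_r|^p]dr \leq C(p,x,T)(T-s) < \infty$. The only real subtlety is Step 3: moments of $Y=h(X)$ are straightforward from martingale theory, but since $h$ grows only logarithmically in $|x|$ (as cannot be improved in general), we do not obtain polynomial control of $|X|$ by $|h(X)|$, only an exponential one; this is precisely why we need the bounded quadratic variation of $M[h]^{s,x}$ (giving exponential moments of $Y$), rather than just a BDG/polynomial estimate.
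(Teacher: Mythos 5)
Your proof is correct. Note that the paper itself does not prove this statement: it is imported verbatim as Proposition 4.26 of \cite{paper2}, so there is no in-text argument to compare against line by line. That said, your route is the natural one for this setting and, to the extent it can be compared with the cited source, it follows the same strategy: since $a(h)=0$, the process $h(X_\cdot)=h(x)+M[h]^{s,x}$ is a continuous square integrable martingale (continuity holding because $X$ is a continuous Dirichlet process here) whose bracket $\int_s^{\cdot}(\sigma h')^2(X_r)dr$ is bounded by $C_1^2T$, so the exponential supermartingale $\exp\bigl(\lambda M[h]^{s,x}_t-\tfrac{\lambda^2}{2}\langle M[h]^{s,x}\rangle_t\bigr)$ yields uniform exponential moments; and the lower bound $h'\ge c_1/\sigma\ge c_1/(K(1+|\cdot|))$ coming from Hypothesis \ref{TA} gives $|h(x)|\ge (c_1/K)\log(1+|x|)$, hence $|x|\le\exp\bigl((K/c_1)|h(x)|\bigr)$, which converts exponential moments of $h(X_t)$ into all polynomial moments of $X_t$, uniformly in $t$; Fubini then handles the time integral. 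Your closing remark is also the right diagnosis: because $\sigma$ is only of linear growth, $h$ may grow only logarithmically, so a BDG-type polynomial estimate on $M[h]^{s,x}$ would not suffice and the exponential integrability coming from the bounded bracket is genuinely needed.
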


Next we have the following.
\begin{proposition}\label{MPnewdomaindistri}
We suppose that Hypothesis \ref{TA} is fulfilled. Then 
	$(\mathbbm{P}^{s,x})_{(s,x)\in[0,T]\times\mathbbm{R}^d}$  solves the Martingale Problem associated to $(a,\mathcal{D}(a),V_t\equiv t)$ in the sense of Definition \ref{MartingaleProblem}.
\end{proposition}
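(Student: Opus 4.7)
\begin{prooff}{}
The strategy is to exploit the change of variables $Y := h(X)$ in order to convert the Dirichlet process $X$ into a continuous martingale on which the classical It\^o formula applies. I first want to argue that $h$ is a $\mathcal{C}^1$-diffeomorphism of $\mathbbm{R}$: since $h' = e^{-\Sigma} > 0$ is continuous and, under \eqref{NonExplosion} and Hypothesis \ref{TA}, $h$ is proper (this should follow because $\sigma h'$ is bounded from above and below so $h$ grows at infinity and $-\infty$), the inverse $h^{-1}$ is well defined and $\mathcal{C}^1$. Moreover from Proposition 3.2 in \cite{frw1} recalled just before Definition \ref{domain}, under every $\mathbbm{P}^{s,x}$ the process $Y_t = h(X_t)$ is a continuous square integrable martingale on $[s,T]$ with $\langle Y\rangle_t = \int_s^t(\sigma h')^2(X_r)dr$, since $Lh = 0$ and $h(X_s)=h(x)$. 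Condition (a) in Definition \ref{MartingaleProblem} is immediate from the definition of the canonical Markov class.

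For condition (b), fix $\phi \in \mathcal{D}(a)$ and $(s,x) \in [0,T]\times \mathbbm{R}$ and set
\begin{equation*}
F(t,y) := \phi(t, h^{-1}(y)),\qquad (t,y)\in[0,T]\times\mathbbm{R}.
\end{equation*}
A direct computation gives $\partial_t F(t,y) = \partial_t\phi(t,h^{-1}(y))$ and $\partial_y F(t,y) = \bigl(\partial_x\phi/h'\bigr)(t,h^{-1}(y))$. Since $\phi\in\mathcal{D}^{max}(a)$, the function $\partial_x\phi/h'$ belongs to $\mathcal{C}^{1,1}([0,T]\times\mathbbm{R})$, so $\partial_y F$ is $\mathcal{C}^{1,1}$ and
\begin{equation*}
\partial_{yy}F(t,y) = \frac{\partial_x(\partial_x\phi/h')(t,h^{-1}(y))}{h'(h^{-1}(y))},
\end{equation*}
is continuous. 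Hence $F\in\mathcal{C}^{1,2}([0,T]\times\mathbbm{R})$, which is exactly what is needed to apply the classical It\^o formula to $F(t,Y_t)$ on $[s,T]$ under $\mathbbm{P}^{s,x}$.

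Carrying this out, $\phi(t,X_t) = F(t,Y_t)$ and using $d\langle Y\rangle_r = (\sigma h')^2(X_r)dr$ the bounded variation part of the semi-martingale decomposition reads
\begin{equation*}
\int_s^t \partial_t\phi(r,X_r)\,dr + \frac12\int_s^t \frac{\partial_x(\partial_x\phi/h')(r,X_r)}{h'(X_r)}(\sigma h')^2(X_r)\,dr = \int_s^t a(\phi)(r,X_r)\,dr,
\end{equation*}
by the very definition of $L$ in Definition \ref{domain}. The martingale part is $N^\phi_t := \int_s^t (\partial_x\phi/h')(r,X_r)\,dY_r$, which is a priori only a local martingale. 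Thus $\phi(\cdot,X_\cdot)-\phi(s,x)-\int_s^{\cdot}a(\phi)(r,X_r)dr$ coincides on $[s,T]$ with $N^\phi$.

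It remains to upgrade $N^\phi$ to a genuine element of $\mathcal{H}^2$. Its angular bracket is
\begin{equation*}
\langle N^\phi\rangle_t = \int_s^t (\partial_x\phi/h')^2(r,X_r)(\sigma h')^2(X_r)\,dr = \int_s^t (\sigma\partial_x\phi)^2(r,X_r)\,dr.
\end{equation*}
Since $\phi\in\mathcal{D}(a)$, we have $\sigma\partial_x\phi\in\mathcal{C}^{0,0}_{pol}([0,T]\times\mathbbm{R})$, so there exist $C>0$ and $p\in\mathbbm{N}$ with $(\sigma\partial_x\phi)^2(r,y) \leq C(1+|y|^{2p})$. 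By Proposition \ref{MomentsDistri}, $\mathbbm{E}^{s,x}\bigl[\int_s^T |X_r|^{2p}\,dr\bigr] < \infty$, so $\mathbbm{E}^{s,x}[\langle N^\phi\rangle_T]<\infty$ and $N^\phi\in\mathcal{H}^2_0$. This establishes (b). The main technical point is the verification that $F$ is indeed $\mathcal{C}^{1,2}$ so that ordinary It\^o is available; once this is in place, everything else is an identification of terms together with the moment control provided by Proposition \ref{MomentsDistri}.
\end{prooff}
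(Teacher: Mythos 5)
Your proof is correct, and its second half is exactly the paper's argument: the paper also reduces everything to showing that the local martingale $\phi(\cdot,X_\cdot)-\int_s^\cdot a(\phi)(r,X_r)dr$ has angular bracket $\int_s^\cdot(\sigma\partial_x\phi)^2(r,X_r)dr$ and then invokes the polynomial growth of $\sigma\partial_x\phi$ (built into $\mathcal{D}(a)$) together with Proposition \ref{MomentsDistri} to conclude square integrability. Where you differ is in the first half: the paper simply cites Proposition 4.24 (for the local martingale property) and Definition 4.22, Propositions 4.23 and 2.7 of \cite{paper2} (for the bracket identity), whereas you re-derive these facts from scratch via the harmonic transform $Y=h(X)$, the $\mathcal{C}^{1,2}$ regularity of $F(t,y)=\phi(t,h^{-1}(y))$ guaranteed by the definition of $\mathcal{D}^{max}(a)$, and the classical It\^o formula. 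This is essentially the mechanism underlying the cited results, so the route is the same in spirit but self-contained; what it buys is transparency (one sees exactly why $\mathcal{D}^{max}(a)$ is defined through the condition $\partial_x\phi/h'\in\mathcal{C}^{1,1}$), at the cost of having to justify that $h$ is a $\mathcal{C}^1$-diffeomorphism. On that last point your parenthetical is slightly too quick: boundedness of $\sigma h'$ alone does not give properness of $h$; you need to combine the lower bound $\sigma h'\geq c_1$ with the linear growth of $\sigma$ to get $h'\geq c_1/(C(1+|x|))$, whose integral diverges. In fact surjectivity of $h$ is not even needed, since $Y$ lives in the open interval $h(\mathbbm{R})$ and It\^o's formula applies to $F$ defined there. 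These are cosmetic points; the argument is sound.
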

\begin{proof}
Let $(s,x)\in[0,T]\times \mathbbm{R}$ be fixed.
Proposition 4.23 in \cite{paper2} implies that for any $\phi\in\mathcal{D}(a)$, $\phi(\cdot,X_{\cdot})-\int_s^{\cdot}a(\phi)(r,X_r)dr$ is a (continuous) $\mathbbm{P}^{s,x}$-local martingale, so taking Definition \ref{MartingaleProblem} into account, it is enough to show that  this local martingale is a square integrable martingale.
Considering Definition 4.21, Proposition 4.22 and Proposition 2.6 in \cite{paper2}, we know that the angular bracket of this local martingale is given by $\int_s^{\cdot}(\sigma\partial_x\phi)^2(r,X_r)dr$. Since $\phi\in\mathcal{D}(a)$ then $\sigma\partial_x\phi$ has polynomial growth, so by Proposition \ref{MomentsDistri}, $\int_s^T(\sigma\partial_x\phi)^2(r,X_r)dr\in L^1$ and this implies that the 
aforementioned local martingale is a square integrable martingale.
\end{proof}

We can now state the  main result of this section. 
\begin{proposition} \label{P616}
	Assume the non-explosion condition \eqref{NonExplosion}, Hypothesis \ref{TA} and that $(f,g)$ satisfies
 $H^{lip}(|\cdot|^p,|\cdot|^p)$  for some  $p\geq 1$,
see Hypothesis \ref{Hpq}. Then, equation \eqref{PDEdistri} has a unique decoupled mild solution $u$.
 Moreover, there is at most one classical solution which can only be equal to $u$.
\end{proposition}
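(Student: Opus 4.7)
The plan is to verify the hypotheses of Theorem \ref{MainTheorem} and Corollary \ref{ClassicUnique} and then apply them directly, entirely parallel to the proof of Proposition \ref{PropBrownianExample} in the Brownian case. In the present setting the role of the vector $\psi = (\psi_1,\ldots,\psi_d)\in\mathcal{D}(a)^d$ is played by the one-dimensional $\psi := h$, and the PDE \eqref{PDEdistri} is precisely $Pseudo\text{-}PDE(f,g)$ (with $\Gamma^{h}(u)=\sigma^2 h'\partial_x u$) associated to the operator $a=\partial_t+L$ with domain $\mathcal{D}(a)$ from Definition \ref{domain}.

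First I would collect the Markov-analytic ingredients already established in the subsection. Under the non-explosion condition \eqref{NonExplosion}, Proposition 3.13 of \cite{frw1} provides a canonical Markov class $(\mathbbm{P}^{s,x})$ solving the martingale problem for $(\mathcal{D}_L,L)$, and Proposition B.2 of \cite{paper2} ensures that its transition kernel is measurable in time (so Notation \ref{N513} makes sense). Under Hypothesis \ref{TA}, Proposition \ref{MPnewdomaindistri} then upgrades this to the statement that $(\mathbbm{P}^{s,x})_{(s,x)\in[0,T]\times\mathbbm{R}}$ solves the martingale problem associated to the triplet $(\mathcal{D}(a),a,V_t\equiv t)$ in the sense of Definition \ref{MartingaleProblem}, which is the framework required by Section \ref{S3}.

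Next I would verify Hypothesis \ref{HypPhi} for the single function $\psi=h\in\mathcal{D}(a)$. The three items were essentially remarked already in the text preceding Proposition \ref{MPnewdomaindistri}: (i) $\langle M[h]^{s,x}\rangle=\int_s^{\cdot\vee s}(\sigma h')^2(X_r)dr$ is absolutely continuous with respect to $dV_r=dr$, so Hypothesis \ref{HypBrackPhi} holds; (ii) $a(h)=0\in\mathcal{L}^2_X$; (iii) $\mathfrak{G}^h(h)=(\sigma h')^2$ is bounded by the first item of Hypothesis \ref{TA}. Moreover, Proposition \ref{MomentsDistri} gives exactly $H^{mom}(|\cdot|^p,|\cdot|^p)$ for every $p\ge 1$, and the assumption on $(f,g)$ provides $H^{lip}(|\cdot|^p,|\cdot|^p)$.

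With all hypotheses verified, the first conclusion (existence and uniqueness of the decoupled mild solution, explicitly given by $u(s,x)=Y^{s,x}_s$) is an immediate application of Theorem \ref{MainTheorem}, and the second conclusion (at most one classical solution, necessarily equal to $u$) is Corollary \ref{ClassicUnique}. There is no genuine obstacle here since all technical work has been absorbed into the preliminary propositions of the subsection; the only point worth double-checking is that the $\psi$-generalized gradient $\Gamma^h(u)=\sigma^2 h'\partial_x u$ appearing in \eqref{PDEdistri} coincides with the quantity used in the definition of $Pseudo\text{-}PDE(f,g)$, which is exactly the content of the proposition preceding \eqref{PDEdistri}.
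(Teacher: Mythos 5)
Your proposal is correct and follows exactly the paper's argument: the paper's proof likewise just invokes Theorem \ref{MainTheorem} and Corollary \ref{ClassicUnique}, justified by Propositions \ref{MPnewdomaindistri} and \ref{MomentsDistri} and the previously noted fact that $h$ satisfies Hypothesis \ref{HypPhi}. Your version merely spells out these verifications in more detail, which is fine.
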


\begin{proof}
	The assertions come from Theorem \ref{MainTheorem} and Corollary
 \ref{ClassicUnique} which applies thanks to Propositions \ref{MPnewdomaindistri}, \ref{MomentsDistri}, and the fact that $h$ satisfies Hypothesis \ref{HypPhi}.
\end{proof}
\begin{remark} \label{R616}
The unique decoupled mild solution $u$ can be of course  represented
by \eqref{E525},  Theorem \ref{MainTheorem}.

\end{remark}

\begin{appendices}

\section{Proof of Theorem
 \ref{uniquenessBSDE} and related technicalities}	
\label{A}

 We adopt here the same notations as at the beginning of Section \ref{S1}.
 We will denote 
 $L^2(d\hat V\otimes d\mathbbm{P}) $
  the quotient space of ${\mathcal L}^2(d\hat V\otimes d\mathbbm{P})$ with respect to the subspace of
  processes equal to zero $d\hat V\otimes d\mathbbm{P}$ a.e. \\
 $L^2(d\hat V\otimes d\mathbbm{P})$ is a Hilbert space equipped
 with its usual norm. 
  $L^{2,cadlag}(d\hat V\otimes d\mathbbm{P})$) will stand for the subspace of  $L^{2}(d\hat V\otimes d\mathbbm{P})$)  of elements having a c\`adl\`ag representative. We emphasize that  $L^{2,cadlag}(d\hat V\otimes d\mathbbm{P})$ is not a closed subspace of $L^{2}(d\hat V\otimes d\mathbbm{P})$.
The application which to a process associate its class will be denoted $\phi\mapsto\dot{\phi}$.

\begin{proposition}\label{BSDEexpectations}
If $(Y,M)$ solves $BSDE(\xi,\hat{f}, V,\hat{M})$, and if we denote
\\
 $\hat{f}\left(r,\cdot,Y_r,\frac{d\langle M,\hat{M} \rangle}{d\hat V}(r)\right)$ by $\hat{f}_r$, then for any $t\in[0,T]$, a.s. we have 
\begin{equation} \label{E32bis}
\left\{\begin{array}{rcl}
Y_t &=& \mathbbm{E}\left[\xi+\int_t^T\hat{f}_rd\hat V_r\middle|\mathcal{F}_t\right] \\
M_t &=& \mathbbm{E}\left[\xi+\int_0^T\hat{f}_rd\hat V_r\middle|\mathcal{F}_t\right]-\mathbbm{E}\left[\xi+\int_0^T\hat{f}_rd\hat V_r\middle|\mathcal{F}_0\right].
\end{array}\right.
\end{equation}
\end{proposition}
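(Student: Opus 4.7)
My plan is to derive both identities directly from the defining equation \eqref{BSDEcadlag} by taking conditional expectations, after verifying that the random variable $\xi+\int_0^T\hat f_r\, d\hat V_r$ is integrable.

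First I would check integrability. By item 3 of Hypothesis \ref{HypBSDE}, $|\hat f_r|\le |\hat f(r,\cdot,0,0)|+K^Y|Y_r|+K^Z\|\frac{d\langle M,\hat M\rangle}{d\hat V}(r)\|$. The first term is in $\mathcal L^2(d\hat V\otimes d\mathbbm P)$ by item 2, the second by the definition of a BSDE solution (Definition \ref{firstdefBSDE}), and for the third I would invoke the pointwise Kunita--Watanabe inequality, which gives $\left|\frac{d\langle M,\hat M^i\rangle}{d\hat V}\right|^{2}\le \frac{d\langle M\rangle}{d\hat V}\cdot\frac{d\langle \hat M^i\rangle}{d\hat V}$; together with $M\in\mathcal H^2_0$ and item 4 of Hypothesis \ref{HypBSDE}, this puts $\frac{d\langle M,\hat M\rangle}{d\hat V}$ in $\mathcal L^2(d\hat V\otimes d\mathbbm P)$. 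A Cauchy--Schwarz argument combined with the boundedness of $\hat V$ then yields $\int_0^T\hat f_r\, d\hat V_r\in L^2$, and hence $\xi+\int_0^T\hat f_r\,d\hat V_r\in L^2\subset L^1$.

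Once integrability is secured, the first identity is immediate. Evaluating \eqref{BSDEcadlag} at time $t$ gives $Y_t=\xi+\int_t^T\hat f_r\,d\hat V_r-(M_T-M_t)$. Both sides are $\mathcal F_t$-integrable, so I would take $\mathbbm E[\,\cdot\,|\mathcal F_t]$: the left side is already $\mathcal F_t$-measurable (since $Y$ is adapted), and on the right $\mathbbm E[M_T-M_t|\mathcal F_t]=0$ because $M\in\mathcal H^2_0$ is a martingale. This yields the first line of \eqref{E32bis}.

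For the second identity, I would evaluate \eqref{BSDEcadlag} at $t=0$ to obtain $Y_0=\xi+\int_0^T\hat f_r\,d\hat V_r-M_T$ (using $M_0=0$), then rearrange into $M_T=\xi+\int_0^T\hat f_r\,d\hat V_r-Y_0$. Conditioning on $\mathcal F_t$ and using the martingale property $\mathbbm E[M_T|\mathcal F_t]=M_t$ gives $M_t=\mathbbm E\bigl[\xi+\int_0^T\hat f_r\,d\hat V_r\bigm|\mathcal F_t\bigr]-\mathbbm E[Y_0|\mathcal F_t]$. Finally, applying the first formula at $t=0$ identifies $Y_0$ with $\mathbbm E[\xi+\int_0^T\hat f_r\,d\hat V_r|\mathcal F_0]$, which is $\mathcal F_0$-measurable and hence unaffected by the extra conditioning, giving the desired expression. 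The only real obstacle is the integrability bookkeeping at the start; the conditional-expectation manipulations themselves are one-liners once that is in place.
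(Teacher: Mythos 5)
Your proposal is correct and follows essentially the same route as the paper: take conditional expectations in \eqref{BSDEcadlag} at time $t$ (using adaptedness of $Y$ and the martingale property of $M$) for the first identity, then evaluate at $t=0$ and condition $M_T$ on $\mathcal{F}_t$ for the second. Your preliminary integrability check is the content of Lemma \ref{ZinL2} and Proposition \ref{L1}, which the paper's own proof leaves implicit.
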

\begin{proof} 
	Since $Y_t=\xi +\int_t^T\hat{f}_rd\hat V_r - (M_T-M_t)$ a.s.,   $Y$ being
	an adapted process and $M$ a martingale, taking the expectation in \eqref{BSDEcadlag} at time $t$, we directly
	get  $Y_t = \mathbbm{E}\left[\xi+\int_t^T\hat{f}_rd\hat V_r\middle|\mathcal{F}_t\right]$ and in particular that $Y_0 = \mathbbm{E}\left[\xi+\int_0^T\hat{f}_rd\hat V_r\middle|\mathcal{F}_0\right]$. Since $M_0=0$, looking at the BSDE at time 0 we get
	$$ M_T 
	= \xi +\int_0^T\hat{f}_rd\hat V_r -\mathbbm{E}\left[\xi+\int_0^T\hat{f}_rd\hat V_r\middle|\mathcal{F}_0\right].$$
	Taking the expectation with respect to $\mathcal{F}_t$ in the above inequality,
	gives the second line of \eqref{E32bis}.
\end{proof}

\begin{lemma}\label{ZinL2}
	Let $M\in\mathcal{H}^2$ and $\phi$ be a bounded positive process.
	Then there exists a constant $C>0$ such that for any $i\in[\![ 1;d]\!]$, 
	\\
	$\int_0^T\phi_r\left(\frac{d\langle M,\hat{M}^i\rangle}{d\hat V}(r)\right)^2d\hat V_r\leq C\int_0^T\phi_rd\langle M\rangle_r$. In particular, $\frac{d\langle M,\hat{M}^i\rangle}{d\hat V}$ belongs to 
	$L^2(d\hat V\otimes d\mathbbm{P})$.
\end{lemma}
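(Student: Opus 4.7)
The plan is to apply the Kunita-Watanabe inequality in its pointwise density form. I would begin by choosing a common dominating predictable $\sigma$-finite measure $\mu$ on $\Omega \times [0,T]$ --- for concreteness $\mu := d\hat V + d\langle M\rangle + d\langle \hat{M}^i\rangle$ regarded as a positive measure on the predictable $\sigma$-field --- and writing the corresponding predictable Radon-Nikodym densities $V' := \frac{d\hat V}{d\mu}$, $M' := \frac{d\langle M\rangle}{d\mu}$, $N' := \frac{d\langle \hat{M}^i\rangle}{d\mu}$, and $Q' := \frac{d\langle M, \hat{M}^i\rangle}{d\mu}$. The pointwise Kunita-Watanabe inequality then yields $(Q')^2 \leq M' N'$ $\mu$-a.e.

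Next, applying Proposition \ref{Decomposition} to $\langle \hat{M}^i\rangle$ and to $\langle M, \hat{M}^i\rangle$, both with respect to $\hat V$, and using that the perpendicular parts are singular with respect to $d\hat V$ (hence are supported on $\{V' = 0\}$), I would obtain the identifications $N' = \frac{d\langle \hat{M}^i\rangle}{d\hat V} V'$ and $Q' = Z V'$ on $\{V' > 0\}$, where $Z := \frac{d\langle M, \hat{M}^i\rangle}{d\hat V}$. Hypothesis \ref{HypBSDE}(4) then provides a constant $K$ such that $\frac{d\langle \hat{M}^i\rangle}{d\hat V} \leq K$, so that $N' \leq K V'$ on $\{V' > 0\}$.

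The estimate then reduces to a pointwise computation: on $\{V' > 0\}$,
\[
\phi Z^2 V' \;=\; \phi \frac{(Q')^2}{V'} \;\leq\; \phi \frac{M' N'}{V'} \;\leq\; K \phi M',
\]
while on $\{V' = 0\}$ the quantity $\phi Z^2 V'$ vanishes. Integrating against $d\mu$ yields $\int_0^T \phi_r Z_r^2 d\hat V_r \leq K \int_0^T \phi_r d\langle M\rangle_r$, i.e.\ the claim with $C = K$. The ``in particular'' statement will then follow by taking $\phi \equiv 1$, applying Fubini after taking $\mathbbm{P}$-expectations, and using that $\mathbbm{E}[\langle M\rangle_T] < \infty$ since $M \in \mathcal{H}^2$.

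The only delicate step is the bookkeeping around the Lebesgue decomposition, namely checking that the $\perp \hat V$ parts of both $\langle \hat{M}^i\rangle$ and $\langle M,\hat{M}^i\rangle$ are carried by $\{V' = 0\}$; this is a direct consequence of the uniqueness statement in Proposition \ref{Decomposition}. Everything else is a routine application of Kunita-Watanabe together with Hypothesis \ref{HypBSDE}(4).
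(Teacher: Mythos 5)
Your proof is correct and follows essentially the same route as the paper: the pointwise Kunita--Watanabe inequality for the Radon--Nikodym densities of the angular brackets, combined with the boundedness of $\frac{d\langle \hat{M}^i\rangle}{d\hat V}$ from Hypothesis \ref{HypBSDE}. The only difference is that the paper imports the density-form inequality $\left(\frac{d\langle M,\hat{M}^i\rangle}{d\hat V}\right)^2\leq \frac{d\langle \hat{M}^i\rangle}{d\hat V}\,\frac{d\langle M\rangle}{d\hat V}$ from Proposition B.1 and Remark 3.3 of \cite{paper1preprint}, whereas you re-derive it via a common dominating measure; the bookkeeping you flag around the singular parts is exactly the content of that cited result.
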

\begin{proof}
	We fix $i\in[\![ 1;d]\!]$.
	By Hypothesis \ref{HypBSDE} 
	$\frac{d\langle \hat{M}^i\rangle}{d\hat V}$ is bounded; using
	Proposition B.1  
	and Remark 3.3 in \cite{paper1preprint}, we show the existence of $C>0$ such that 
	\begin{equation}
	\begin{array}{rcl}
	\int_0^T\phi_r\left(\frac{d\langle M,\hat{M}^i\rangle}{d\hat V}(r)\right)^2d\hat V_r&\leq&\int_0^T\phi_r \frac{d\langle\hat{M}^i\rangle}{d\hat V}(r)\frac{d\langle M\rangle}{d\hat V}(r)d\hat V_r\\
	&\leq&C\int_0^T \phi_r\frac{d\langle M\rangle}{d\hat V}(r)d\hat V_r\\
	&\leq&C\int_0^T\phi_rd\langle M\rangle_r.
	\end{array}
	\end{equation}
	In particular, setting $\phi=1$, we have $\int_0^T\left(\frac{d\langle M,\hat{M}^i\rangle}{d\hat V}(r)\right)^2d\hat V_r\leq C\langle M\rangle_T$ which belongs to $L^1$ since $M\in\mathcal{H}^2_0$.
\end{proof}

We fix for now a couple $(\dot U,N)\in L^{2}(d\hat V\otimes d\mathbbm{P})\times \mathcal{H}^2_0$ and we consider a   representative $U$ of $\dot U$.
Until Proposition \ref{supY} included, we will use the notation
$\hat f_r := \hat{f}\left(r,\cdot,U_r,\frac{d\langle N,\hat{M} \rangle}{d\hat V}(r)\right)$.
\begin{proposition}\label{L1}
For any $t\in[0,T]$, $\int_t^T\hat{f}^2_r d\hat V_r$ 
belongs to
  $L^1$ and $\left(\int_t^T\hat{f}_r d\hat V_r\right)$ is in $L^2$.
\end{proposition}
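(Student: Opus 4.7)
\begin{prooff}{}
The plan is to bound $\hat f_r$ pointwise using the Lipschitz hypothesis and then integrate, controlling each resulting term via the standing assumptions on $U$, $N$, and $\hat V$.

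First, by the Lipschitz condition in Hypothesis \ref{HypBSDE} (item 3), applied between the points $(U_r,\frac{d\langle N,\hat M\rangle}{d\hat V}(r))$ and $(0,0)$, we have $\mathbbm{P}$-a.s.\ for a.e.\ $r$,
\begin{equation*}
|\hat f_r|\ \leq\ |\hat f(r,\cdot,0,0)|+K^Y|U_r|+K^Z\Bigl\|\tfrac{d\langle N,\hat M\rangle}{d\hat V}(r)\Bigr\|.
\end{equation*}
Squaring and using $(a+b+c)^2\leq 3(a^2+b^2+c^2)$, there exists a constant $C>0$ (depending only on $K^Y,K^Z,d$) such that
\begin{equation*}
\int_t^T \hat f_r^2\,d\hat V_r\ \leq\ C\Bigl(\int_0^T\hat f(r,\cdot,0,0)^2\,d\hat V_r+\int_0^T U_r^2\,d\hat V_r+\sum_{i=1}^d\int_0^T\Bigl(\tfrac{d\langle N,\hat M^i\rangle}{d\hat V}(r)\Bigr)^2d\hat V_r\Bigr).
\end{equation*}
The first term on the right has finite expectation by Hypothesis \ref{HypBSDE} item 2; the second by the assumption $U\in\mathcal{L}^2(d\hat V\otimes d\mathbbm P)$; and the third by Lemma \ref{ZinL2} (with $\phi\equiv 1$), which relies on item 4 of Hypothesis \ref{HypBSDE} and on $N\in\mathcal H^2_0$. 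Taking expectations therefore yields $\mathbbm E\bigl[\int_t^T\hat f_r^2\,d\hat V_r\bigr]<\infty$, which is the first assertion.

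For the second assertion, I would apply the Cauchy-Schwarz inequality (in the variable $r$, for fixed $\omega$) to write
\begin{equation*}
\Bigl(\int_t^T\hat f_r\,d\hat V_r\Bigr)^2\ \leq\ (\hat V_T-\hat V_t)\int_t^T\hat f_r^2\,d\hat V_r\ \leq\ \|\hat V\|_\infty\int_t^T\hat f_r^2\,d\hat V_r,
\end{equation*}
where $\|\hat V\|_\infty<\infty$ because $\hat V$ is assumed bounded. Taking expectations and using the first part gives that $\int_t^T\hat f_r\,d\hat V_r\in L^2$. There is no real obstacle here: the only delicate input is the $L^2$ integrability of $\frac{d\langle N,\hat M^i\rangle}{d\hat V}$, which is precisely what Lemma \ref{ZinL2} provides. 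Everything else is a direct consequence of the Lipschitz structure of $\hat f$ combined with the standing hypotheses on $(\xi,\hat f,\hat V,\hat M)$.
\end{prooff}
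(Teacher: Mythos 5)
Your proof is correct and follows essentially the same route as the paper's: the Lipschitz condition gives the three-term bound on $\hat f_r^2$, Lemma \ref{ZinL2} controls the Radon--Nikodym terms via $\langle N\rangle_T$, and the boundedness of $\hat V$ together with Cauchy--Schwarz (the paper phrases it as Jensen's inequality) passes from the $L^1$ bound on $\int_t^T\hat f_r^2\,d\hat V_r$ to the $L^2$ bound on $\int_t^T\hat f_r\,d\hat V_r$. The only difference is the order of presentation, which is immaterial.
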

\begin{proof}
	By Jensen's inequality and by Lemma \ref{ZinL2},
taking into account the Lipschitz conditions on $\hat{f}$ in 
	Hypothesis \ref{HypBSDE},
	there exist  positive constants
	$C,C',C"$ 
	such that, for any $t\in[0,T]$, we have  
	\\
	\begin{equation}
	\begin{array}{rcl}
 \left(\int_t^T\hat{f}_rd\hat V_r\right)^2
	 &\leq& C \int_t^T\hat{f}_r^2d\hat V_r
 \\
	&\leq& C'\left(\int_t^T\hat{f}^2\left(r,\cdot,0,0\right)d\hat V_r + \int_t^TU_r^2d\hat V_r + \underset{i=1}{\overset{d}{\sum}}\int_t^T\left(\frac{d\langle N,\hat{M}^i \rangle}{d\hat V}(r)\right)^2d\hat V_r\right)\\
	&\leq& C"\left(\int_t^T\hat{f}^2\left(r,\cdot,0,0\right)d\hat V_r + \int_t^TU_r^2d\hat V_r + (\langle N\rangle_T-\langle N\rangle_t)\right).
	\end{array}
	\end{equation}
	All terms on the right-hand side are in $L^1$.
	Indeed,  $N$ is taken in $\mathcal{H}^2$, $\dot{U}$  in $L^2(d\hat V\otimes d\mathbbm{P})$ and by Hypothesis \ref{HypBSDE}, $f(\cdot,\cdot,0,0)$ is in ${\mathcal L}^2(d\hat V\otimes d\mathbbm{P})$.
	This concludes the proof.
\end{proof}

We can therefore state
the following definition.

\begin{definition}\label{defYM} 
 The random function
\begin{equation} \label{Emartingale} 
t \mapsto  \mathbbm{E}\left[\xi + \int_0^T \hat{f}_r d\hat V_r\middle|\mathcal{F}_t\right]-\mathbbm{E}\left[\xi + \int_0^T \hat{f}_rd\hat V_r\middle|\mathcal{F}_0\right],
\end{equation}
 is a square integrable martingale by Proposition \ref{L1}. 
Since the stochastic basis fulfills
 the usual conditions, by  Theorem 4 in Chapter IV of \cite{dellmeyerB},
\eqref{Emartingale}   admits a c\`adl\`ag version, that we denote $M$.
We denote by $Y$ the c\`adl\`ag process defined by \\
$Y_t = \xi+\int_t^T\hat{f}_r d\hat V_r - (M_T - M_t)$.
This will be called the {\bf c\`adl\`ag reference process} and we will omit its 
dependence to $(\dot U,N)$.
\end{definition}

\begin{proposition}\label{L2}
$Y$ and $M$  are square integrable processes.
\end{proposition}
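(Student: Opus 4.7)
The plan is to show the two claims by directly bounding the $L^2$-norms of $M_t$ and $Y_t$ using the ingredients already assembled in Proposition \ref{L1} (which gives $\int_0^T \hat f_r\,d\hat V_r \in L^2$) and the assumption $\xi \in L^2$ from Hypothesis \ref{HypBSDE}.

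First, I would handle $M$. By definition, $M_t$ is a cadlag version of $\mathbbm{E}\left[\xi + \int_0^T \hat f_r\,d\hat V_r \mid \mathcal{F}_t\right] - \mathbbm{E}\left[\xi + \int_0^T \hat f_r\,d\hat V_r \mid \mathcal{F}_0\right]$. Since $\xi \in L^2$ and, by Proposition \ref{L1} applied with $t=0$, $\int_0^T \hat f_r\,d\hat V_r \in L^2$, the random variable $\xi + \int_0^T \hat f_r\,d\hat V_r$ lies in $L^2$. Both conditional expectations are therefore in $L^2$ (conditional expectation being a contraction on $L^2$), and hence $M_t \in L^2$ for every $t \in [0,T]$.

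Next, for $Y$, I would use the defining identity $Y_t = \xi + \int_t^T \hat f_r\,d\hat V_r - (M_T - M_t)$ and observe that each of the three summands belongs to $L^2$: $\xi$ by Hypothesis \ref{HypBSDE}, $\int_t^T \hat f_r\,d\hat V_r$ by Proposition \ref{L1}, and $M_T, M_t$ by the previous paragraph. A triangle-inequality argument (e.g. $\|Y_t\|_{L^2} \leq \|\xi\|_{L^2} + \|\int_t^T \hat f_r\,d\hat V_r\|_{L^2} + \|M_T\|_{L^2} + \|M_t\|_{L^2}$) then yields $Y_t \in L^2$.

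There is essentially no obstacle here: the statement is a bookkeeping consequence of Proposition \ref{L1} and the $L^2$-contraction property of conditional expectation. The only care required is to be explicit that Proposition \ref{L1} is applied with $t=0$ to place $\int_0^T \hat f_r\,d\hat V_r$ in $L^2$, so that the martingale driving $M$ has square-integrable terminal value.
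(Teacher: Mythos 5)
Your proof is correct and follows essentially the same route as the paper: both arguments rest on Proposition \ref{L1} together with $\xi\in L^2$, the only cosmetic difference being that the paper bounds $\mathbbm{E}[Y_t^2]$ via the representation $Y_t=\mathbbm{E}\left[\xi+\int_t^T\hat f_r\,d\hat V_r\middle|\mathcal{F}_t\right]$ and Jensen's inequality, whereas you use the triangle inequality on the defining identity after checking $M_t\in L^2$ through the $L^2$-contraction property of conditional expectation.
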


\begin{proof}
	We already know that $M$ is a square integrable martingale. 
	As we have seen in Proposition \ref{L1}, $\int_t^T\hat{f}_rd\hat V_r$ belongs to $L^2$ for any $t\in[0,T]$ and by Hypothesis \ref{HypBSDE}, $\xi\in L^2$. So by \eqref{E32bis} and   Jensen's inequality for conditional expectation we have
	\begin{equation*}
	\begin{array}{rcl}
	\mathbbm{E}\left[Y_t^2\right] &=& \mathbbm{E}\left[\mathbbm{E}\left[\xi+\int_t^T\hat{f}_rd\hat V_r\middle|\mathcal{F}_t\right]^2\right]\\
	& \leq & \mathbbm{E}\left[\mathbbm{E}\left[\left(\xi+\int_t^T\hat{f}_rd\hat V_r\right)^2\middle|\mathcal{F}_t\right]\right] \\
	& \leq &\mathbbm{E}\left[2 \xi^2+ 2 \int_t^T\hat{f}^2_rd\hat V_r \right],
	\end{array}
	\end{equation*}
	which is finite.
\end{proof}

\begin{proposition}\label{supY}
$\underset{t\in[0,T]}{\text{sup }}|Y_t|\in L^2$ and in particular, $Y\in\mathcal{L}^{2,cadlag}(d\hat{V}\otimes\mathbbm{P})$.
\end{proposition}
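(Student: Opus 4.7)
The plan is to bound $Y_t$ pointwise by a conditional expectation of a fixed $L^2$ random variable, then apply Doob's maximal inequality.

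First, by the formula for $Y$ from Proposition \ref{BSDEexpectations}, we have $Y_t = \mathbbm{E}\left[\xi + \int_t^T \hat f_r d\hat V_r \middle| \mathcal{F}_t\right]$ a.s., hence
\begin{equation*}
|Y_t| \leq \mathbbm{E}\left[|\xi| + \int_0^T |\hat f_r| d\hat V_r \middle| \mathcal{F}_t\right] \quad \text{a.s.}
\end{equation*}
Set $\eta := |\xi| + \int_0^T |\hat f_r| d\hat V_r$. Since $\xi \in L^2$ by Hypothesis \ref{HypBSDE} and $\int_0^T |\hat f_r| d\hat V_r \in L^2$ by Proposition \ref{L1} (the argument there bounds $(\int_0^T |\hat f_r| d\hat V_r)^2$ by a quantity in $L^1$), we have $\eta \in L^2$.

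Next, I would consider the cadlag version $Z$ of the martingale $t\mapsto \mathbbm{E}[\eta | \mathcal{F}_t]$, which exists because the stochastic basis satisfies the usual conditions. Since $|Y_t| \leq Z_t$ a.s. for each $t$, and both $Y$ and $Z$ are cadlag, we get $\sup_{t\in[0,T]} |Y_t| \leq \sup_{t\in[0,T]} Z_t$ outside a single null set. Doob's $L^2$ maximal inequality applied to the non-negative cadlag martingale $Z$ yields
\begin{equation*}
\mathbbm{E}\!\left[\sup_{t\in[0,T]} Z_t^2\right] \leq 4\,\mathbbm{E}[\eta^2] < \infty,
\end{equation*}
so $\sup_{t\in[0,T]} |Y_t| \in L^2$.

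Finally, to deduce $Y \in \mathcal{L}^{2,cadlag}(d\hat V \otimes d\mathbbm{P})$, I would use that $\hat V$ is bounded (say by a constant $K$) together with adaptedness and cadlag continuity of $Y$ (which also imply progressive measurability):
\begin{equation*}
\mathbbm{E}\!\left[\int_0^T Y_r^2 \, d\hat V_r\right] \leq K \,\mathbbm{E}\!\left[\sup_{t\in[0,T]} Y_t^2\right] < \infty.
\end{equation*}
No step here looks like a serious obstacle; the only subtlety is to be careful that the pointwise bound $|Y_t|\leq Z_t$, which a priori holds off a $t$-dependent null set, transfers to a uniform bound on the supremum, for which cadlag regularity of both processes is sufficient.
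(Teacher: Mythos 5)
Your proof is correct, but it follows a genuinely different route from the paper's. The paper proves this by applying the integration by parts formula to $Y_r^2e^{-\hat V_r}$, using $dY_r=-\hat f_r d\hat V_r+dM_r$ to arrive at a pathwise bound of the form $Y_t^2e^{-\hat V_t}\leq Z+2\left|\int_0^te^{-\hat V_r}Y_{r^-}dM_r\right|$ with $Z\in L^1$, and then invoking a Burkholder--Davis--Gundy-type lemma (Lemma 3.16 in \cite{paper1preprint}) to conclude that the supremum is in $L^2$. You instead exploit the fact that, in Definition \ref{defYM}, the driver is frozen at the input couple $(\dot U,N)$, so that $Y_t=\mathbbm{E}\left[\xi+\int_t^T\hat f_rd\hat V_r\middle|\mathcal{F}_t\right]$ is dominated by the cadlag martingale $\mathbbm{E}[\eta|\mathcal{F}_t]$ with $\eta=|\xi|+\int_0^T|\hat f_r|d\hat V_r\in L^2$, and Doob's maximal inequality finishes the job. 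Your argument is more elementary (no stochastic calculus, no external BDG-type lemma) and you correctly flag the only real subtlety, namely upgrading the $t$-by-$t$ bound $|Y_t|\leq Z_t$ to a bound on the suprema via right-continuity of both processes. Two cosmetic remarks: Proposition \ref{BSDEexpectations} is stated for solutions of the BSDE rather than for the reference process of Definition \ref{defYM}, so strictly you should say that the identity $Y_t=\mathbbm{E}[\xi+\int_t^T\hat f_rd\hat V_r|\mathcal{F}_t]$ follows from the same one-line computation (take $\mathbbm{E}[\,\cdot\,|\mathcal{F}_t]$ in the defining equation and use that $M$ is a martingale) --- the paper itself uses it in this form in the proof of Proposition \ref{L2}; and the membership $\int_0^T|\hat f_r|d\hat V_r\in L^2$ indeed follows from $\int_0^T\hat f_r^2d\hat V_r\in L^1$ (Proposition \ref{L1}) together with Cauchy--Schwarz and the boundedness of $\hat V$, exactly as you indicate.
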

\begin{proof}
	
 Since 
	$dY_r=-\hat{f}_rd\hat V_r +dM_r$, by integration by parts formula we get
	\begin{equation*}
	d(Y_r^2e^{-\hat V_r})=-2e^{-\hat V_r}Y_r\hat{f}_rd\hat V_r +2e^{-\hat V_r}Y_{r^-}dM_r +e^{-\hat V_r}d[M]_r -e^{-\hat V_r}Y^2_rd\hat V_r.
	\end{equation*}
	So integrating from $0$ to some $t\in[0,T]$, yields
	\begin{equation*}
	\begin{array}{rcl}
	Y_t^2e^{-\hat V_t}&=& Y_0^2 - 2\int_0^t e^{-\hat V_r}Y_r\hat{f}_rd\hat V_r + 2\int_0^te^{-\hat V_r}Y_{r^-}dM_r\\
	&& +  \int_0^te^{-\hat V_r}d[ M]_r -  \int_0^t e^{-\hat V_r}Y^2_rd\hat V_r\\
	&\leq& Y_0^2 + \int_0^t e^{-\hat V_r}Y^2_rd\hat V_r + \int_0^t e^{-\hat V_r}\hat{f}^2_rd\hat V_r \\
	&&+ 2\left|\int_0^te^{-\hat V_r}Y_{r^-}dM_r\right| +  \int_0^te^{-\hat V_r}d[ M]_r -  \int_0^t e^{-\hat V_r}Y^2_rd\hat V_r\\
	&\leq& Z +   2\left|\int_0^te^{-\hat V_r}Y_{r^-}dM_r\right|,
	\end{array}        
	\end{equation*}
	where $Z=Y_0^2 + \int_0^T e^{-\hat V_r}\hat{f}^2_rd\hat V_r  +  \int_0^Te^{-\hat V_r}d[ M]_r$.
	Therefore, for any $t\in[0,T]$  we have 
	$(Y_te^{-\hat V_t})^2\leq Y_t^2e^{-\hat V_t}\leq Z+2\left|\int_0^te^{-\hat V_r}Y_{r^-}dM_r\right|$.
	Thanks to Propositions \ref{L1} and \ref{L2}, $Z$ is integrable, so we can conclude by Lemma 3.18 in \cite{paper1preprint} applied to the process $Ye^{-\hat V}$, and the fact that $\hat V$ is bounded.

	Since $Y$ is c\`adl\`ag progressively measurable, $\underset{t\in[0,T]}{\text{sup }}|Y_t|\in L^2$ and since $\hat V$ is bounded, it is clear that $Y\in \mathcal{L}^{2,cadlag}(d\hat V\otimes d\mathbbm{P})$ and the corresponding class $\dot{Y}$ belongs to  
$L^{2,cadlag}(d\hat V\otimes d\mathbbm{P})$. 
\end{proof}

Thanks to Propositions \ref{L2} and \ref{supY}, we are allowed to 
introduce the following.
\begin{notation}\label{contraction}
We denote by $\Phi$ the operator which associates to a couple 
$(\dot{U},N)$ the couple $(\dot{Y}, M)$.
\begin{equation*}
\Phi: \begin{array}{rcl}
L^2(d\hat V\otimes d\mathbbm{P})\times \mathcal{H}^2_0 &\longrightarrow& L^{2,cadlag}(d\hat V\otimes d\mathbbm{P})\times \mathcal{H}^2_0\\
(\dot{U},N) &\longmapsto& (\dot{Y},M).
\end{array}
\end{equation*}
\end{notation}

\begin{proposition}\label{FixedPoint}
The mapping $(Y,M)\longmapsto(\dot{Y},M)$ induces a bijection between the set of solutions of $BSDE(\xi,\hat{f},\hat V,\hat{M})$ and the set of fixed points of $\Phi$.
\end{proposition}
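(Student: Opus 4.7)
\textbf{Proof plan for Proposition \ref{FixedPoint}.} The strategy is to verify separately that the map $\Psi:(Y,M)\mapsto(\dot Y,M)$ sends solutions to fixed points, that it is injective, and that it is surjective. The only delicate point is keeping careful track of the distinction between cadlag processes and their equivalence classes in $L^2(d\hat V\otimes d\mathbbm{P})$; the Lipschitz condition of Hypothesis \ref{HypBSDE} (item 3) together with Fubini is what ensures that the driver applied to two representatives of the same class yields integrals which are a.s.\ equal at every fixed time.

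First, assume $(Y,M)$ solves $BSDE(\xi,\hat f,\hat V,\hat M)$, so in particular $Y$ is cadlag with $\dot Y\in L^{2,cadlag}(d\hat V\otimes d\mathbbm{P})$ and $M\in\mathcal{H}^2_0$. Applying Proposition \ref{BSDEexpectations} we obtain the explicit conditional-expectation expressions \eqref{E32bis} for $Y_t$ and $M_t$ in terms of $\hat f_r = \hat f(r,\cdot,Y_r,\tfrac{d\langle M,\hat M\rangle}{d\hat V}(r))$. These are exactly the formulas used in Definition \ref{defYM} to construct the output $(\dot Y', M')$ of $\Phi(\dot Y,M)$, so $M$ and $M'$ agree a.s.\ at every $t$; both being cadlag martingales they are indistinguishable, and by the same token $Y$ and $Y'$ are two cadlag modifications of one another, hence $\dot Y=\dot Y'$. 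Thus $\Phi(\dot Y,M)=(\dot Y,M)$.

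For injectivity, suppose $(Y,M)$ and $(Y',M')$ are two solutions with the same image, i.e.\ $M=M'$ and $\dot Y=\dot Y'$. Then $Y=Y'$ holds $d\hat V\otimes d\mathbbm{P}$ a.e., so by the Lipschitz bound on $\hat f$ the integrands $\hat f(r,\cdot,Y_r,\cdot)$ and $\hat f(r,\cdot,Y'_r,\cdot)$ coincide $d\hat V\otimes d\mathbbm{P}$ a.e., and Fubini gives $\int_t^T\hat f(r,\cdot,Y_r,\cdot)d\hat V_r=\int_t^T\hat f(r,\cdot,Y'_r,\cdot)d\hat V_r$ a.s.\ for every fixed $t$. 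Substituting into the BSDE \eqref{BSDEcadlag} for both $Y$ and $Y'$ yields $Y_t=Y'_t$ a.s.\ at every $t$; since both processes are cadlag this upgrades to indistinguishability.

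Finally, for surjectivity, let $(\dot U,N)$ be a fixed point of $\Phi$ and fix any representative $U$ of $\dot U$. Let $(Y,M)$ be the cadlag pair produced by Definition \ref{defYM} with this choice of $(U,N)$; by the fixed-point property $M=N$ and $\dot Y=\dot U$, and by Proposition \ref{supY} one has $\dot Y\in L^{2,cadlag}(d\hat V\otimes d\mathbbm{P})$. By construction $Y$ satisfies $Y_t=\xi+\int_t^T\hat f(r,\cdot,U_r,\tfrac{d\langle N,\hat M\rangle}{d\hat V}(r))d\hat V_r-(M_T-M_t)$; replacing $U$ by $Y$ inside the driver is legitimate since $Y=U$ $d\hat V\otimes d\mathbbm{P}$ a.e.\ and the Lipschitz property together with Fubini again ensures that the resulting integrals agree a.s.\ at each $t$. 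The two sides then coincide as cadlag processes, so $(Y,M)$ solves the BSDE and $\Psi(Y,M)=(\dot U,N)$. The main (mild) obstacle throughout is precisely this bookkeeping between $d\hat V\otimes d\mathbbm{P}$ a.e.\ equalities and indistinguishability, which is resolved each time by combining the Lipschitz bound on $\hat f$ with the cadlag regularity of all processes involved.
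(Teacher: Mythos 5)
Your proof is correct and follows essentially the same route as the paper: both directions rest on Proposition \ref{BSDEexpectations} together with the construction in Definition \ref{defYM}, and the passage between $d\hat V\otimes d\mathbbm{P}$-a.e.\ equality of representatives and indistinguishability of the driver integrals is handled the same way (the paper delegates this to Lemma 3.9 of \cite{paper1preprint}, you rederive it from the Lipschitz bound and Fubini). No gaps.
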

\begin{proof}First, let $(U,N)$ be a solution of $BSDE(\xi,\hat{f},V,\hat{M})$, let $(\dot{Y},M):=\Phi(\dot{U},N)$ and let $Y$ be the reference c\`adl\`ag process associated to $U$ as in Definition \ref{defYM}. By this same definition,  $M$ is the c\`adl\`ag version of 
	\\
	$t\mapsto \mathbbm{E}\left[\xi+\int_0^T\hat{f}\left(r,\cdot,U_r,\frac{d\langle N,\hat{M} \rangle}{d\hat V}(r)\right)d\hat V_r\middle|\mathcal{F}_t\right]-\mathbbm{E}\left[\xi+\int_0^T\hat{f}\left(r,\cdot,U_r,\frac{d\langle N,\hat{M} \rangle}{d\hat V}(r)\right)d\hat V_r\middle|\mathcal{F}_0\right]$, but by Proposition \ref{BSDEexpectations}, so is $N$, meaning $M=N$. Again by Definition \ref{defYM}, 
	$Y =\xi + \int_{\cdot}^T \hat{f}\left(r,\cdot,U_r,\frac{d\langle N,\hat{M} \rangle}{d\hat V}(r)\right)d\hat V_r -(N_T-N_{\cdot})$ which is equal to $U$ thanks to \eqref{BSDEcadlag}, consequently $Y=U$
	in the sense of indistinguishability. In particular, $\dot{U}=\dot{Y}$, implying $(\dot{U},N)=(\dot{Y},M)=\Phi(\dot{U},N)$. Therefore, 
the mapping $(Y,M)\longmapsto(\dot{Y},M)$  does indeed map the set of solutions of $BSDE(\xi,\hat{f},V,\hat{M})$ into the set of fixed points of $\Phi$.
	
	The map $\Phi$ is surjective. Indeed let $(\dot{U},N)$ be a fixed point of $\Phi$, the couple $(Y,M)$ of Definition \ref{defYM} satisfies  
	$Y =\xi + \int_{\cdot}^T \hat{f}\left(r,\cdot,U_r,\frac{d\langle N,\hat{M} \rangle}{d\hat V}(r)\right)d\hat V_r -(M_T-M_{\cdot})$
	in the sense of indistinguishability, and $(\dot{Y},M)=\Phi(\dot{U},N)=(\dot{U},N)$, so by Lemma 3.9 in \cite{paper1preprint},
 $\int_{\cdot}^T \hat{f}\left(r,\cdot,Y_r,\frac{d\langle M,\hat{M} \rangle}{d\hat V}(r)\right)d\hat V_r$ and $\int_{\cdot}^T \hat{f}\left(r,\cdot,U_r,\frac{d\langle N,\hat{M}\rangle}{d\hat V}(r)\right)d\hat V_r$ are indistinguishable  and
	$Y =\xi + \int_{\cdot}^T \hat{f}\left(r,\cdot,Y_r,\frac{d\langle M,\hat{M} \rangle}{d\hat V}(r)\right)d\hat V_r -(M_T-M_{\cdot})$, meaning that
	$(Y,M)$ (which is a preimage of $(\dot{U},N)$) solves $BSDE(\xi,\hat{f},V,\hat{M})$.
	
	We finally show that it is injective. Let us consider two solutions $(Y,M)$ and $(Y',M)$ of $BSDE(\xi,\hat{f},V,\hat{M})$ with  $\dot{Y}=\dot{Y'}$. By Lemma 3.9 in \cite{paper1preprint}
	the processes $\int_{\cdot}^T\hat{f}\left(r,\cdot,Y_r,\frac{d\langle M,\hat{M} \rangle}{d\hat V}(r)\right)d\hat V_r$ and $\int_{\cdot}^T\hat{f}\left(r,\cdot,Y'_r,\frac{d\langle M,\hat{M} \rangle}{d\hat V}(r)\right)d\hat V_r$ are indistinguishable, so taking \eqref{BSDEcadlag} into account, we have $Y=Y'$.
\end{proof}

\begin{proposition}\label{realmart}
	Let $\lambda\in\mathbbm{R}$, let $(\dot U,N)$, $(\dot U',N') 
\in	L^2(d\hat V\otimes d\mathbbm{P})\times \mathcal{H}^2_0$, let $(\dot Y,M)$, $(\dot Y',M')$ be their images through $\Phi$ and let $Y,Y'$ be the c\`adl\`ag representatives of $\dot Y$, $\dot Y'$ introduced in Definition \ref{defYM}. Then $\int_0^{\cdot}e^{\lambda \hat V_r}Y_{r^-}dM_r$, $\int_0^{\cdot}e^{\lambda \hat V_r}Y'_{r^-}dM'_r$, $\int_0^{\cdot}e^{\lambda \hat V_r}Y_{r^-}dM'_r$ and $\int_0^{\cdot}e^{\lambda \hat V_r}Y'_{r^-}dM_r$ are martingales.
\end{proposition}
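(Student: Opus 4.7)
\begin{prooff}{}
The plan is to reduce each of the four stochastic integrals to the setting of Lemma 3.15 in \cite{paper1preprint}, which guarantees that $\int_0^{\cdot} \phi_{r^-} dM_r$ is a true martingale (not merely a local one) whenever $M\in\mathcal{H}^2$ and $\underset{t\in[0,T]}{\text{sup }}|\phi_t|^2\in L^1$.

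First I would note that $\hat V$ is bounded by assumption, so there exists a constant $C_\lambda>0$ such that $e^{\lambda \hat V_t}\leq C_\lambda$ uniformly in $t\in[0,T]$ and $\omega$. Consequently, for the processes $Y$ and $Y'$ constructed in Definition \ref{defYM}, one has
\begin{equation*}
\underset{t\in[0,T]}{\text{sup }}|e^{\lambda \hat V_t}Y_t|^2\leq C_\lambda^2\underset{t\in[0,T]}{\text{sup }}|Y_t|^2,
\end{equation*}
and an analogous bound for $Y'$. By Proposition \ref{supY}, $\underset{t\in[0,T]}{\text{sup }}|Y_t|$ and $\underset{t\in[0,T]}{\text{sup }}|Y'_t|$ both belong to $L^2$, so the squared suprema of $e^{\lambda \hat V}Y$ and $e^{\lambda \hat V}Y'$ are in $L^1$. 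The left-limit processes $e^{\lambda \hat V_{\cdot}}Y_{\cdot^-}$ and $e^{\lambda \hat V_{\cdot}}Y'_{\cdot^-}$ are predictable (as left-limits of adapted cadlag processes), and their suprema coincide with those of $e^{\lambda\hat V}Y$, $e^{\lambda\hat V}Y'$ up to a single jump bound.

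Since $M,M'\in\mathcal{H}^2_0$, applying Lemma 3.15 of \cite{paper1preprint} to each of the four choices of integrator and integrand yields that
\begin{equation*}
\int_0^{\cdot}e^{\lambda \hat V_r}Y_{r^-}dM_r,\quad \int_0^{\cdot}e^{\lambda \hat V_r}Y'_{r^-}dM'_r,\quad \int_0^{\cdot}e^{\lambda \hat V_r}Y_{r^-}dM'_r,\quad \int_0^{\cdot}e^{\lambda \hat V_r}Y'_{r^-}dM_r
\end{equation*}
are all true martingales. There is no real obstacle here beyond carefully invoking Proposition \ref{supY} to supply the $L^2$ control on the suprema and using the boundedness of $\hat V$ to absorb the exponential factor; the square integrability of $M,M'$ then does the rest via the cited lemma.
\end{prooff}
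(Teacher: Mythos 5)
Your proof is correct and follows essentially the same route as the paper: both invoke Proposition \ref{supY} for the $L^2$ bound on $\sup_t|Y_t|$ and $\sup_t|Y'_t|$, use the boundedness of $\hat V$ to absorb the factor $e^{\lambda\hat V}$, and conclude via Lemma 3.15 of \cite{paper1preprint} together with the square integrability of $M$ and $M'$.
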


\begin{proof}
	$\hat V$ is bounded and  thanks to Proposition \ref{supY} we know that $\underset{t\in[0,T]}{\text{sup }} |Y_t|$ and $\underset{t\in[0,T]}{\text{sup }} |Y'_t|$ are $L^2$. Moreover, since $M$ and $M'$ are square integrable, the statement
	yields therefore as a consequence of Lemma 3.17 in \cite{paper1preprint}.
\end{proof}

Starting from now, if $(\dot{Y},M)$ is the image by $\Phi$ of some 
\\
$(\dot{U},N)\in L^2(d\hat V\otimes d\mathbbm{P})\times \mathcal{H}^2_0$,
 by default, we will always refer to  the c\`adl\`ag reference process 
$Y$ of $\dot{Y}$ defined in Definition \ref{defYM}.

For any $\lambda\geq 0$, 
 on $L^2(d\hat V\otimes d\mathbbm{P})\times\mathcal{H}^2_0$
we define the  norm
\\
$\|(\dot Y,M)\|_{\lambda}^2 :=\mathbbm{E}\left[\int_0^T e^{\lambda \hat V_r}Y_r^2d\hat V_r\right] + \mathbbm{E}\left[\int_0^T e^{\lambda \hat V_r}d\langle M\rangle_r\right]$.
Since $\hat V$ is bounded, these norms are all equivalent.

\begin{proposition}\label{ProofContraction}
There exists  $\lambda>0$ such that for any 
\\
$(\dot U,N)\in L^2(d\hat V\otimes d\mathbbm{P})\times\mathcal{H}^2_0$, $\left\|\Phi(\dot U,N)\right\|^2_{\lambda}\leq \frac{1}{2}\left\|(\dot U,N)\right\|^2_{\lambda}$. In particular, $\Phi$ is a contraction in 
$L^2(d\hat V\otimes d\mathbbm{P})\times\mathcal{H}^2_0$ for the norm $\|\cdot\|_{\lambda}$. 
\end{proposition}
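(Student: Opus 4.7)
The plan is to run the standard BSDE contraction argument based on an Itô/integration-by-parts computation on $e^{\lambda \hat V}(\delta Y)^2$, where $\delta Y$ denotes the difference of two reference processes. Strictly speaking, since the map $\Phi$ is affine (because of the non-homogeneous data $\xi$ and $\hat f(\cdot,\cdot,0,0)$), the estimate $\|\Phi(\dot U,N)\|^2_\lambda\leq \tfrac12\|(\dot U,N)\|^2_\lambda$ is to be understood (and will be proved) for the differences: given $(\dot U_1,N_1)$ and $(\dot U_2,N_2)$ with images $(\dot Y_i,M_i)=\Phi(\dot U_i,N_i)$, I will show $\|(\dot Y_1-\dot Y_2,M_1-M_2)\|^2_\lambda \leq \tfrac12\|(\dot U_1-\dot U_2,N_1-N_2)\|^2_\lambda$. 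Writing $\delta U$, $\delta N$, $\delta Y$, $\delta M$ for the corresponding differences, and $\delta\hat f_r:=\hat f(r,\cdot,U_{1,r},\tfrac{d\langle N_1,\hat M\rangle}{d\hat V}(r))-\hat f(r,\cdot,U_{2,r},\tfrac{d\langle N_2,\hat M\rangle}{d\hat V}(r))$, subtracting the two relations in Definition \ref{defYM} gives $\delta Y=\int_{\cdot}^T\delta\hat f_r\,d\hat V_r -(\delta M_T-\delta M_{\cdot})$ with $\delta Y_T=0$.

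Applying integration by parts to $e^{\lambda \hat V_t}(\delta Y_t)^2$ as in Proposition \ref{supY} and integrating from $0$ to $T$ produces
\begin{equation*}
(\delta Y_0)^2 + \lambda\!\int_0^T e^{\lambda\hat V_r}(\delta Y_r)^2 d\hat V_r + \int_0^T e^{\lambda\hat V_r}d[\delta M]_r = 2\!\int_0^T e^{\lambda\hat V_r}\delta Y_r\,\delta\hat f_r\, d\hat V_r + \text{(martingale terms)}.
\end{equation*}
Taking expectations, the two martingale contributions (stochastic integrals of $e^{\lambda\hat V}\delta Y_{r^-}$ against $\delta M$, and $[\delta M]-\langle\delta M\rangle$) vanish by Proposition \ref{realmart} and the definition of the predictable bracket. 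Using $K^Y,K^Z$ from Hypothesis \ref{HypBSDE} we bound
$|\delta\hat f_r|\leq K^Y|\delta U_r|+K^Z\bigl\|\tfrac{d\langle\delta N,\hat M\rangle}{d\hat V}(r)\bigr\|$, and Young's inequality $2ab\leq\alpha a^2+\tfrac1\alpha b^2$ separates the cross term into a piece controlled by $\int_0^T e^{\lambda\hat V_r}(\delta Y_r)^2 d\hat V_r$ and a piece controlled by $\int_0^T e^{\lambda\hat V_r}\bigl(|\delta U_r|^2+\bigl\|\tfrac{d\langle\delta N,\hat M\rangle}{d\hat V}(r)\bigr\|^2\bigr)d\hat V_r$.

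The cross-bracket term is then replaced by the bracket of $\delta N$ via Lemma \ref{ZinL2} applied with $\phi_r=e^{\lambda\hat V_r}$: there exists $C>0$ independent of $N$ such that $\int_0^T e^{\lambda\hat V_r}\bigl\|\tfrac{d\langle\delta N,\hat M\rangle}{d\hat V}(r)\bigr\|^2 d\hat V_r\leq C\int_0^T e^{\lambda\hat V_r}d\langle\delta N\rangle_r$. After these estimates the inequality takes the schematic form
\begin{equation*}
(\lambda-\alpha)\,\mathbbm E\!\!\int_0^T\! e^{\lambda\hat V_r}(\delta Y_r)^2d\hat V_r + \mathbbm E\!\!\int_0^T\! e^{\lambda\hat V_r}d\langle\delta M\rangle_r \leq \tfrac{(K^Y)^2}{\alpha}\mathbbm E\!\!\int_0^T\!e^{\lambda\hat V_r}(\delta U_r)^2 d\hat V_r + \tfrac{C(K^Z)^2}{\alpha}\mathbbm E\!\!\int_0^T\!e^{\lambda\hat V_r}d\langle\delta N\rangle_r.
\end{equation*}

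Finally one picks $\alpha$ and $\lambda$ large enough so that both $(\lambda-\alpha)\geq 2(K^Y)^2/\alpha$ and $1\geq 2C(K^Z)^2/\alpha$, which yields $\|\Phi(\dot U_1,N_1)-\Phi(\dot U_2,N_2)\|^2_\lambda\leq\tfrac12\|(\dot U_1,N_1)-(\dot U_2,N_2)\|^2_\lambda$, i.e.\ the contraction property, and in particular the displayed inequality on differences from $\Phi(0,0)$. The main obstacle is keeping track of the three Lipschitz-type constants ($K^Y$, $K^Z$ and the $C$ from Lemma \ref{ZinL2}) simultaneously while still having room on the left-hand side to close the estimate with the factor $\tfrac12$; this is handled by the two-parameter choice $(\alpha,\lambda)$ above. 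Contractivity on the Banach space $L^2(d\hat V\otimes d\mathbbm P)\times\mathcal H_0^2$ equipped with the equivalent norm $\|\cdot\|_\lambda$ then provides a unique fixed point of $\Phi$, which through Proposition \ref{FixedPoint} gives the unique solution of $BSDE(\xi,\hat f,\hat V,\hat M)$ announced in Theorem \ref{uniquenessBSDE}.
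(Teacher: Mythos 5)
Your proposal is correct and follows essentially the same route as the paper's own proof: integration by parts on $e^{\lambda\hat V}(\bar Y)^2$ for the difference of two images, vanishing of the martingale terms via Proposition \ref{realmart}, the Lipschitz bound combined with Young's inequality and Lemma \ref{ZinL2}, and then a choice of the Young parameter(s) and of $\lambda$ to close the estimate with the factor $\tfrac12$ (the paper uses two Young parameters $\alpha,\beta$ where you use one, which is immaterial). Your observation that the displayed inequality must be read as an estimate on differences, since $\Phi$ is only affine, is accurate and is exactly what the paper's proof actually establishes.
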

\begin{proof}
	
	Let $(\dot U,N)$ and $(\dot U',N')$ be two couples 
belonging to $L^2(d\hat V\otimes d\mathbbm{P})\times\mathcal{H}^2_0$, let $(\dot Y,M)$ and $(\dot Y',M')$ be their images via $\Phi$ and let $Y,Y'$ be the c\`adl\`ag reference process of $\dot Y$, $\dot Y'$ introduced in Definition \ref{defYM}. We will write $\bar{Y}$ for $Y-Y'$ and we adopt a similar notation  for  other processes. We will also write  
	\\
	$ \bar{f}_t := \hat{f}\left(t,\cdot,U_t,\frac{d\langle N,\hat{M} \rangle}{d\hat V}(t)\right) -\hat{f}\left(t,\cdot,U'_t,\frac{d\langle N',\hat{M} \rangle}{d\hat V}(t)\right).$
	
	By additivity,
	we have $d\bar Y_t=-\bar{f}_td\hat V_t +d\bar M_t$.
	Since $\bar{Y}_T = \xi - \xi = 0$, applying the integration by parts formula to $\bar{Y}_t^2e^{\lambda \hat V_t}$ between $0$ and $T$ we get
	\begin{equation*}
	\bar{Y}_0^2 - 2\int_0^T e^{\lambda \hat V_r}\bar{Y}_r\bar{f}_rd\hat V_r + 2\int_0^T e^{\lambda \hat V_r}\bar{Y}_{r^-}d\bar{M}_r +\int_0^T e^{\lambda \hat V_r}d[\bar{M}]_r + \lambda\int_0^T e^{\lambda \hat V_r}\bar{Y}^2_rd\hat V_r=0.
	\end{equation*}
	
	Since, by Proposition \ref{realmart}, the stochastic integral with respect
	to $\bar M$ is a real martingale,
	by taking the expectations we get
	
	\begin{equation*}
	\mathbbm{E}\left[\bar{Y}_0^2\right] - 2 \mathbbm{E}\left[\int_0^T e^{\lambda \hat V_r}\bar{Y}_r\bar{f}_rd\hat V_r\right] + \mathbbm{E}\left[\int_0^T e^{\lambda \hat V_r}d\langle \bar{M}\rangle_r\right] + \lambda\mathbbm{E}\left[\int_0^T e^{\lambda \hat V_r}\bar{Y}^2_rd\hat V_r\right]=0.
	\end{equation*}
	So by re-arranging previous expression, by the Lipschitz condition on $\hat{f}$ stated in Hypothesis
	\ref{HypBSDE}, by the linearity of the Radon-Nikodym derivative and by Lemma \ref{ZinL2},  we get 
	
	\begin{equation*}
	\begin{array}{lll}
	& &\lambda \mathbbm{E}\left[\int_0^T e^{\lambda \hat V_r}\bar{Y}^2_rd\hat V_r\right] + \mathbbm{E}\left[\int_0^T e^{\lambda \hat V_r}d\langle \bar{M}\rangle_r\right]\\
	&\leq&2 \mathbbm{E}\left[\int_0^T e^{\lambda \hat V_r}|\bar{Y}_r||\bar{f}_r|d\hat V_r\right]\\
	& \leq & 2K^Y\mathbbm{E}\left[\int_0^T e^{\lambda \hat V_r}|\bar{Y}_r||\bar{U}_r|d\hat V_r\right]+2K^Z\underset{i=1}{\overset{d}{\sum}}\mathbbm{E}\left[\int_0^T e^{\lambda \hat V_r}|\bar{Y}_r|\left|\frac{d\langle \bar{N},\hat{M}^i\rangle }{d\hat V}(r)\right|d\hat V_r\right] \\
	&\leq& (K^Y\alpha +d K^Z\beta)\mathbbm{E}\left[\int_0^T e^{\lambda \hat V_r}\bar{Y}_r^2d\hat V_r\right] + \frac{K^Y}{\alpha}\mathbbm{E}\left[\int_0^T e^{\lambda \hat V_r}\bar{U}_r^2d\hat V_r\right] \\
	&&+ \frac{K^Z}{\beta}\underset{i=1}{\overset{d}{\sum}}\mathbbm{E}\left[\int_0^T e^{\lambda \hat V_r}\left(\frac{d\langle \bar{N},\hat{M}^i\rangle }{d\hat V}(r)\right)^2d\hat V_r\right]\\
	&\leq&(K^Y\alpha +d K^Z\beta)\mathbbm{E}\left[\int_0^T e^{\lambda \hat V_r}\bar{Y}_r^2d\hat V_r\right] + \frac{K^Y}{\alpha}\mathbbm{E}\left[\int_0^T e^{\lambda \hat V_r}\bar{U}_r^2d\hat V_r\right] \\
	&&+ \frac{CdK^Z}{\beta}\mathbbm{E}\left[\int_0^T e^{\lambda \hat V_r}d\langle \bar{N}\rangle_r\right],\\
	\end{array}
	\end{equation*}
	for some positive $C$ and any positive $\alpha$ and $\beta$.
The latter equality holds by Hypothesis \ref{HypBSDE} 4.
 Then we pick $\alpha = 2K^Y$ and $\beta = 2CdK^Z$, which gives us
	
	\begin{equation*} 
	\begin{array}{rcl}
	&&\lambda \mathbbm{E}\left[\int_0^T e^{\lambda \hat V_r}\bar{Y}^2_rd\hat V_r\right] + \mathbbm{E}\left[\int_0^T e^{\lambda \hat V_r}d\langle \bar{M}\rangle_r\right] \\
	&\leq& 2((K^Y)^2 + C(dK^Z)^2)\mathbbm{E}\left[\int_0^T e^{\lambda \hat V_r}\bar{Y}_r^2d\hat V_r\right] \\
	&+& \frac{1}{2}\mathbbm{E}\left[\int_0^T e^{\lambda \hat V_r}\bar{U}_r^2d\hat V_r\right] 
	+ \frac{1}{2}\mathbbm{E}\left[\int_0^T e^{\lambda \hat V_r}d\langle \bar{N}\rangle_r\right].
	\end{array}
	\end{equation*}
	We choose now $\lambda = 1 + 2((K^Y)^2 + C(dK^Z)^2)$ and we get
	\begin{equation} \label{E152}
	\begin{array}{rcl}
	&&\mathbbm{E}\left[\int_0^T e^{\lambda \hat V_r}\bar{Y}^2_rd\hat V_r\right] + \mathbbm{E}\left[\int_0^T e^{\lambda \hat V_r}d\langle\bar{M}\rangle_r\right]\\
	&\leq& \frac{1}{2}\mathbbm{E}\left[\int_0^T e^{\lambda \hat V_r}\bar{U}_r^2d\hat V_r\right]
	+ \frac{1}{2}\mathbbm{E}\left[\int_0^T e^{\lambda \hat V_r}d\langle \bar{N}\rangle_r\right],
	\end{array}
	\end{equation}
	which proves the contraction for the norm  $\|\cdot\|_{\lambda}$.
\end{proof}

\begin{prooff}\\ of Theorem \ref{uniquenessBSDE}.
	
	The space $L^2(d\hat V\otimes d\mathbbm{P})\times \mathcal{H}^2_0$ is complete and $\Phi$ defines on it a contraction for the norm $\|(\cdot,\cdot)\|_{\lambda}$ for some $\lambda>0$, so $\Phi$ has a unique fixed point in 
	\\
	$L^2(d\hat V\otimes d\mathbbm{P})\times \mathcal{H}^2_0$. Then by Proposition \ref{FixedPoint}, $BSDE(\xi,\hat{f},V,\hat{M})$ has a unique solution. 
\end{prooff}

\begin{remark}\label{RealMart}
	Let $(Y,M)$ be the solution of $BSDE(\xi,\hat{f},V,\hat{M})$ and $\dot{Y}$ the class of $Y$ in $L^2(d\hat V\otimes d\mathbbm{P})$. Thanks to Proposition \ref{FixedPoint}, we know  that 
	\\
	$(\dot{Y},M)=\Phi(\dot{Y},M)$ and therefore by Propositions \ref{supY} and \ref{realmart} that $\underset{t\in[0,T]}{\text{ sup }}|Y_t|$ is $L^2$ and that $\int_0^{\cdot}Y_{r^-}dM_r$ is a real martingale.
\end{remark}

The  lemma below  shows that, in order to check if a couple
$(Y,M)$ is the solution of $BSDE(\xi,\hat{f},V,\hat{M})$, it is not necessary 
to verify the square integrability of $Y$ since
it will be automatically fulfilled.
\begin{lemma}\label{LED+Pext}
 We consider  $(\xi,\hat{f},V,\hat{M})$ such that $\xi,\hat{M}$ satisfy items 1., 2. 
 of Hypothesis \ref{HypBSDE} but where item 3. is replaced by the
weaker following hypothesis on $\hat f$. There exists $C>0$ such that $\mathbbm{P}$ a.s., for all $t,y,z$, 
\begin{equation}
|\hat{f}(t,\omega,y,z)|\leq C(1+|y|+\|z\|).
\end{equation}
Assume that there exists a c\`adl\`ag adapted process $Y$ with $Y_0\in L^2$ , and $M\in\mathcal{H}^2_0$ such that 
	\begin{equation}\label{EqLEDPext}
	Y = \xi + \int_{\cdot}^T
	\hat{f}\left(r,\cdot,Y_r,\frac{d\langle M,\hat{M}\rangle}{d\hat V}(r)\right) d\hat V_r - (M_T-M_{\cdot}),
	\end{equation}
	in the sense of indistinguishability. Then $\underset{t\in[0,T]}{\text{sup }}|Y_t|$ is $L^2$.
        In particular, 
	$Y\in \mathcal{L}^2(d\hat V\otimes d\mathbbm{P})$ and if $(\xi,\hat{f},V,\hat{M})$ satisfies Hypothesis \ref{HypBSDE}, 
then $(Y,M)$ is the unique solution of $BSDE(\xi,\hat{f},V,\hat{M})$ in the sense of Definition \ref{firstdefBSDE}.
	
	On the other hand if $(Y,M)$ satisfies \eqref{EqLEDPext} 
	on $[s,T]$ with $s<T$,  $Y_s\in L^2$ and  $M_s=0$ then $\underset{t\in[s,T]}{\text{sup }}|Y_t|$ is $L^2$. 
Consequently if $(\xi,\hat{f},V,\hat{M})$ satisfies Hypothesis \ref{HypBSDE} and if
 we denote $(U,N)$ the unique solution of $BSDE(\xi,\hat{f},V,\hat{M})$, then $(Y,M)$ and $(U,N_{\cdot}-N_s)$ are indistinguishable on $[s,T]$.
\end{lemma}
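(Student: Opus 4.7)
The plan is to adapt the integration-by-parts argument of Proposition \ref{supY}. The new obstacle is twofold: the driver $\hat{f}_r := \hat{f}(r,\cdot,Y_r,\frac{d\langle M,\hat{M}\rangle}{d\hat V}(r))$ is now evaluated at the unknown process $Y$ itself (rather than at a data couple $(\dot U, N)$ that is already in $L^2\times\mathcal{H}^2_0$), and only linear growth, not Lipschitz continuity, is available. Both difficulties will be absorbed simultaneously by weighting $Y^2$ by $e^{-\lambda \hat V}$ with $\lambda$ sufficiently large.

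Applying It\^o's formula (using that $\hat V$ is continuous of bounded variation so $[Y]=[M]$) yields
\begin{equation*}
Y_t^2 e^{-\lambda \hat V_t} = Y_0^2 - 2\!\!\int_0^t\! e^{-\lambda \hat V_r} Y_r \hat f_r d\hat V_r + 2\!\!\int_0^t\! e^{-\lambda \hat V_r} Y_{r^-} dM_r + \int_0^t\! e^{-\lambda \hat V_r} d[M]_r - \lambda\!\!\int_0^t\! e^{-\lambda \hat V_r} Y_r^2 d\hat V_r.
\end{equation*}
Using the linear growth bound and Young's inequality, one gets constants $C_0,K>0$ such that with $z_r := \frac{d\langle M,\hat M\rangle}{d\hat V}(r)$,
\begin{equation*}
2|Y_r||\hat f_r| \;\leq\; 2C|Y_r|(1+|Y_r|+\|z_r\|) \;\leq\; C_0(1+\|z_r\|^2) + K Y_r^2.
\end{equation*}
Choosing $\lambda \geq K$ makes the $Y_r^2\,d\hat V_r$ contribution nonpositive and can be dropped, leaving the key inequality
\begin{equation*}
Y_t^2 e^{-\lambda \hat V_t} \;\leq\; Z + 2\left|\int_0^t e^{-\lambda \hat V_r} Y_{r^-} dM_r\right|,\qquad t\in[0,T],
\end{equation*}
with $Z := Y_0^2 + C_0\int_0^T e^{-\lambda \hat V_r}(1+\|z_r\|^2) d\hat V_r + \int_0^T e^{-\lambda \hat V_r} d[M]_r$.

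I then check $Z\in L^1$: the first summand comes from $Y_0\in L^2$; the second from $\hat V$ being bounded together with Lemma \ref{ZinL2} applied componentwise to $\hat M$, giving $\int_0^T\|z_r\|^2 d\hat V_r\in L^1$; the third from $M\in\mathcal{H}^2_0$. At this point the estimate has exactly the structure exploited in Proposition \ref{supY}, so invoking Lemma 3.16 of \cite{paper1preprint} applied to $Y e^{-\lambda \hat V}$ (using $e^{-\lambda \hat V_t}\geq e^{-2\lambda \hat V_t}$ to pass from $Y_t^2 e^{-\lambda \hat V_t}$ to $(Y_t e^{-\lambda \hat V_t})^2$) yields $\sup_{t\in[0,T]}|Y_t|^2\in L^1$. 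Since $\hat V$ is bounded, this automatically gives $Y\in\mathcal{L}^2(d\hat V\otimes d\mathbb{P})$. Under the full Hypothesis \ref{HypBSDE}, $(Y,M)$ is then a solution of $BSDE(\xi,\hat f,V,\hat M)$ in the sense of Definition \ref{firstdefBSDE}, and uniqueness is provided by Theorem \ref{uniquenessBSDE}.

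For the statement on $[s,T]$, I repeat the same integration by parts with lower limit $s$, using that $Y_s\in L^2$ replaces $Y_0\in L^2$ and that $M_s=0$, to reach an inequality of the same shape and conclude $\sup_{t\in[s,T]}|Y_t|\in L^2$. Under the full Hypothesis \ref{HypBSDE} the couple $(Y,M)$ then satisfies the BSDE on $[s,T]$ in the sense of Remark \ref{BSDESmallInt}, and the uniqueness noted in that remark identifies $(Y,M)$ with $(U,N_\cdot - N_s)$ up to indistinguishability on $[s,T]$. The only genuinely delicate point is the calibration of $\lambda$ in terms of the linear-growth constant $C$ so that the quadratic-in-$Y$ contribution from $2|Y_r||\hat f_r|$ is dominated by $-\lambda Y_r^2 d\hat V_r$; everything else follows the blueprint of Proposition \ref{supY}.
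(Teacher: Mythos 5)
Your proposal is correct and follows essentially the same route as the paper's own proof: integration by parts on $Y^2e^{-\lambda \hat V}$, Young's inequality with $\lambda$ calibrated to absorb the quadratic-in-$Y$ term, Lemma \ref{ZinL2} to control the $\frac{d\langle M,\hat M\rangle}{d\hat V}$ contribution by $\langle M\rangle$, and Lemma 3.16 of \cite{paper1preprint} to conclude, with the same repetition on $[s,T]$ and identification via the uniqueness of Remark \ref{BSDESmallInt}. The only cosmetic difference is that you work directly from the linear-growth bound while the paper's write-up splits off $\hat f(\cdot,\cdot,0,0)$ using the Lipschitz constants; the resulting estimate is the same.
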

\begin{proof}

	Let $\lambda>0$ and $t\in[0,T]$. By integration by parts formula applied to $Y^2e^{-\lambda \hat V}$ between $0$ and $t$ we get
	\begin{equation*}
	\begin{array}{rcl}
	Y^2_te^{-\lambda \hat V_t}-Y_0^2 &=& -2\int_0^te^{-\lambda \hat V_r}Y_r\hat{f}\left(r,\cdot,Y_r,\frac{d\langle M,\hat{M}\rangle}{d\hat V}(r)\right)d\hat V_r +2\int_0^te^{-\lambda \hat V_r}Y_{r^-}dM_r \\
	& & +\int_0^te^{-\lambda \hat V_r}d[M]_r-\lambda\int_0^te^{-\lambda \hat V_r}Y_r^2
d \hat V_r.

	\end{array}
	\end{equation*}
		By re-arranging the terms  and using the Lipschitz conditions
stated in item 3. of in Hypothesis \ref{HypBSDE}, we get
	\begin{equation*}
	\begin{array}{rcl}
	&& Y^2_te^{-\lambda \hat V_t}+\lambda\int_0^te^{-\lambda \hat V_r}Y_r^2d\hat V_r\\
	&\leq& Y_0^2 + 2\int_0^te^{-\lambda \hat V_r}|Y_r||\hat{f}|\left(r,\cdot,Y_r,\frac{d\langle M,\hat{M}\rangle}{d\hat V}(r)\right)d\hat V_r+2\left|\int_0^te^{-\lambda \hat V_r}Y_{r^-}dM_r\right|\\
	&& +\int_0^te^{-\lambda \hat V_r}d[M]_r\\
	& \leq& Y_0^2 + \int_0^te^{-\lambda \hat V_r}\hat{f}^2(r,\cdot,0,0)d\hat V_r+(2K^Y+1+K^Z)\int_0^te^{-\lambda \hat V_r}Y_r^2d\hat V_r\\
	& &+K^Z\underset{i=1}{\overset{d}{\sum}}\int_0^te^{-\lambda \hat V_r}\left(\frac{d\langle M,\hat{M}^i\rangle}{d\hat V}(r)\right)^2d\hat V_r+2\left|\int_0^te^{-\lambda \hat V_r}Y_{r^-}dM_r\right| +\int_0^te^{-\lambda \hat V_r}d[M]_r. 
	\end{array}
	\end{equation*}
	Picking $\lambda = 2K^Y+1+K^Z$ and using Lemma \ref{ZinL2}, this gives 
	\begin{equation*}
	\begin{array}{rcl}
	Y^2_te^{-\lambda \hat V_t} &\leq& Y_0^2 + \int_0^te^{-\lambda \hat V_r}|\hat{f}|^2(r,\cdot,0,0)d\hat V_r+K^ZC\int_0^te^{-\lambda \hat V_r}d\langle M\rangle_r\\
	&&+2\left|\int_0^te^{-\lambda \hat V_r}Y_{r^-}dM_r\right| +\int_0^te^{-\lambda \hat V_r}d[M]_r,
	\end{array}
	\end{equation*}
	for some $C>0$. Since $\hat V$ is bounded, there is a constant $C'>0$, such that for any $t\in[0,T]$
	\begin{equation*}
	Y^2_t \leq C'\left(Y_0^2 + \int_0^T|\hat{f}|^2(r,\cdot,0,0)d\hat V_r+\langle M\rangle_T +[M]_T+\left|\int_0^tY_{r^-}dM_r\right|\right).
	\end{equation*}
	By Hypothesis \ref{HypBSDE}, $Y_0\in L^2$ and $M\in\mathcal{H}^2$, the first four terms on the right-hand side are integrable so that we can conclude by Lemma 3.18 in \cite{paper1preprint}.
	
	An analogous proof also holds on the interval $[s,T]$ 
	taking into account  Remark \ref{BSDESmallInt}.
In particular, if  $(U,N)$ is the unique solution  of  
  $BSDE(\xi,\hat{f},V,\hat{M})$ then
$(U,N - N_s)$ is a solution on $[s,T]$.
  The final statement result follows by the uniqueness argument of Remark \ref{BSDESmallInt}.
\end{proof}

\begin{notation}\label{DefPicard}
	Let $\Phi: L^2(d\hat V\otimes d\mathbbm{P})\times\mathcal{H}^2_0$ be the operator
	introduced in  Notation \ref{contraction}.
	 
	In the sequel we will not distinguish between a couple
	$(\dot Y,M)$ in $L^2(d\hat V\otimes d\mathbbm{P})\times\mathcal{H}^2_0$ 
	and $(Y,M)$, where $Y$ is the reference c\`adl\`ag process of $\dot Y$,
	according to Definition \ref{defYM}. We then convene the following.
	\begin{enumerate}
		\item $(Y^{0},M^{0}):=(0,0)$;
		\item $\forall k\in\mathbbm{N}^*:(Y^{k},M^{k}):=\Phi(Y^{k-1},M^{k-1})$,
	\end{enumerate}
	meaning that for $k\in\mathbbm{N}^*$, $(Y^{k},M^{k})$ is the solution of the BSDE
	\begin{equation}\label{defYk}
	Y^{k} = \xi + \int_{\cdot}^T \hat{f}\left(r,\cdot,Y^{k-1},\frac{d\langle M^{k-1},\hat{M}\rangle}{d\hat V}(r)\right)d\hat V_r  -(M^{k}_T - M^{k}_{\cdot}).
	\end{equation}
\end{notation}

\begin{definition}
	The processes $(Y^k,M^k)_{k\in\mathbbm{N}}$ will be called the Picard
 iterations associated to $BSDE(\xi,\hat{f},\hat V,\hat{M})$.
\end{definition}

We know that $\Phi$ is a contraction in $L^2(d\hat V\otimes d\mathbbm{P}^{s,x})\times\mathcal{H}^2_0$ for a certain norm, so that $(Y^k,M^k)$ tends to $(Y,M)$ in this topology.
The proposition below also shows an a.e. corresponding convergence,
adapting the techniques of
Corollary 2.1 in \cite{el1997backward}.
\begin{proposition}\label{cvdt}
	$Y^{k}\underset{k\rightarrow \infty}{\longrightarrow} Y\quad d\hat V\otimes d\mathbbm{P}$ a.e.  and for any $i\in[\![ 1;d]\!]$, 
	\\
	$\frac{d\langle M^{k},\hat{M}^i\rangle}{d\hat V}\underset{k\rightarrow \infty}{\longrightarrow}\frac{d\langle M,\hat{M}^i\rangle}{d\hat V}\quad d\hat V\otimes d\mathbbm{P}$ a.e.
	
\end{proposition}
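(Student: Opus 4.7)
My plan is to exploit the geometric rate of contraction to produce absolute summability, and then apply a Minkowski-inequality / Borel--Cantelli style argument to upgrade $L^2(d\hat V \otimes d\mathbbm{P})$ convergence to pointwise convergence $d\hat V \otimes d\mathbbm{P}$ a.e.

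First, since $\Phi$ is a $\tfrac{1}{\sqrt{2}}$-contraction in $\|\cdot\|_\lambda$ (Proposition \ref{ProofContraction}) and $(Y^k,M^k)=\Phi^k(0,0)$, an easy induction gives
\[
\|(Y^{k+1}-Y^k,M^{k+1}-M^k)\|_\lambda \;\le\; 2^{-k/2}\,\|(Y^1,M^1)\|_\lambda.
\]
Because $\hat V$ is bounded, the norm $\|\cdot\|_\lambda$ is equivalent to the usual norm on $L^2(d\hat V\otimes d\mathbbm{P})\times \mathcal{H}^2_0$, so in particular
\[
\sum_{k\ge 0}\Bigl(\mathbbm{E}\bigl[\textstyle\int_0^T(Y^{k+1}_r-Y^k_r)^2 d\hat V_r\bigr]\Bigr)^{1/2}<\infty
\quad\text{and}\quad
\sum_{k\ge 0}\Bigl(\mathbbm{E}[\langle M^{k+1}-M^k\rangle_T]\Bigr)^{1/2}<\infty.
\]

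Next, by Minkowski's inequality applied in $L^2(d\hat V\otimes d\mathbbm{P})$,
\[
\Bigl\|\sum_{k\ge 0}|Y^{k+1}-Y^k|\Bigr\|_{L^2(d\hat V\otimes d\mathbbm{P})} \;\le\; \sum_{k\ge 0}\|Y^{k+1}-Y^k\|_{L^2(d\hat V\otimes d\mathbbm{P})} \;<\;\infty.
\]
Thus $\sum_k|Y^{k+1}_r(\omega)-Y^k_r(\omega)|<\infty$ for $d\hat V\otimes d\mathbbm{P}$ a.e.\ $(r,\omega)$; the telescoping sum $Y^k=\sum_{j=0}^{k-1}(Y^{j+1}-Y^j)$ then converges $d\hat V\otimes d\mathbbm{P}$ a.e.\ to some limit. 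Since $Y^k\to Y$ in $L^2(d\hat V\otimes d\mathbbm{P})$ (contraction argument), a subsequence of $(Y^k)$ already converges to $Y$ a.e., so the a.e.\ limit must coincide with $Y$.

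For the Radon--Nikodym derivatives, fix $i\in[\![ 1;d]\!]$. By linearity of the Radon--Nikodym derivative (Proposition \ref{linearity}),
\[
\tfrac{d\langle M^{k+1},\hat M^i\rangle}{d\hat V}-\tfrac{d\langle M^{k},\hat M^i\rangle}{d\hat V}
\;=\;\tfrac{d\langle M^{k+1}-M^{k},\hat M^i\rangle}{d\hat V},
\]
and Lemma \ref{ZinL2} (with $\phi=1$) provides a constant $C>0$ such that
\[
\mathbbm{E}\Bigl[\textstyle\int_0^T\Bigl(\tfrac{d\langle M^{k+1}-M^{k},\hat M^i\rangle}{d\hat V}(r)\Bigr)^2 d\hat V_r\Bigr]\;\le\; C\,\mathbbm{E}[\langle M^{k+1}-M^{k}\rangle_T].
\]
Combined with the geometric bound above, this gives
\[
\sum_{k\ge 0}\Bigl\|\tfrac{d\langle M^{k+1}-M^{k},\hat M^i\rangle}{d\hat V}\Bigr\|_{L^2(d\hat V\otimes d\mathbbm{P})}<\infty,
\]
and the very same Minkowski/telescoping argument, together with the $L^2(d\hat V\otimes d\mathbbm{P})$-convergence of $\tfrac{d\langle M^{k},\hat M^i\rangle}{d\hat V}$ towards $\tfrac{d\langle M,\hat M^i\rangle}{d\hat V}$ (again using Lemma \ref{ZinL2} applied to $M^k-M$), yields the desired a.e.\ convergence. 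The only delicate point is identifying the a.e.\ limit with $Y$ (resp.\ with the Radon--Nikodym derivative of $\langle M,\hat M^i\rangle$), but this is resolved exactly by extracting the a.e.\ convergent subsequence coming from $L^2$ convergence and invoking uniqueness of limits.
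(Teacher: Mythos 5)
Your proof is correct and follows essentially the same route as the paper: geometric decay from the contraction of $\Phi$, summability of the increments, $d\hat V\otimes d\mathbbm{P}$-a.e.\ convergence, and identification of the limit via the already-known $L^2(d\hat V\otimes d\mathbbm{P})$ and $\mathcal{H}^2_0$ convergences together with Lemma \ref{ZinL2} and Proposition \ref{linearity}. The only (cosmetic) difference is that you sum the $L^2$-norms of the increments and invoke Minkowski, whereas the paper sums the squared norms and applies Fubini; your variant is, if anything, slightly tighter, since absolute summability of $\sum_k|Y^{k+1}-Y^k|$ directly yields pointwise convergence of the telescoping series.
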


\begin{proof}
	
	For any $i\in[\![ 1;d]\!]$ and $k\in\mathbbm{N}$ we set $Z^{i,k}:=\frac{d\langle M^{k},\hat{M}^i\rangle}{d\hat V}$ and $Z^{i}:=\frac{d\langle M,\hat{M}^i\rangle}{d\hat V}$.
	By Proposition \ref{ProofContraction},  
	there exists $\lambda>0$ such that for any $k\in\mathbbm{N}^*$
	\begin{equation*}
	\begin{array}{rl}
	&\mathbbm{E}\left[\int_0^Te^{-\lambda \hat V_r}|Y^{k+1}_r-Y^{k}_r|^2d\hat V_r + \int_0^Te^{-\lambda \hat V_r}d\langle M^{k+1}-M^{k}\rangle_r\right]\\
	\leq &\frac{1}{2}\mathbbm{E}\left[\int_0^Te^{-\lambda \hat V_r}|Y^{k}_r-Y^{k-1}_r|^2d\hat V_r + \int_0^Te^{-\lambda \hat V_r}d\langle M^{k}-M^{k-1}\rangle_r\right],
	\end{array}
	\end{equation*}
        consequently
	\begin{equation} \label{E514}
  \begin{array}{rl}
	&\underset{k\geq 0}{\sum}\mathbbm{E}\left[\int_0^Te^{-\lambda \hat V_r}|Y^{k+1}_r-Y^{k}_r|^2d\hat V_r\right] + \mathbbm{E}\left[\int_0^Te^{-\lambda \hat V_r}d\langle M^{k+1}-M^{k}\rangle_r\right]\\
	\leq& \underset{k\geq 0}{\sum}\frac{1}{2^k}\left(\mathbbm{E}\left[\int_0^Te^{-\lambda \hat V_r}|Y^{1 }_r|^2d\hat V_r\right] + \mathbbm{E}\left[\int_0^Te^{-\lambda \hat V_r}d\langle M^{1}\rangle_r\right]\right)
	< \infty.
	\end{array}
	\end{equation}
	For every fixed $(i,k)$, we have
	\\
	$Z^{i,k+1}_r-Z^{i,k}_r=\frac{d\langle M^{k+1}-M^{k},\hat{M}^i\rangle}{d\hat V}$. Therefore combining equation \eqref{E514} and Lemma \ref{ZinL2}, we get 
	\\
	$\underset{k\geq 0}{\sum}\left(\mathbbm{E}\left[\int_0^Te^{-\lambda \hat V_r}|Y^{k+1}_r-Y^{k}_r|^2d\hat V_r\right] + \underset{i=1}{\overset{d}{\sum}}\mathbbm{E}\left[\int_0^Te^{-\lambda \hat V_r}|Z^{i,k+1}_r-Z^{i,k}_r|^2d\hat V_r\right]\right)<\infty$.
	So by Fubini's theorem we have 
	\\
	$\mathbbm{E}\left[\int_0^Te^{-\lambda \hat V_r}\left(\underset{k\geq 0}{\sum}\left(|Y^{k+1}_r-Y^{k}_r|^2+\underset{i=1}{\overset{d}{\sum}}|Z^{i,k+1}_r-Z^{i,k}_r|^2\right)\right)d\hat V_r\right]<\infty$.
	\\
	\\
	Consequently  the sum 
	$\underset{k\geq 0}{\sum}\left(|Y^{k+1}_r(\omega)-Y^{k}_r(\omega)|^2 + \underset{i=1}{\overset{d}{\sum}}|Z_r^{i,k+1}(\omega)-Z^{i,k}_r(\omega)|^2\right)$
	is finite on a set of full $d\hat V\otimes d\mathbbm{P}$-measure. So on this set, the sequence $(Y^{k}_t(\omega),(Z^{i,k}_t(\omega))_{i\in[\![ 1;d]\!]})$ converges, and the limit is necessarily equal to $(Y_t(\omega),(Z^i_t(\omega))_{i\in[\![ 1;d]\!]}) \quad d\hat V\otimes d\mathbbm{P}$ a.e. Indeed, as we have mentioned in the lines before the statement of the
	present Proposition \ref{cvdt}, we already know that $Y^k$
converges to $Y$ in $L^2(d\hat V\otimes d\mathbbm{P})$. Since by Lemma
 \ref{ZinL2}, $\mathbbm{E}\left[\int_0^Te^{-\lambda \hat V_r}|Z^{i,k}_r-Z^{i}_r|^2d\hat V_r\right]\leq C\mathbbm{E}\left[\int_0^Te^{-\lambda \hat V_r}d\langle M^{k}-M\rangle_r\right]$, for every $(i,k)$, 
	where $C$ is a positive constant which does not depend on $(i,k)$, 
the convergence of $M^k$ to $M$ in $\mathcal{H}^2_0$ also implies the convergence of $Z^{i,k}$ to $Z^i$ in $L^2(d\hat V\otimes d\mathbbm{P})$.
	
\end{proof}

 \section{Proof of Theorem \ref{Defuv}}\label{B}

\begin{lemma} \label{L41}
Let $\tilde{f}\in\mathcal{L}^2_X$.
For every
$(s,x)\in[0,T]\times E$, 
let $(\tilde Y^{s,x},\tilde M^{s,x})$ be the unique (by Theorem \ref{uniquenessBSDE} and Remark \ref{BSDESmallInt}) solution of 
\begin{equation}\label{FBSDEftildeBis}
\tilde Y^{s,x}_{\cdot} = g(X_T) + \int_{\cdot}^T \mathds{1}_{[s,T]}(r)\tilde f\left(r,X_r\right)dV_r  -(\tilde M^{s,x}_T - \tilde M^{s,x}_{\cdot})
\end{equation}
in $\left(\Omega,\mathcal{F}^{s,x},(\mathcal{F}^{s,x}_t)_{t\in[0,T]},\mathbbm{P}^{s,x}\right)$.
  Then there exist $\tilde u\in\mathcal{D}(\mathfrak{a})$

 such that for any $(s,x)\in[0,T]\times E$

\begin{equation*}
\left\{\begin{array}{rcl}
     \forall t\in [s,T]: \tilde Y^{s,x}_t &=& \tilde u(t,X_t)\quad \mathbbm{P}^{s,x}\text{a.s.}  \\
     \tilde M^{s,x}&=&M[\tilde u]^{s,x}
\end{array}\right.
\end{equation*}
and in particular $\frac{d\langle \tilde M^{s,x},M[\psi]^{s,x}\rangle}{dV}=\mathfrak{G}^{\psi}(\tilde u)(\cdot,X_{\cdot})$  $dV\otimes d\mathbbm{P}^{s,x}$  a.e. on $[s,T]$.

\end{lemma}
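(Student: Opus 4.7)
The plan is to define $\tilde u$ directly via a Markovian conditional expectation, then check that it lies in $\mathcal{D}(\mathfrak{a})$ by using the existing cadlag decomposition coming from the BSDE.

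First, I would set
\begin{equation*}
\tilde u(s,x) := \mathbbm{E}^{s,x}\!\left[g(X_T)+\int_s^T \tilde f(r,X_r)dV_r\right]
= P_{s,T}[g](x) + \int_s^T P_{s,r}[\tilde f(r,\cdot)](x)\,dV_r,
\end{equation*}
for $(s,x)\in[0,T]\times E$. Thanks to $H^{mom}(\zeta,\eta)$ and the assumption $\tilde f\in\mathcal{L}^2_X$, the expectation is well-defined (in fact square integrable), and the Borel measurability of $\tilde u$ follows from the two measurability statements of Remark \ref{R63}. Next, because the driver of \eqref{FBSDEftildeBis} does not depend on $(Y,Z)$, taking conditional expectations in the BSDE on $[s,T]$ yields, for every $t\in[s,T]$,
\begin{equation*}
\tilde Y^{s,x}_t = \mathbbm{E}^{s,x}\!\left[g(X_T)+\int_t^T \tilde f(r,X_r)dV_r\,\Big|\,\mathcal{F}_t\right],
\end{equation*}
and the Markov property (applied to the inner conditional expectation at time $t$) identifies the right-hand side with $\tilde u(t,X_t)$ $\mathbbm{P}^{s,x}$-a.s. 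This establishes the first displayed equality of the Lemma.

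Second, I would verify that $\tilde u\in\mathcal{D}(\mathfrak{a})$ with candidate $\chi=-\tilde f\in\mathcal{L}^0_X$. Fix $(s,x)$. Since $X_s=x$ $\mathbbm{P}^{s,x}$-a.s., we have $\tilde u(s,x)=\tilde Y^{s,x}_s$, and rewriting \eqref{FBSDEftildeBis} between $s$ and $t\in[s,T]$ gives
\begin{equation*}
\tilde Y^{s,x}_t - \tilde u(s,x) + \int_s^t \tilde f(r,X_r)dV_r = \tilde M^{s,x}_t.
\end{equation*}
Since $\tilde M^{s,x}\in\mathcal{H}^2_0$ and $\tilde Y^{s,x}$ is a cadlag modification of $\tilde u(\cdot,X_\cdot)$ on $[s,T]$ by the first part, the process $\mathds{1}_{[s,T]}(\tilde u(\cdot,X_\cdot)-\tilde u(s,x)+\int_s^{\cdot}\tilde f(r,X_r)dV_r)$ admits the cadlag modification $\tilde M^{s,x}\in\mathcal{H}^2_0$. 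This is exactly Definition \ref{domainextended} for $\chi=-\tilde f$, so $\tilde u\in\mathcal{D}(\mathfrak{a})$ and, by construction and the unicity in Proposition \ref{uniquenesspsi}, $\mathfrak{a}(\tilde u)=-\tilde f$ together with $M[\tilde u]^{s,x}=\tilde M^{s,x}$ up to $\mathbbm{P}^{s,x}$-indistinguishability.

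Finally, the angular-bracket assertion is automatic: Proposition \ref{P321}, applied to $\tilde u\in\mathcal{D}(\mathfrak{a})$, gives $\langle M[\tilde u]^{s,x},M[\psi]^{s,x}\rangle=\int_s^{\cdot\vee s}\mathfrak{G}^\psi(\tilde u)(r,X_r)dV_r$, hence $\frac{d\langle \tilde M^{s,x},M[\psi]^{s,x}\rangle}{dV}=\mathfrak{G}^\psi(\tilde u)(\cdot,X_\cdot)$ $dV\otimes d\mathbbm{P}^{s,x}$-a.e. on $[s,T]$. The only real subtlety is that $\tilde u$ must be the \emph{same} Borel function for every $(s,x)$; this is guaranteed by the Markov property expressed in terms of the transition kernel (which is measurable in time), so the representation $\tilde u=P_{\cdot,T}[g]+\int_{\cdot}^T P_{\cdot,r}[\tilde f(r,\cdot)]dV_r$ depends only on the current position, not on the starting data $(s,x)$. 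I expect this measurability/consistency issue to be the main (albeit mild) obstacle, and everything else to be a direct assembly of Proposition \ref{BSDEexpectations}, Definition \ref{domainextended}, and Proposition \ref{P321}.
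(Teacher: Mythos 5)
Your proposal is correct and follows essentially the same route as the paper's proof: define $\tilde u$ as the conditional expectation $\mathbbm{E}^{s,x}[g(X_T)+\int_s^T\tilde f(r,X_r)dV_r]$, identify $\tilde Y^{s,x}_t$ with $\tilde u(t,X_t)$ via the Markov property, and then read off membership in $\mathcal{D}(\mathfrak{a})$ with $\mathfrak{a}(\tilde u)=-\tilde f$ from the BSDE decomposition, concluding the bracket identity from Proposition \ref{P321}. The measurability point you flag at the end is handled in the paper exactly as you suggest, via the kernel representation (Remark \ref{R63}).
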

\begin{proof}
We set  $\tilde u:(s,x)\mapsto \mathbbm{E}^{s,x}\left[g(X_T) + \int_s^T \tilde{f}\left(r,X_r\right)dV_r\right]$ 
which is Borel by Proposition A.10  and Lemma A.11 in \cite{paper2}. Therefore 
by the Markov property  (see e.g. (3.4) in \cite{paperAF}),
 for every fixed $t \in[s,T]$ we have 
$\mathbb {P}^{s,x}$- a.s. 
\begin{equation*}
\begin{array}{rcccl}
    \tilde u(t,X_t) &=& \mathbbm{E}^{t,X_t}\left[g(X_T) + \int_t^T \tilde{f}\left(r,X_r\right)dV_r\right]
     &=& \mathbbm{E}^{s,x}\left[g(X_T) + \int_t^T \tilde{f}\left(r,X_r\right)dV_r\middle|\mathcal{F}_t\right]\\
     &=& \mathbbm{E}^{s,x}\left[\tilde Y^{s,x}_t+(\tilde M^{s,x}_T-\tilde M^{s,x}_t)|\mathcal{F}_t\right]
     &=& \tilde Y^{s,x}_t.
\end{array}
\end{equation*}
By \eqref{FBSDEftildeBis} we have $d\tilde Y^{s,x}_t=-\tilde{f}(t,X_t)dV_t+d\tilde M^{s,x}_t$, so for every  fixed $t \in[s,T]$,   $\tilde u(t,X_t)= \tilde u(s,x)-\int_s^t\tilde{f}(r,X_r)dV_r -\tilde M^{s,x}_t$ $\mathbb {P}^{s,x}$- a.s.  Since $\tilde M^{s,x}$ is square integrable and since  previous relation holds for any $(s,x)$ and $t$, Definition \ref{extended} implies that $\tilde u\in\mathcal{D}(\mathfrak{a})$, $\mathfrak{a}(\tilde u)=-\tilde{f}$ and $\tilde M^{s,x}=M[\tilde u]^{s,x}$ for every $(s,x)$, hence the announced results.
\end{proof}

\begin{notation}
For a fixed $(s,x)\in [0,T]\times E$, we will denote by  $(Y^{k,s,x},M^{k,s,x})_{k\in\mathbbm{N}}$ the Picard iterations associated to  $BSDE^{s,x}(f,g)$.
\end{notation}

\begin{proposition} \label{P511}
	For each $k\in\mathbbm{N}$, there exists  $u_k\in\mathcal{D}(\mathfrak{a})$, such that for every $(s,x)\in[0,T]\times E$
	\begin{equation}\label{defuk}
	\left\{\begin{array}{rcl}
	\forall t\in [s,T]: Y^{k,s,x}_t &=& u_k(t,X_t) \quad \mathbbm{P}^{s,x}a.s.  \\
	M^{k,s,x}&=&M[u_k]^{s,x}
	\end{array}\right.
	\end{equation}
\end{proposition}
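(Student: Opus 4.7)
The plan is to proceed by induction on $k$, the inductive step being essentially a direct application of Lemma \ref{L41}.

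For the base case $k=0$, Notation \ref{DefPicard} gives $(Y^{0,s,x}, M^{0,s,x}) = (0,0)$ for every $(s,x)$, so we may set $u_0 \equiv 0$, which clearly lies in $\mathcal{D}(a) \subset \mathcal{D}(\mathfrak{a})$ and satisfies \eqref{defuk} trivially (with $M[u_0]^{s,x} = 0$).

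For the inductive step, assume $u_k \in \mathcal{D}(\mathfrak{a})$ has been constructed satisfying \eqref{defuk}. Fix $(s,x)$; by Notation \ref{DefPicard}, $(Y^{k+1,s,x}, M^{k+1,s,x})$ is the unique solution of
\begin{equation*}
Y^{k+1,s,x}_{\cdot} = g(X_T) + \int_{\cdot}^T f\!\left(r,X_r,Y^{k,s,x}_r,\tfrac{d\langle M^{k,s,x}, M[\psi]^{s,x}\rangle}{dV}(r)\right) dV_r - (M^{k+1,s,x}_T - M^{k+1,s,x}_{\cdot}).
\end{equation*}
By the induction hypothesis, $Y^{k,s,x}$ is a $\mathbbm{P}^{s,x}$-modification of $u_k(\cdot, X_\cdot)$ on $[s,T]$, hence (by Lemma \ref{ModifImpliesdV}) they agree $dV\otimes d\mathbbm{P}^{s,x}$ a.e. Likewise, $M^{k,s,x} = M[u_k]^{s,x}$ together with Proposition \ref{P321} gives $\tfrac{d\langle M^{k,s,x}, M[\psi_i]^{s,x}\rangle}{dV} = \mathfrak{G}^{\psi_i}(u_k)(\cdot, X_\cdot)$ $dV\otimes d\mathbbm{P}^{s,x}$ a.e. Setting $\tilde f(r,y) := f(r,y,u_k(r,y),\mathfrak{G}^{\psi}(u_k)(r,y))$, the driver above coincides $dV\otimes d\mathbbm{P}^{s,x}$ a.e.\ with $\tilde f(r,X_r)$, so $(Y^{k+1,s,x}, M^{k+1,s,x})$ actually solves the BSDE of Lemma \ref{L41} for this $\tilde f$.

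The key verification is that $\tilde f \in \mathcal{L}^2_X$, independently of $(s,x)$. First, since $Y^{k,s,x} \in \mathcal{L}^2(dV\otimes d\mathbbm{P}^{s,x})$ for every $(s,x)$ (as it is a component of the solution produced by the contraction $\Phi$), Lemma \ref{ModifImpliesdV} and Definition \ref{topo} yield $u_k \in \mathcal{L}^2_X$. Second, since $M[u_k]^{s,x} = M^{k,s,x} \in \mathcal{H}^2_0$, Lemma \ref{ZinL2} gives $\tfrac{d\langle M^{k,s,x}, M[\psi_i]^{s,x}\rangle}{dV} \in \mathcal{L}^2(dV\otimes d\mathbbm{P}^{s,x})$, which via Proposition \ref{P321} means $\mathfrak{G}^{\psi_i}(u_k) \in \mathcal{L}^2_X$ for each $i$. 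Lemma \ref{LemmaMild}, together with $H^{lip}(\zeta,\eta)$ and $H^{mom}(\zeta,\eta)$, then ensures $\tilde f \in \mathcal{L}^2_X$.

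Having established this, Lemma \ref{L41} applied to $\tilde f$ produces a function $u_{k+1} \in \mathcal{D}(\mathfrak{a})$ such that for every $(s,x)$, $Y^{k+1,s,x}_t = u_{k+1}(t,X_t)$ on $[s,T]$ $\mathbbm{P}^{s,x}$ a.s.\ and $M^{k+1,s,x} = M[u_{k+1}]^{s,x}$, closing the induction. The only non-routine part is the $\mathcal{L}^2_X$ check for $\tilde f$ above: it is the place where the induction hypothesis on $M^{k,s,x} = M[u_k]^{s,x}$ is crucially combined with Proposition \ref{P321} and Lemma \ref{ZinL2} to transfer integrability from the angular bracket to the function $\mathfrak{G}^{\psi}(u_k)$ uniformly in $(s,x)$; once this is done, the conclusion is immediate from Lemma \ref{L41}.
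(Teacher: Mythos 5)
Your proposal is correct and follows essentially the same route as the paper's proof: induction on $k$ with $u_0\equiv 0$, use of Lemma \ref{ModifImpliesdV} and Proposition \ref{P321} to rewrite the Picard driver as $\tilde f(\cdot,X_\cdot)$ with $\tilde f=f(\cdot,\cdot,u_k,\mathfrak{G}^{\psi}(u_k))$, the $\mathcal{L}^2_X$ check via Lemma \ref{ZinL2} and the contraction property of $\Phi$, and then Lemma \ref{L41} to produce $u_{k+1}$. The only cosmetic difference is that you invoke Lemma \ref{LemmaMild} for the integrability of $\tilde f$ where the paper argues directly from the growth and Lipschitz conditions, which amounts to the same computation.
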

\begin{remark} \label{R511}
In particular, \eqref{defuk} implies that
 $\frac{d\langle M^{k,s,x},M[\psi]^{s,x}\rangle}{dV}=\mathfrak{G}^{\psi}(u_k)(\cdot,X_{\cdot})$  $dV\otimes d\mathbbm{P}^{s,x}$  a.e. on $[s,T]$.
\end{remark}

\begin{proof}
	We proceed by induction on $k$.
	It is clear that $u_0=0$ satisfies the assertion for $k=0$. 

	Now let us assume that the function $u_{k-1}$ exists, for some integer 
	$k \ge 1$, 
	satisfying \eqref{defuk} 
and in particular Remark \ref{R511},
for $k$ replaced with $k-1$.
	
	We fix $(s,x)\in[0,T]\times E$. By Lemma \ref{ModifImpliesdV},
	$(Y^{k-1,s,x},\frac{d\langle M^{k-1,s,x},M[\psi]^{s,x}\rangle}{dV})=(u_{k-1},\mathfrak{G}^{\psi}(u_{k-1}))(\cdot,X_{\cdot})$ $dV\otimes \mathbbm{P}^{s,x}$ a.e. on $[s,T]$. Therefore by \eqref{defYk}, on $[s,T]$
	\\
	$Y^{k,s,x} = g(X_T) + \int_{\cdot}^T f\left(r,X_r,u_{k-1}(r,X_r),\mathfrak{G}^{\psi}(u_{k-1})(r,X_r)\right)dV_r  -(M^{k,s,x}_T - M^{k,s,x}_{\cdot})$.
	
	Since $\Phi^{s,x}$ maps $L^2(dV\otimes d\mathbbm{P}^{s,x})\times \mathcal{H}^2_0$ into itself (see Definition \ref{contraction}), obviously all 
	the Picard iterations 
	belong to  
	$L^2(dV\otimes d\mathbbm{P}^{s,x})\times \mathcal{H}^2_0$. 
	In particular, by Lemma \ref{ZinL2}
	$Y^{k-1,s,x}$ and for every $i\in[\![ 1;d]\!]$,  $\frac{d\langle M^{k-1,s,x},M[\psi_i]^{s,x}\rangle}{dV}$  belong to
	\\
	$\mathcal{L}^2(dV\otimes d\mathbbm{P}^{s,x})$. So, 
	by recurrence assumption 
	on  $ u_{k-1}$, it follows that
	$u_{k-1}$ and for any $i\in[\![ 1;d]\!]$, $\mathfrak{G}^{\psi_i}(u_{k-1})$ belong to $\mathcal{L}^2_X$.

	Combining $H^{mom}(\zeta,\eta)$ and the growth condition of $f$
(item 3.)
 in $H^{lip}(\zeta,\eta)$ (see Hypotheses \ref{HypMom} and \ref{Hpq}),
one shows that $f(\cdot,\cdot,0,0)$ also belongs to $\mathcal{L}^2_X$.
	Therefore thanks to the Lipschitz conditions on $f$ assumed in $H^{lip}(\zeta,\eta)$, we have $f(\cdot,\cdot,u_{k-1},\mathfrak{G}^{\psi}(u_{k-1})) \in \mathcal{L}^2_X$.
	
	The existence of $u_k$  now comes from Lemma \ref{L41} applied to $\tilde f:=f(\cdot,\cdot,u_{k-1},\mathfrak{G}^{\psi}(u_{k-1}))$, which establishes the induction step
	for a general $k$ and allows to conclude the proof.
	
\end{proof}

\begin{prooff}\\ of Theorem \ref{Defuv}.
We  set $\bar{u}:=\underset{k\in\mathbbm{N}}{\text{limsup }}u_k$,
 in the sense that for any $(s,x)\in[0,T]\times E$, 
${\bar u}(s,x)= \underset{k\in\mathbbm{N}}{\text{limsup }}u_k(s,x)$ and $v:=\underset{k\in\mathbbm{N}}{\text{limsup }}v_k$. $\bar{u}$ and $v$ are Borel functions. Let us fix now  $(s,x)\in[0,T]\times E$.
 We know by Propositions \ref{P511}, \ref{cvdt} and Lemma \ref{ModifImpliesdV} that 
\begin{equation*}
\left\{\begin{array}{rcl}
     u_k(\cdot,X_{\cdot})&\underset{k\rightarrow\infty}{\longrightarrow}& Y^{s,x}  \quad dV\otimes d\mathbbm{P}^{s,x}\text{ a.e. on }[s,T]\\
     \mathfrak{G}^{\psi}(u_{k})(\cdot,X_{\cdot})&\underset{k\rightarrow\infty}{\longrightarrow}& Z^{s,x}  \quad dV\otimes d\mathbbm{P}^{s,x}\text{ a.e. on }[s,T],
\end{array}\right.
\end{equation*}
where $Z^{s,x} := \frac{d\langle M^{s,x},M[\psi]^{s,x}\rangle}{dV}$.
Therefore, and on the subset of $[s,T]\times E$ of full $dV\otimes d\mathbbm{P}^{s,x}$-measure on which these convergences hold, we have 
\begin{equation}\label{E420}
\left\{\begin{array}{rcccccl}
     \bar{u}(t,X_t(\omega))&=&\underset{k\in\mathbbm{N}}{\text{limsup }}u_k(t,X_t(\omega))&=&\underset{k\in\mathbbm{N}}{\text{lim }}u_k(t,X_t(\omega)) &=& Y^{s,x}_t(\omega)  \\
     v(t,X_t(\omega))&=&\underset{k\in\mathbbm{N}}{\text{limsup }}\mathfrak{G}^{\psi}(u_{k})(t,X_t(\omega))&=&\underset{k\in\mathbbm{N}}{\text{lim  }}\mathfrak{G}^{\psi}(u_{k})(t,X_t(\omega)) &=& Z^{s,x}_t(\omega).
\end{array}\right.
\end{equation}

 Thanks to the $dV\otimes d\mathbbm{P}^{s,x}$ equalities concerning $v$ and $\bar{u}$ stated in 
\eqref{E420}, under $\mathbbm{P}^{s,x}$ we actually have
\begin{equation} \label{E421}
Y^{s,x} = g(X_T) + \int_{\cdot}^T f\left(r,X_r,\bar{u}(r,X_r),v(r,X_r)\right)dV_r  -(M^{s,x}_T - M^{s,x}_{\cdot}).
\end{equation}
Now \eqref{E421} can be considered as a BSDE where the driver does
not depend on $y$ and $z$.
Since $Y^{s,x}$ and $Z^{s,x}$ belong to $\mathcal{L}^2(dV\otimes d\mathbbm{P}^{s,x})$ 
(see Lemma \ref{ZinL2}), then by \eqref{E420}, so do $\bar{u}(\cdot,X_{\cdot})\mathds{1}_{[s,T]}$ and $v(\cdot,X_{\cdot})\mathds{1}_{[s,T]}$, 
meaning that $\bar{u}$ and $v$ belong to $\mathcal{L}^2_X$. Combining $H^{mom}(\zeta,\eta)$ and the Lipschitz condition on $f$ assumed in $H^{lip}(\zeta,\eta)$,  $f(\cdot,\cdot,\bar{u},v)$ is also proved to
belong to $\mathcal{L}^2_X$. We can therefore apply Lemma \ref{L41} to 
$\tilde{f}:=f(\cdot,\cdot,\bar{u},v)$, and conclude the proof
of the first part of the theorem.

Concerning the last statement of the
Theorem \ref{Defuv}, for any $(s,x)\in[0,T]\times E$, we have  $Y^{s,x}_s=u(s,X_s)=u(s,x)$ $\mathbbm{P}^{s,x}$ a.s. so $Y^{s,x}_s$ is $\mathbbm{P}^{s,x}$ a.s. equal to a constant and $u$ is the mapping $(s,x)\longmapsto Y^{s,x}_s$.
\end{prooff}

\end{appendices}
{\bf ACKNOWLEDGMENTS.}
The authors thank the referees for their stimulating
comments which has permitted us to increase the quality
of the paper.
The authors are also grateful to Andrea Cosso
for stimulating discussions. The research of the first named author
was provided by a PhD fellowship (AMX) of the Ecole Polytechnique. 
The work of the second named author was
partially supported by a public grant as part of the Investissement d’avenir
project, reference ANR-11-LABX-0056-LMH, LabEx LMH, in a joint call with
Gaspard Monge Program for optimization, operations research and their
interactions with data sciences.

\bibliographystyle{plain}

\bibliography{../../../../biblioPhDBarrasso_bib/biblioPhDBarrasso}

\def\polhk#1{\setbox0=\hbox{#1}{\ooalign{\hidewidth
  \lower1.5ex\hbox{`}\hidewidth\crcr\unhbox0}}}
  \def\polhk#1{\setbox0=\hbox{#1}{\ooalign{\hidewidth
  \lower1.5ex\hbox{`}\hidewidth\crcr\unhbox0}}} \def\cprime{$'$}
  \def\polhk#1{\setbox0=\hbox{#1}{\ooalign{\hidewidth
  \lower1.5ex\hbox{`}\hidewidth\crcr\unhbox0}}}
\begin{thebibliography}{10}

\bibitem{aliprantis}
C.~D. Aliprantis and K.~C. Border.
\newblock {\em Infinite dimensional analysis. A hitchhiker's guide}.
\newblock Springer-Verlag, Berlin, second edition, 1999.

\bibitem{BSDEmildPardouxBally}
V.~Bally, E.~Pardoux, and L.~Stoica.
\newblock Backward stochastic differential equations associated to a symmetric
  {M}arkov process.
\newblock {\em Potential Anal.}, 22(1):17--60, 2005.

\bibitem{BandiniBSDE}
E.~Bandini.
\newblock Existence and uniqueness for backward stochastic differential
  equations driven by a random measure.
\newblock {\em Electronic Communications in Probability}, 20(71):1--13, 2015.

\bibitem{barles1997backward}
G.~Barles, R.~Buckdahn, and E.~Pardoux.
\newblock Backward stochastic differential equations and integral-partial
  differential equations.
\newblock {\em Stochastics: An International Journal of Probability and
  Stochastic Processes}, 60(1-2):57--83, 1997.

\bibitem{barles1997sde}
G.~Barles and E.~Lesigne.
\newblock S{DE}, {BSDE} and {PDE}.
\newblock In {\em Backward stochastic differential equations ({P}aris,
  1995--1996)}, volume 364 of {\em Pitman Res. Notes Math. Ser.}, pages 47--80.
  Longman, Harlow, 1997.

\bibitem{paper1preprint}
A.~Barrasso and F.~Russo.
\newblock {Backward Stochastic Differential Equations with no driving
  martingale, Markov processes and associated Pseudo Partial Differential
  Equations}.
\newblock Preprint, December 2017.

\bibitem{paperAF}
A.~Barrasso and F.~Russo.
\newblock A note on time-dependent additive functionals.
\newblock {\em Communications on Stochastic Analysis}, 11 no 3:313--334, 9
  2017.

\bibitem{paper2}
A.~Barrasso and F.~Russo.
\newblock {BSDEs with no driving martingale, Markov processes and associated
  Pseudo Partial Differential Equations. Part II: Decoupled mild solutions and
  Examples.}
\newblock Preprint, 2020.

\bibitem{PseudoInverse}
A.~Ben-Israel and Th. N.~E. Greville.
\newblock {\em Generalized inverses}, volume~15 of {\em CMS Books in
  Mathematics/Ouvrages de Math\'ematiques de la SMC}.
\newblock Springer-Verlag, New York, second edition, 2003.
\newblock Theory and applications.

\bibitem{Buckdahn93}
R.~Buckdahn.
\newblock Backward stochastic differential equations driven by a martingale.
\newblock {\em Unpublished}, 1993.

\bibitem{cannizzaro2015}
G.~Cannizzaro and K.~Chouk.
\newblock Multidimensional sdes with singular drift and universal construction
  of the polymer measure with white noise potential.
\newblock {\em The Annals of Probability}, 46(3):1710--1763, 2018.

\bibitem{sant}
R.~Carbone, B.~Ferrario, and M.~Santacroce.
\newblock Backward stochastic differential equations driven by c\`adl\`ag
  martingales.
\newblock {\em Teor. Veroyatn. Primen.}, 52(2):375--385, 2007.

\bibitem{Confortola}
F.~Confortola, M.~Fuhrman, and J.~Jacod.
\newblock Backward stochastic differential equation driven by a marked point
  process: an elementary approach with an application to optimal control.
\newblock {\em Ann. Appl. Probab.}, 26(3):1743--1773, 2016.

\bibitem{diel}
F.~Delarue and R.~Diel.
\newblock Rough paths and 1d {SDE} with a time dependent distributional drift:
  application to polymers.
\newblock {\em Probab. Theory Related Fields}, 165(1-2):1--63, 2016.

\bibitem{dellmeyerB}
C.~Dellacherie and P.-A. Meyer.
\newblock {\em Probabilit\'es et potentiel. {C}hapitres {V} \`a {VIII}}, volume
  1385 of {\em Actualit\'es Scientifiques et Industrielles [Current Scientific
  and Industrial Topics]}.
\newblock Hermann, Paris, revised edition, 1980.
\newblock Th{\'e}orie des martingales. [Martingale theory].

\bibitem{el1997backward}
N.~El~Karoui, S.~Peng, and M.~C. Quenez.
\newblock Backward stochastic differential equations in finance.
\newblock {\em Mathematical finance}, 7(1):1--71, 1997.

\bibitem{issoglio}
F.~Flandoli, E.~Issoglio, and F.~Russo.
\newblock Multidimensional stochastic differential equations with
  distributional drift.
\newblock {\em Trans. Amer. Math. Soc.}, 369(3):1665--1688, 2017.

\bibitem{frw1}
F.~Flandoli, F.~Russo, and J.~Wolf.
\newblock Some {SDE}s with distributional drift. {I}. {G}eneral calculus.
\newblock {\em Osaka J. Math.}, 40(2):493--542, 2003.

\bibitem{frw2}
F.~Flandoli, F.~Russo, and J.~Wolf.
\newblock Some {SDE}s with distributional drift. {II}. {L}yons-{Z}heng
  structure, {I}t\^o's formula and semimartingale characterization.
\newblock {\em Random Oper. Stochastic Equations}, 12(2):145--184, 2004.

\bibitem{fuhrman2005generalized}
M.~Fuhrman and G.~Tessitore.
\newblock Generalized directional gradients, backward stochastic differential
  equations and mild solutions of semilinear parabolic equations.
\newblock {\em Appl. Math. Optim.}, 51(3):279--332, 2005.

\bibitem{issoglio_jing16}
E.~Issoglio and S.~Jing.
\newblock Forward-backward {SDE}s with distributional coefficients.
\newblock {\em Stochastic Process. Appl.}, 130(1):47--78, 2020.

\bibitem{kobylanski2000backward}
M.~Kobylanski.
\newblock Backward stochastic differential equations and partial differential
  equations with quadratic growth.
\newblock {\em Ann. Probab.}, 28(2):558--602, 2000.

\bibitem{laachir}
I.~Laachir and F.~Russo.
\newblock B{SDE}s, c\`adl\`ag martingale problems, and orthogonalization under
  basis risk.
\newblock {\em SIAM J. Financial Math.}, 7(1):308--356, 2016.

\bibitem{parpen90}
{\'E}.~Pardoux and S.~Peng.
\newblock Adapted solution of a backward stochastic differential equation.
\newblock {\em Systems Control Lett.}, 14(1):55--61, 1990.

\bibitem{pardoux1992backward}
{\'E}.~Pardoux and S.~Peng.
\newblock Backward stochastic differential equations and quasilinear parabolic
  partial differential equations.
\newblock In {\em Stochastic partial differential equations and their
  applications ({C}harlotte, {NC}, 1991)}, volume 176 of {\em Lecture Notes in
  Control and Inform. Sci.}, pages 200--217. Springer, Berlin, 1992.

\bibitem{peng1991probabilistic}
S.~Peng.
\newblock Probabilistic interpretation for systems of quasilinear parabolic
  partial differential equations.
\newblock {\em Stochastics Stochastics Rep.}, 37(1-2):61--74, 1991.

\bibitem{rozkosz}
A.~Rozkosz.
\newblock Weak convergence of diffusions corresponding to divergence form
  operators.
\newblock {\em Stochastics Stochastics Rep.}, 57(1-2):129--157, 1996.

\bibitem{russo_trutnau07}
F.~Russo and G.~Trutnau.
\newblock Some parabolic {PDE}s whose drift is an irregular random noise in
  space.
\newblock {\em Ann. Probab.}, 35(6):2213--2262, 2007.

\bibitem{wurzer}
F.~Russo and L.~Wurzer.
\newblock Elliptic {PDE}s with distributional drift and backward {SDE}s driven
  by a c\`adl\`ag martingale with random terminal time.
\newblock {\em Stoch. Dyn.}, 17(4):1750030, 36, 2017.

\bibitem{stroock1975diffusion}
D.~W. Stroock.
\newblock Diffusion processes associated with {L}\'evy generators.
\newblock {\em Z. Wahrscheinlichkeitstheorie und Verw. Gebiete},
  32(3):209--244, 1975.

\bibitem{stroock}
D.~W. Stroock and S.~R.~S. Varadhan.
\newblock {\em Multidimensional diffusion processes}.
\newblock Classics in Mathematics. Springer-Verlag, Berlin, 2006.
\newblock Reprint of the 1997 edition.

\end{thebibliography}

\end{document}